\setlist[description]{leftmargin=\parindent,labelindent=\parindent}
\newtheorem{thm}{Theorem}[section]
\newtheorem{prop}[thm]{Proposition}
\newtheorem{lem}[thm]{Lemma}
\newtheorem{cor}[thm]{Corollary}
\newtheorem{conj}[thm]{Conjecture}
\theoremstyle{definition}
\newtheorem{definition}[thm]{Definition}
\newtheorem{rem}[thm]{Remark}
\numberwithin{equation}{section}
\newcommand{\semis}{\mathrm{ss}}
\newcommand{\s}{\mathsf{S}}
\renewcommand{\l}{\mathsf{L}}
\renewcommand{\L}{\mathcal{L}}
\newcommand{\U}{\mathcal{U}}
\newcommand{\Y}{\mathcal{Y}}
\newcommand{\Q}{\mathcal{Q}}
\newcommand{\V}{\mathcal{V}}
\newcommand{\B}{\mathcal{B}}
\newcommand{\zz}{\mathbb{Z}}
\newcommand{\qq}{\mathbb{Q}}
\newcommand{\C}{\mathcal{C}}
\newcommand{\W}{\mathcal{W}}
\renewcommand{\ss}{\mathbb{S}}
\newcommand{\M}{\mathcal{M}}
\newcommand{\p}{\mathbb{P}}
\newcommand{\pp}{\mathbb{P}}
\renewcommand{\H}{\mathcal{H}}
\newcommand{\Xb}{\overline{X}}
\newcommand{\F}{\mathcal{F}}
\renewcommand{\P}{\mathcal{P}}
\DeclareMathOperator{\cl}{cl}
\newcommand{\E}{\mathcal{E}}
\renewcommand{\O}{\mathcal{O}}
\DeclareMathOperator{\ct}{ct}
\renewcommand{\tilde}{\widetilde}
\DeclareMathOperator{\Aut}{Aut}
\DeclareMathOperator{\GL}{GL}
\DeclareMathOperator{\Gr}{Gr}
\DeclareMathOperator{\BSO}{BSO}
\renewcommand{\SO}{\mathrm{SO}}
\DeclareMathOperator{\OG}{OG}
\DeclareMathOperator{\SL}{SL}
\DeclareMathOperator{\Sym}{Sym}
\DeclareMathOperator{\BSL}{BSL}
\DeclareMathOperator{\BPGL}{BPGL}
\DeclareMathOperator{\BGL}{BGL}
\DeclareMathOperator{\Hom}{Hom}
\renewcommand{\gg}{\mathbb{G}}
\newcommand{\lstw}{\mathsf{L}\mathsf{S}_{12}}
\newcommand{\Mb}{\overline{\M}}
\renewcommand{\S}{\mathcal{S}}
\title{Extensions of tautological rings and motivic structures in the cohomology of $\Mb_{g,n}$}
\author{Samir Canning}
\author{Hannah Larson}
\author{Sam Payne}
\thanks{
S.C. was supported by a Hermann-Weyl-Instructorship from the Forschungsinstitut für Mathematik at ETH Z\"urich. This research was partially conducted during the period H.L. served as a Clay Research Fellow and as a guest of the Forschungsinstitut für Mathematik at ETH Z\"urich. S.P. was supported in part by NSF grants DMS--2302475 and DMS--2053261. }
\begin{document}

\begin{abstract}
    We study collections of subrings of $H^*(\Mb_{g,n})$ that are closed under the tautological operations that map cohomology classes on moduli spaces of smaller dimension to those on moduli spaces of larger dimension and contain the tautological subrings. Such extensions of tautological rings are well-suited for inductive arguments and flexible enough for a wide range of applications.  In particular, we confirm predictions of Chenevier and Lannes for the $\ell$-adic Galois representations and Hodge structures that appear in $H^k(\Mb_{g,n})$ for $k = 13$, $14$, and $15$.  We also show that $H^4(\Mb_{g,n})$ is generated by tautological classes for all $g$ and $n$, confirming a prediction of Arbarello and Cornalba from the 1990s.  In order to establish the final base cases needed for the inductive proofs of our main results, we use Mukai's construction of canonically embedded pentagonal curves of genus 7 as linear sections of an orthogonal Grassmannian and a decomposition of the diagonal to show that the pure weight cohomology of $\M_{7,n}$ is generated by algebraic cycle classes, for $n \leq 3$.
\end{abstract}

\maketitle

\section{Introduction}

The moduli spaces of stable curves $\Mb_{g,n}$ are smooth and proper over the integers, and this implies strong restrictions on the motivic structures, such as $\ell$-adic Galois representations,  that can appear in $H^*(\Mb_{g,n})$. Widely believed conjectures regarding analytic continuations and functional equations for $L$-functions lead to precise predictions, by Chenevier and Lannes, about which such structures can appear in degrees less than or equal to 22 \cite[Theorem~F]{ChenevierLannes}. These predictions are consistent with all previously known results on $H^*(\Mb_{g,n})$.  Recent work inspired by these predictions confirms their correctness in all degrees less than or equal to 12 \cite{BergstromFaberPayne, CanningLarsonPayne}. Here we introduce new methods to systematically study the motivic structures in $H^k(\Mb_{g,n})$ for $k > 12$ and confirm these predictions in degrees $13$, $14$, and $15$.

Throughout, we write $H^*(X)$ for the rational singular cohomology of a scheme or Deligne-Mumford stack $X$ endowed with its associated Hodge structure or $\ell$-adic Galois representation and $H^*(X)^{\semis}$ for its semi-simplification. Let $\l := H^2(\p^1)$ and $\s_{12} := H^{11}(\Mb_{1,11})$.

\begin{thm} \label{thm:131415}
    For all $g$ and $n$, we have $H^{13}(\Mb_{g,n})^{\semis} \cong \bigoplus \lstw$ and $H^{14}(\Mb_{g,n})^{\semis} \cong \bigoplus \l^{7}$. Moreover, for $g \geq 2$, we have
$H^{15} (\Mb_{g,n})^{\semis} \cong \bigoplus \l^2 \s_{12}$.

\end{thm}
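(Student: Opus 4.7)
The plan is to apply the inductive framework of extensions of tautological rings developed in the earlier sections of the paper. I would work with an extension $\mathcal{E}^*$ that, in addition to the tautological subring, contains the classes on $\Mb_{1,11}$ realizing $\s_{12}$ in $H^{11}$. Because $\mathcal{E}^*$ is by design closed under boundary gluing and forgetful pushforwards, and the tautological classes are of Tate type, the graded piece $\mathcal{E}^k$ has the predicted semisimplification in the relevant degrees: $\bigoplus \lstw$ in degree $13$, $\bigoplus \l^7$ in degree $14$, and $\bigoplus \l^2 \s_{12}$ in degree $15$. The theorem therefore reduces to showing $\mathcal{E}^k = H^k(\Mb_{g,n})$ in these degrees (under the stated hypotheses on $g$).

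I would establish this by induction on $(g,n)$, using the long exact sequence attached to the open--closed decomposition $\M_{g,n} \hookrightarrow \Mb_{g,n} \hookleftarrow \partial \Mb_{g,n}$. The boundary is handled inductively: each boundary stratum is, up to a finite quotient, a product of moduli spaces of smaller invariants, so the inductive hypothesis together with closure of $\mathcal{E}^*$ under gluing push-forwards forces the image of $H^k(\partial \Mb_{g,n})$ in $H^k(\Mb_{g,n})$ to lie in $\mathcal{E}^*$. The open part requires a more delicate analysis: we must verify that $H^k(\M_{g,n})$ produces no new semisimple summand beyond those allowed by the Chenevier--Lannes prediction, using the automorphic constraints on motivic structures in degrees below $22$ combined with explicit control on $H^*(\M_{g,n})$ for small invariants.

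The main obstacle is the base cases --- those low-genus moduli spaces where ``new'' motivic contributions (such as $\s_{16}$ from genus $7$) could first appear and would have to be ruled out by direct geometric input. This is precisely where I would invoke the paper's result that the pure cohomology of $\M_{7,n}$ is generated by algebraic cycle classes for $n \leq 3$, established via Mukai's description of canonically embedded general genus $7$ curves as linear sections of the orthogonal Grassmannian $\OG(5,10)$, combined with a decomposition-of-diagonal argument. Once these base cases, together with analogous inputs for the other low-genus strata that could support cusp-form contributions in degrees $13$--$15$, are secured, the inductive machinery propagates the conclusion to all $g$ and $n$; the $g = 1$ edge case for degree $15$, excluded from the theorem by the hypothesis $g \geq 2$, would require separate treatment via known computations on $\Mb_{1,n}$.
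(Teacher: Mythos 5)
Your high-level strategy is broadly consistent with the paper's --- reduce to statements about STEs, control boundary contributions by induction, and use CKgP for the base cases --- but there are a few genuine gaps in how you've set things up, and one important step is missing entirely.

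First, the proposal misattributes the source of the $\s_{16}$ contribution. The class $\s_{16}$ enters through $H^{15}(\Mb_{1,15})$ (it is the Hodge structure of weight-$16$ cusp forms for $\SL_2(\zz)$), not through genus $7$. The genus-$7$ CKgP results serve a different purpose: they show $W_{15}H^{15}(\M_{7,n}) = 0$ for $n\le 3$, which are base cases needed to run the Arbarello--Cornalba-style induction (in the form of Lemma~\ref{fc} and the exact sequence \eqref{BMsequence}). Because you've misidentified where $\s_{16}$ comes from, your proposal doesn't engage with the actual remaining difficulty: showing that $\s_{16}$, which certainly appears in $H^{15}(\Mb_{1,n})$ for $n\ge 15$, never propagates to $H^{15}(\Mb_{g,n})$ for $g\ge 2$. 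Excluding $g=1$ ``by the hypothesis of the theorem'' does not make this go away --- the STE generated by $H^{11}(\Mb_{1,11})$ and $H^{15}(\Mb_{1,15})$ \emph{does} contain the relevant homology (Theorem~\ref{thm:15}), but converting that to a cohomology statement via Hard Lefschetz only gives $H^{15}(\Mb_{g,n})^{\semis}\subset \bigoplus \l^2\s_{12}\oplus\bigoplus\s_{16}$, and one still has to rule out the $\s_{16}$ summands for $g\ge 2$.

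Second, you invoke ``automorphic constraints on motivic structures in degrees below $22$'' (the Chenevier--Lannes prediction) as an input when controlling $H^*(\M_{g,n})$. This is circular: the theorem is confirming the Chenevier--Lannes prediction, so it cannot be assumed. The paper's substitutes are Petersen's classification of cohomology of local systems on $\A_2$ (yielding Proposition~\ref{danA2} and Corollary~\ref{hodge13g2} for $g=2$) and, for $g\ge 3$, an explicit analysis of the first two maps of the weight-$15$ spectral sequence complex in \eqref{beta}: one shows the pullback $\alpha$ to the normalized boundary is injective (using CKgP to get $\mathrm{gr}^W_{15}H^{15}_c(\M_{g,n})=0$ in the relevant range), and then shows the map $\beta$ is injective on $H^{15,0}$ by a graph-theoretic argument ($\Gamma$ and $\Gamma'$ in Figure~\ref{Gamma2p}). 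None of this machinery appears in your proposal, and without it the degree-$15$ statement for $g\ge 2$ is not proved. The degree-$13$ and degree-$14$ parts are unproblematic in your approach and match the paper: they follow from Corollaries~\ref{homologymotives13} and~\ref{homologymotives14} plus Hard Lefschetz.
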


\noindent Theorem~\ref{thm:131415} confirms the predictions of Chenevier and Lannes for motivic weights $k \leq 15$.  Note that the Hodge structure on the cohomology of a smooth and proper Deligne--Mumford stack such as $\Mb_{g,n}$ is semi-simple, so the semi-simplification in Theorem~\ref{thm:131415} is relevant only when considering $\ell$-adic Galois representations.

The proof of Theorem~\ref{thm:131415} uses the inductive structure of the boundary of the moduli space and the maps induced by tautological morphisms between moduli spaces, as do the proofs of the precursor results mentioned above.  Recall that the collection of tautological rings $RH^*(\Mb_{g,n}) \subset H^*(\Mb_{g,n})$ is the smallest collection of subrings that is closed under pushforward and pullback for the tautological morphisms induced by gluing, forgetting, and permuting marked points. For many inductive arguments, including those used here, it suffices to consider the operations that produce cohomology classes on moduli spaces of larger dimension from those on moduli spaces of smaller dimension. 

\begin{definition}
A \emph{semi-tautological extension} (STE) is a collection of subrings $S^*(\Mb_{g,n})$ of $H^*(\Mb_{g,n})$ that contains the tautological subrings $RH^*(\Mb_{g,n})$ and is closed under pullback by forgetting and permuting marked points and under pushforward for gluing marked points.
\end{definition} 

\noindent Examples of STEs include the trivial extension $RH^*$, the full cohomology rings $H^*$, and the collection of subrings generated by algebraic cycle classes.  Not every STE is closed under the additional tautological operations induced by push-forward for forgetting marked points and pullback for gluing marked points. However, the main examples we study here are indeed closed under all of the tautological operations. See Proposition~\ref{ts}.  

Note that any intersection of STEs is an STE. An STE is \emph{finitely generated} if it is the smallest STE that contains a given finite subset (or, equivalently, the union of finitely many $\qq$-vector subspaces) of $\coprod_{g,n} H^*(\Mb_{g,n})$. A finitely generated STE is suitable for combinatorial study via algebraic operations on decorated graphs. See \cite{PandharipandePixtonZvonkine, Pixton} for discussions of the graphical algebra underlying the tautological ring, and \cite{PayneWillwacher21, PayneWillwacher23} for applications of such operadic methods, with not necessarily tautological decorations, to the weight spectral sequence for $\M_{g,n}$.  Every STE that we consider is \emph{motivic}, meaning that $S^*(\Mb_{g,n}) \subset H^*(\Mb_{g,n})$ is a sub-Hodge structure and its base change to $\mathbb{Q}_\ell$ is preserved by the Galois action.

\begin{thm}\label{thm:fgSTE} For any fixed degree $k$, the STE generated by 
\[\left\{H^{k'}(\Mb_{g',n'}) : k' \leq k, g' < \tfrac{3}{2}k' + 1, \  n' \leq k', \ 4g' - 4 + n' \geq k' \right\}\]
contains $H^k(\Mb_{g,n})$ for all $g$ and $n$. 
\end{thm}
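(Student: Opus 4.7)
Let $S^\bullet$ denote the STE generated by the displayed set. The plan is to induct on the dimension $d = 3g-3+n$ of $\Mb_{g,n}$, handling the boundary part and the pure part separately.

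\smallskip
\emph{Boundary contributions.} The Deligne weight spectral sequence identifies the kernel of the restriction $H^k(\Mb_{g,n}) \to H^k(\M_{g,n})$ with the image of Gysin pushforward along the gluing morphisms whose sources are $\Mb_{g-1,n+2}$ and $\Mb_{g_1,n_1+1} \times \Mb_{g_2,n_2+1}$, each of dimension $d-1$. By the inductive hypothesis their cohomology is contained in $S^\bullet$, and since $S^\bullet$ is closed under gluing pushforward the kernel lies in $S^\bullet$.

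\smallskip
\emph{The pure part.} It then suffices to lift $\grw H^k(\M_{g,n})$, which equals the image of restriction, into $S^\bullet$. If $4g-4+n < k$ then $H^k(\M_{g,n}) = 0$ by Harer's virtual cohomological dimension theorem, so there is nothing further to show. If $g \geq \tfrac{3}{2}k+1$ then by Boldsen's improvement of Harer--Ivanov stability together with the Madsen--Weiss theorem (extended to marked points), $H^k(\M_{g,n})$ is generated by the tautological classes $\kappa_i$ and $\psi_j$, which extend to $RH^*(\Mb_{g,n}) \subset S^\bullet$. If $g < \tfrac{3}{2}k+1$, $4g-4+n \geq k$, and $n \leq k$, then $H^k(\Mb_{g,n})$ is already one of the generators.

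\smallskip
\emph{Reducing the number of marked points.} The remaining case is $g < \tfrac{3}{2}k+1$ with $n > k$. I would use the forgetful morphism $\pi \colon \Mb_{g,n} \to \Mb_{g,n-1}$ presenting $\Mb_{g,n}$ as the universal curve, together with an Arbarello--Cornalba-type decomposition expressing any class in $H^k(\Mb_{g,n})$ as a sum $\pi^*\alpha + \psi_n \cdot \pi^*\beta + \gamma$, where $\gamma$ is a sum of pushforwards along the gluings $\Mb_{g,n-1} \to \Mb_{g,n}$ that attach a rational tail carrying the $n$-th marked point. By induction applied to $\Mb_{g,n-1}$ (of dimension $d-1$), all such classes lie in $S^\bullet$, since pullback by $\pi$ and multiplication by the tautological class $\psi_n$ preserve STEs.

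\smallskip
The main obstacle is justifying this last decomposition at the level of full cohomology, not merely the tautological subring: the local system $R^1\pi_*\qq$ on $\Mb_{g,n-1}$ may a priori carry motivic structures that are not $\pi$-pullbacks from the base. A possible route is to combine the Leray--Hirsch splitting coming from the section of $\pi$ given by the $n$-th marked point, the adjunction formula along this section, and the inductive control on $H^{k-1}(\Mb_{g,n-1})$ and $H^{k-2}(\Mb_{g,n-1})$, in order to absorb the $R^1\pi_*$-contributions into $\psi_n$- and boundary-terms. Representation stability for the $S_n$-action on $H^k(\Mb_{g,n})$, in the sense of Church--Ellenberg--Farb and Jim\'enez Rolland, provides complementary heuristic support that no genuinely new motivic structures arise as $n$ grows past $k$.
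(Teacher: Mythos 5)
Your overall strategy matches the paper's at a structural level: reduce modulo boundary pushforwards and then analyze the pure part via the vcd bound, stability, and the remaining middle range. The three ``easy'' cases (vcd vanishing for $4g-4+n < k$, stability for $g \geq \tfrac{3}{2}k+1$, generators for $n \leq k$) are handled correctly, modulo the usual caveats about framing (the paper cites Wahl for the range $k \leq \tfrac{2g-2}{3}$; your Boldsen/Madsen--Weiss references give the same bound). The paper packages the induction as Lemma~\ref{fc}, which is essentially the bookkeeping you sketch; it additionally tracks the dual-degree condition $2d_{g',n'} - k' \leq 2d_{g,n} - k$ needed so that the K\"unneth factors on the boundary stay in the right range, which your induction on total dimension does not explicitly address but could be patched.

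The genuine gap is exactly where you flag it: the reduction from $n > k$ to smaller $n$. Your proposed Arbarello--Cornalba-type decomposition $\pi^*\alpha + \psi_n\,\pi^*\beta + \gamma$ of an arbitrary class in $H^k(\Mb_{g,n})$ is not established for full cohomology, and the obstruction is precisely the contribution of $H^{k-1}(\Mb_{g,n-1}, R^1\pi_*\qq)$ in the Leray spectral sequence for the universal curve $\pi\colon \Mb_{g,n} \to \Mb_{g,n-1}$. None of the ingredients you list --- the Leray--Hirsch splitting from the section, adjunction along it, or representation stability --- force that contribution to vanish or to lie in the span of the other terms. Indeed, these local-system pieces are exactly where the non-Tate motives on $\Mb_{g,n}$ live, so a priori they are the interesting part. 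The paper resolves this by working over $\M_g$ rather than over $\Mb_{g,n-1}$: Lemma~\ref{blue}(b) uses the open inclusion $\M_{g,n} \subset \C^n$ and the degeneration of the Leray spectral sequence for the smooth proper map $\C^n \to \M_g$, together with the K\"unneth decomposition of $R^q f_*\qq$, to show that when $n \geq k$ and $n > 0$ every K\"unneth summand contributing to $W_kH^k(\M_{g,n})$ either has a factor $R^0\pi_*\qq$ (hence is a forgetful pullback) or lands in $H^0(\M_g, \mathbb{V}_\lambda) = 0$ for a nontrivial local system $\mathbb{V}_\lambda$. That is the lemma you would need, and it is not supplied by your proposed route; rather than Leray for $\Mb_{g,n} \to \Mb_{g,n-1}$ (which is not smooth and where the weight filtration on $R^1$ is delicate), the key is to pass to the open part and use the smooth proper family $\C^n \to \M_g$.
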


In particular, there is a finitely generated STE that contains $H^k(\Mb_{g,n})$ for all $g$ and $n$.

\begin{cor}
For each $k$, there are only finitely many isomorphism classes of simple Hodge structures (respectively, $\ell$-adic Galois representations) in $\bigoplus_{g,n} H^k(\Mb_{g,n})^{\semis}$.
\end{cor}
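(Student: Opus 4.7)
The plan is to deduce the corollary from Theorem~\ref{thm:fgSTE}. Fix a degree $k$ and let $\S^*$ be the finitely generated STE provided by that theorem, so that $H^k(\Mb_{g,n}) \subseteq \S^k(\Mb_{g,n})$ for all $g$ and $n$. Choose a finite generating set and let $V \subset \bigoplus_{g',n'} H^*(\Mb_{g',n'})$ denote the finite-dimensional $\qq$-subspace it spans. Because every STE under consideration is motivic, $V^{\semis}$ decomposes as a direct sum of finitely many simple summands $W_1,\ldots,W_M$, and it suffices to bound the isomorphism classes of simple summands appearing in $\bigoplus_{g,n}\S^k(\Mb_{g,n})^{\semis}$ in terms of the $W_i$.

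Every class in $\S^*$ is built from tautological classes and from the generators by iterated application of cup product, pullback along forgetful or permutation morphisms, and pushforward along the codimension-one gluing morphisms. Each operation is a morphism of Hodge structures (resp.\ $\ell$-adic Galois representations) up to a Tate twist: cup product is $\Delta^*$ applied to a boxed product and so factors through a tensor product, pullback is directly a morphism of the relevant structures, and pushforward along a gluing morphism factors through the K\"unneth decomposition on the product source and twists by $\l$. Moreover, since the tautological ring is generated by algebraic cycle classes, $RH^*(\Mb_{g,n})^{\semis}$ is a direct sum of Tate twists $\l^i$. Combining these observations, every simple summand of $\S^k(\Mb_{g,n})^{\semis}$ is a subquotient of a tensor product of the form
\[ W_{i_1} \otimes \cdots \otimes W_{i_r} \otimes \l^s \]
for some indices $i_1,\ldots,i_r \in \{1,\ldots,M\}$ and some $s \geq 0$.

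The argument closes with a weight count. Since $\Mb_{g,n}$ is smooth and proper, its cohomology is pure, and any simple summand of weight zero in its cohomology is isomorphic to the trivial Hodge structure $\qq(0)$; discarding such factors, we may assume every $W_{i_j}$ has weight at least $1$. Matching total weights to $k$ then gives $\sum_j w(W_{i_j}) + 2s = k$, which forces $r \leq k$ and $s \leq k/2$. Hence only finitely many tuples $(i_1,\ldots,i_r,s)$ can occur, and each resulting tensor product is a finite-dimensional object with only finitely many simple subquotients up to isomorphism. The main point to verify carefully is that each STE operation introduces at worst a tensor product and a Tate twist, which in turn rests on the fact that $RH^*$ is generated by algebraic cycle classes and that the gluing morphisms have codimension one.
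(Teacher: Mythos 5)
Your proof is correct and supplies the argument that the paper leaves implicit: the paper simply states the corollary immediately after observing that Theorem~\ref{thm:fgSTE} produces a finitely generated STE, without spelling out why finite generation forces finiteness of the simple motivic constituents. Your three observations — that cup product, forgetful/permutation pullback, and codimension-one gluing pushforward each factor through a tensor product or a Tate twist of the relevant structures, and that $RH^*$ contributes only powers of $\l$ because it is generated by algebraic cycle classes on a smooth proper stack — are exactly the right ingredients, and the weight bookkeeping ($\sum_j w(W_{i_j}) + 2s = k$ with each $w(W_{i_j})\geq 1$, giving $r\leq k$ and $s\leq k/2$) correctly bounds the number of tensor products to consider. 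One small point worth flagging for precision: the claim that every simple summand of $\S^k$ is a \emph{subquotient} of a tensor product of $W_i$'s and $\l$'s relies on semisimplicity of tensor products of polarizable Hodge structures (resp. semisimple Galois representations in characteristic zero) so that subquotients agree with direct summands; this is standard but could be cited.
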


We developed the notion of STEs to study nontrivial extensions of tautological rings, such as the STE generated by $H^{11}(\Mb_{1,11})$, but the same methods also yield new results on the tautological ring itself.  We apply the explicit bounds in Theorem~\ref{thm:fgSTE}, together with new tools and results for small $g$, $n$, and $k$ to prove the following.  At the level of $\qq$-vector spaces, we identify $H_k(\Mb_{g,n})$ with $H^{2d_{g,n} - k}(\Mb_{g,n})$, where $d_{g,n} := 3g-3+n$ is the dimension of $\Mb_{g,n}$.  Similarly, when $S^*$ is an STE we write $S_k(\Mb_{g,n})$ for the $\qq$-vector space $S^{2d_{g,n} - k}(\Mb_{g,n})$.

\begin{thm} \label{thm:taut}
        The tautological ring $RH^*(\Mb_{g,n})$ contains \begin{enumerate}
        \item \label{it:k4}$H^4(\Mb_{g,n}),$ for all $g$ and $n$,
        \item  $H^6(\Mb_{g,n}),$ for $g \geq 10$,
        \item \label{it:homology} $H_{k}(\Mb_{g,n}),$ for even $k \leq 14$, for all $g$ and $n$.
    \end{enumerate}
\end{thm}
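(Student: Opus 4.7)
All three parts are proved by reducing to a finite list of base cases on $\Mb_{g',n'}$ in small genus, using Theorem~\ref{thm:fgSTE} for parts (\ref{it:k4}) and (2), together with an analogous homological reduction for part (\ref{it:homology}). The key observation is that the tautological rings $RH^*$ themselves form an STE, since they are closed under all tautological operations, in particular pullback by forgetful and permutation maps and pushforward along gluing maps. Hence, once every element of the finite generating set in Theorem~\ref{thm:fgSTE} is shown to lie in $RH^*$ on each base $\Mb_{g',n'}$, the STE it generates is contained in $RH^*$, and the conclusion follows for all $\Mb_{g,n}$.

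For part (\ref{it:k4}), Theorem~\ref{thm:fgSTE} with $k = 4$ reduces the claim to verifying $H^{k'}(\Mb_{g',n'}) = RH^{k'}(\Mb_{g',n'})$ for $k' \leq 4$, $g' \leq 6$, $n' \leq 4$, and $4g' - 4 + n' \geq k'$. Degree $0$ is trivial, the odd degrees $k'=1,3$ vanish in this range, and $k'=2$ is the classical theorem of Arbarello--Cornalba, leaving only finitely many $k' = 4$ cases in low genus to verify. For part (2), Theorem~\ref{thm:fgSTE} with $k = 6$ combined with part (\ref{it:k4}) reduces to verifying $H^6(\Mb_{g',n'}) = RH^6(\Mb_{g',n'})$ for $g' \leq 9$ and $n' \leq 6$; the genuinely new cases lie in genus $7$. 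For part (\ref{it:homology}), an analogous inductive argument---using the boundary long exact sequence to express $H_k(\Mb_{g,n})$ in terms of $H_k(\M_{g,n})$ and pushforwards from boundary strata $\prod_v \Mb_{g_v,n_v}$---reduces again to a finite pool of base cases in bounded $(g',n')$, the open-part contribution being controlled by Miller--Morita--Mumford classes in the stable range and by explicit small-genus computations outside it.

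The chief obstacle is verifying the base cases in genus $7$ that arise in parts (2) and (\ref{it:homology}). Here we use Mukai's realization of the general canonically embedded pentagonal curve of genus $7$ as a transverse linear section of the orthogonal Grassmannian $\mathrm{OG}(5,10) \subset \p^{15}$, together with a decomposition of the diagonal, to prove that the pure cohomology of $\M_{7,n}$ for $n \leq 3$ is generated by algebraic cycle classes. Upgrading "algebraic" to "tautological" on $\Mb_{7,n}$ then uses the boundary exact sequence and STE closure: extensions of algebraic classes on $\M_{7,n}$ across the boundary differ from tautological classes only by pushforwards from lower-genus boundary strata, which are tautological by induction on dimension. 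The remaining small-genus base cases for $H^4$ and $H^6$ are covered by prior results in the literature (work of Mumford, Getzler, Petersen, Tavakol, and Canning--Larson), and assembling these inputs with the STE reduction completes the proof of all three parts.
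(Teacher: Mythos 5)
Your overall strategy for part~(\ref{it:k4}) matches the paper's: $RH^*$ is itself an STE, so once the generating set in Theorem~\ref{thm:fgSTE} (for $k=4$) is shown to be tautological, the generated STE equals $RH^*$. However, be careful about what ``verifying the base cases'' entails. The paper does not quote prior literature to show $H^4(\Mb_{g',n'}) = RH^4(\Mb_{g',n'})$ for $g'\leq 6$, $n'\leq 4$; rather, it feeds Lemma~\ref{fc} (the filling criterion) with Proposition~\ref{openCKgPresults}, which in turn rests on the \emph{new} Lemma~\ref{cycleclass} asserting that for a smooth (possibly non-proper) space with the CKgP, $W_kH^k$ is algebraic. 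That diagonal-decomposition argument is a genuine contribution of this paper, not a consequence of earlier Canning--Larson work alone. You do invoke a decomposition of the diagonal for $\M_{7,n}$, so you have the right idea, but you should recognize that this is the engine that handles \emph{all} the open-moduli base cases, including the genus $\leq 6$ ones needed for $H^4$.

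The serious gap is in part~(2). You propose to run Theorem~\ref{thm:fgSTE} with $k=6$, which would reduce the claim to checking that $H^6(\Mb_{g',n'})$ is tautological for $g'\leq 9$ and $n'\leq 6$, and you assert these base cases ``are covered by prior results in the literature.'' They are not. If they were, one would obtain $H^6(\Mb_{g,n}) = RH^6(\Mb_{g,n})$ for \emph{all} $g$ and $n$, which is Conjecture~\ref{lowk} in degree $6$ and remains open. The paper even points out explicitly (in the remark after the proof of part~(2)) that showing this for $g\leq 9$, $n\leq 6$ would suffice, but that the relevant pairing computation in \textsf{admcycles} is currently infeasible. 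This is precisely why the theorem is stated only for $g\geq 10$. The paper's actual proof of part~(2) does not go through Theorem~\ref{thm:fgSTE} at all: it observes that classes pushed forward from the boundary land in $RH^6$ by part~(1) and Arbarello--Cornalba, and that $H^6(\M_{g,n})$ lies in the stable range and is hence tautological for $g\geq 10$ (citing Wahl), then concludes via the boundary right-exact sequence. Your route cannot close.

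For part~(\ref{it:homology}), your high-level description (boundary exact sequence in Borel--Moore homology, finite reduction) is correct, but the mechanism you name for the open part---``Miller--Morita--Mumford classes in the stable range''---is not what the paper uses. The reduction exploits the vanishing $H_k(\M_{g,n})=0$ for $k < 2g$ (resp.\ $k < 2g-2+n$), which cuts the problem down to $g\leq 7$; the remaining cases with $g\geq 3$ are handled by CKgP (Proposition~\ref{openCKgPresults}, including the new genus-$7$ cases with $n\leq 3$), and genus $0,1,2$ are handled separately by Keel, Petersen, and Corollary~\ref{g2lowdegreetaut} respectively. Stability plays no role in this part.
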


\begin{thm} \label{thm:13}
The STE generated by $H^{11}(\Mb_{1,11})$ contains $H_{13}(\Mb_{g,n})$ for all $g$ and $n$. 
\end{thm}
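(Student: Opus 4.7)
The proof proceeds by induction on the dimension $d_{g,n}=3g-3+n$, combining the boundary stratification of $\Mb_{g,n}$ with Harer's vanishing for $\M_{g,n}$ in high degree. Set $k:=2d_{g,n}-13$ throughout, so that $H_{13}(\Mb_{g,n}) = H^k(\Mb_{g,n})$.

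\textbf{Boundary-interior reduction.} The boundary $\partial\Mb_{g,n}$ is a simple normal crossings divisor whose irreducible components are, up to finite covers, gluing images of $\Mb_{g_1,n_1+1}\times\Mb_{g_2,n_2+1}$ and $\Mb_{g-1,n+2}$. The weight spectral sequence of this boundary implies that any class in $H^k(\Mb_{g,n})$ whose restriction to $\M_{g,n}$ vanishes lies in the image of the Gysin map from $H^{k-2}$ of the normalization of $\partial\Mb_{g,n}$. Since $k-2=2(d_{g,n}-1)-13$ equals the $H_{13}$-degree on each boundary component, the inductive hypothesis places these boundary classes in the STE generated by $H^{11}(\Mb_{1,11})$; closure of STEs under pushforward along gluing then carries them into the STE of $H^*(\Mb_{g,n})$.

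\textbf{Interior vanishing.} By Harer's theorem, $\M_{g,n}$ has the homotopy type of a CW complex of dimension at most $4g-4+n$ for $n\geq 1$ (respectively $4g-5$ for $n=0$). Hence $H^k(\M_{g,n})=0$ whenever $k>4g-4+n$, an inequality that simplifies to $2g+n>15$; the analogous bound for $n=0$ becomes $g>7$. For all such $(g,n)$ the restriction $H^k(\Mb_{g,n}) \to H^k(\M_{g,n})$ is the zero map, the Gysin image exhausts $H^k(\Mb_{g,n})$, and the induction closes.

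\textbf{Base cases and main obstacle.} Only finitely many pairs $(g,n)$ remain: those with $n\geq 1$, $2g+n\leq 15$, $d_{g,n}\geq 7$, together with $n=0$, $g\leq 7$, $d_{g,n}\geq 7$. For $g=0$, Keel's theorem places $H^*(\Mb_{0,n})$ in the tautological ring, hence in the STE. Otherwise the relevant cohomological degree $k$ takes small odd values: degrees $k\leq 10$ follow from known tautologicality results for low-degree cohomology of $\Mb_{g,n}$; degrees $k\in\{11,13,15\}$ are constrained by Theorem~\ref{thm:131415}, which identifies the semisimplification of $H^k(\Mb_{g,n})$ with copies of $\lstw$ and its Tate twists; the larger values of $k$ that occur for the highest-dimensional base cases are handled by further boundary-interior reductions within the base case itself, or by explicit cohomology computations available in the small-genus regime. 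The principal obstacle is the step that invokes Theorem~\ref{thm:131415}: one must verify that each simple $\lstw$-factor arises as an honest STE class obtained from $H^{11}(\Mb_{1,11})$ via pullback, pushforward along gluing, and multiplication by tautological classes, rather than merely being abstractly isomorphic to $\lstw$ as a Hodge structure or Galois representation.
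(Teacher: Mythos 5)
Your overall framework — induction on dimension via the boundary exact sequence, using Harer's vcd bound to reduce to finitely many base cases — matches the paper's strategy. But the proposal has two serious problems, and you have essentially flagged the second one yourself.

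\textbf{Circularity.} You invoke Theorem~\ref{thm:131415} to constrain the base cases, but in the paper the degree-13 part of Theorem~\ref{thm:131415} is \emph{deduced from} Theorem~\ref{thm:13} (via Corollary~\ref{homologymotives13}, inverting copies of $\mathsf{L}$). Using it here would make the argument circular. Moreover, even with Theorem~\ref{thm:131415} in hand, knowing $H^k(\Mb_{g,n})^{\semis}\cong\bigoplus\lstw$ as an abstract Galois representation or Hodge structure does not place those classes in the STE generated by $H^{11}(\Mb_{1,11})$ — this is exactly the ``principal obstacle'' you name at the end, and your proposal does not resolve it.

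\textbf{The missing ingredient is the CKgP.} What actually closes the induction in the base-case range $2g+n\leq 15$ (resp.\ $g\leq 6$ for $n\leq 1$) is the vanishing $W_{-13}H_{13}(\M_{g,n})=W_{2d_{g,n}-13}H^{2d_{g,n}-13}(\M_{g,n})=0$, which makes the Gysin map from $\widetilde{\partial\M}_{g,n}$ surjective. Harer gives this outside the base-case range, but inside it you need something else. The paper supplies this via Lemma~\ref{cycleclass}: if $\M_{g,n}$ has the Chow–K\"unneth generation Property, then $W_kH^k(\M_{g,n})$ is algebraic, hence vanishes in odd degree. This requires the CKgP for $\M_{g,n}$ in all the base cases, which Table~\ref{ckgptable} provides — and for $(g,n)=(7,0),(7,1),(7,2),(7,3)$ that is precisely the new content of Theorem~\ref{thm:genus7}, proved via Mukai's orthogonal Grassmannian construction in Section~\ref{sec:7CKgP}. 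Your proposal never mentions the CKgP or the cycle-class lemma, so this central vanishing is unaccounted for. The remaining exceptional pairs $Z=\{(2,11),(1,11),(1,12),(1,13)\}$, where $W_{-13}H_{13}(\M_{g,n})\neq 0$, also need dedicated arguments: the genus-$1$ cases via Proposition~\ref{genus1weightk}, and $(2,11)$ via Lemma~\ref{H13Mb211}, which combines the CKgP for $\M_{2,10}$ with the local-system analysis in Lemma~\ref{odd}. None of this appears in the proposal, so as written the proof does not go through.
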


\begin{thm} \label{thm:15}
The STE generated by $H^{11}(\Mb_{1,11})$ and $H^{15}(\Mb_{1,15})$ contains $H_{15}(\Mb_{g,n})$ for all $g$ and $n$. 
\end{thm}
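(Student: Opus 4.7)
The plan is to prove Theorem~\ref{thm:15} by induction on $d_{g,n} = 3g-3+n$, following the same scheme as the proof of Theorem~\ref{thm:13} for $H_{13}$, with the second generator $H^{15}(\Mb_{1,15})$ supplied to cover a critical base case where a ``new'' motivic contribution first arises in middle cohomology on $\Mb_{1,15}$. At each inductive step, for $(g,n)$ with $d_{g,n}$ sufficiently large, I would decompose $H_{15}(\Mb_{g,n}) = H^{2d_{g,n}-15}(\Mb_{g,n})$ into three pieces: (i) boundary classes in the image of the gluing pushforwards $\xi_*\colon H_{15}(\Mb_{g-1,n+2}) \to H_{15}(\Mb_{g,n})$ and $\xi_*\colon H_{15-j}(\Mb_{g_1,n_1+1}) \otimes H_j(\Mb_{g_2,n_2+1}) \to H_{15}(\Mb_{g,n})$ (note that gluing preserves homological degree); (ii) forgetful pullbacks $\pi^*\colon H_{13}(\Mb_{g,n-1}) \to H_{15}(\Mb_{g,n})$, multiplied by tautological classes of cohomological degree $2$ (since $\pi^*$ raises homological degree by $2$); and (iii) a residual interior contribution. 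Pieces (i) and (ii) are absorbed into the STE by the inductive hypothesis combined with Theorems~\ref{thm:taut} and~\ref{thm:13}.

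The residual interior piece in (iii) is controlled by the pure weight-$(2d_{g,n}-15)$ part of $H^*(\M_{g,n})$. By an argument parallel to the proof of Theorem~\ref{thm:fgSTE}, combined with the Chenevier--Lannes constraints on which motivic types can appear in weight $\leq 15$, this residual piece vanishes outside a finite list of $(g,n)$. For each such base case $(g_0,n_0)$, Theorem~\ref{thm:131415} identifies the corresponding semisimplification as a sum of copies of $\l^{\bullet}\s_{12}$, and I would realize each copy explicitly by pulling back $\s_{12} \in H^{11}(\Mb_{1,11})$ along a forgetful map, multiplying by a suitable tautological class, and pushing forward via gluing. This construction suffices in every base case \emph{except} $\Mb_{1,15}$ itself, where $d_{1,15}=15$ makes $H_{15}$ middle cohomology and the required $\l^2\s_{12}$ contribution is not realizable as such a composite from $\s_{12}$; this is precisely the reason $H^{15}(\Mb_{1,15})$ must be adjoined as a second generator. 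Once it is adjoined, $\Mb_{1,15}$ is handled tautologically, and $\Mb_{1,n}$ for $n > 15$ follows by forgetful pullback from $\Mb_{1,15}$.

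The main obstacle will be verifying that the sub-STE generated by $\s_{12}$ alone truly fails to realize the full $\l^2\s_{12}$ contribution to $H^{15}(\Mb_{1,15})$, so that the second generator is genuinely necessary and that adjoining only it is sufficient. Handling this requires a direct comparison inside $H^{15}(\Mb_{1,15})$ between the classes obtainable as tautological multiples of pullbacks of $\s_{12}$ and the full cohomology, leveraging the known structure of $H^*(\Mb_{1,n})$ and a careful weight-theoretic bookkeeping. A secondary technical obstacle is to adapt the cohomological-degree bounds of Theorem~\ref{thm:fgSTE} to the $H_k$-setting, where the cohomological degree $2d_{g,n}-15$ grows with $(g,n)$; I would handle this via Poincar\'e duality together with the same weight-theoretic arguments underlying Theorem~\ref{thm:fgSTE}.
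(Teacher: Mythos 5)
Your inductive skeleton (reduce to a finite list of base cases via the boundary right-exact sequence \eqref{BMsequence}, then handle each base case) agrees with the paper's in broad outline, but the proposal has gaps that would be fatal as written. The most serious is how you propose to show the residual piece $W_{-15}H_{15}(\M_{g,n})$ vanishes outside a finite list: you appeal to the Chenevier--Lannes constraints, but these are conditional predictions resting on unproven conjectures about $L$-functions, and the whole point of the theorem is to establish them unconditionally. The paper instead kills $W_{-15}H_{15}(\M_{g,n})$ for the remaining cases with $g \geq 3$ using the Chow--K\"unneth generation property: for those $(g,n)$, $\M_{g,n}$ has the CKgP (Table~\ref{ckgptable}; for $g=7$, $n\leq 3$ this requires the new Mukai-construction analysis of Section~\ref{sec:7CKgP}), and Lemma~\ref{cycleclass} then forces $W_{-15}H_{15}(\M_{g,n})$ to be algebraic, hence zero in odd degree. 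This CKgP input is the paper's essential new technical tool, and your proposal has no substitute for it. Relatedly, invoking Theorem~\ref{thm:131415} to identify base-case semisimplifications is circular: the degree-$15$ assertion of Theorem~\ref{thm:131415} is deduced in the final section from Corollary~\ref{homologymotives15}, whose proof relies on Theorem~\ref{thm:15} itself.

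You also misidentify both why $H^{15}(\Mb_{1,15})$ must be adjoined and which base case is actually hard. The second generator is needed because $W_{15}H^{15}(\M_{1,15}) \cong \s_{16}$ by Proposition~\ref{genus1weightk} --- the weight-$16$ cusp form motive, a genuinely new simple object that cannot arise from $\s_{12}$-decorated graphs; this has nothing to do with any difficulty in realizing $\l^2\s_{12}$. Once $H^{15}(\Mb_{1,15})$ is adjoined, the genus-$1$ base cases follow directly from Proposition~\ref{genus1weightk}. The genuinely hard base case is $\Mb_{2,12}$: Lemma~\ref{mainl} shows $H^{15}(\Mb_{2,12})$ is pushed forward from the boundary by exhibiting an explicit $836$-dimensional space of $\omega$-decorated graph classes (Figure~\ref{pullbackchart}) and matching the Bergstr\"om--Faber Getzler--Kapranov dimension count \eqref{thedim}; the cases $(2,11)$ and $(2,13)$ then follow from Hard Lefschetz, Lemma~\ref{H13Mb211}, and Lemma~\ref{odd}. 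Your proposal does not address the genus-$2$ base cases at all, and without them the induction does not close.
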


Theorem~\ref{thm:taut}\eqref{it:k4} confirms a prediction of Arbarello and Cornalba from the 1990s; they proposed that their inductive method used to prove that $H^2(\Mb_{g,n})$ is tautological should also apply in degree 4 \cite[p.~1]{ArbarelloCornalba}. Shortly thereafter, Polito confirmed that $H^4(\Mb_{g,n})$ is tautological for $g \geq 8$ \cite{Polito}, but the general case remained open until now.

Theorem~\ref{thm:taut}\eqref{it:homology} implies that $H^k(\Mb_{g,n}) \cong \bigoplus \l^{k/2}$ for even $k \leq 14$.  The work of Chenevier and Lannes predicts that $H^k(\Mb_{g,n})$ should also be isomorphic to $\bigoplus \l^{k/2}$ for $k = 16, 18, 20$.  The Hodge and Tate conjectures then predict that these groups are generated by algebraic cycle classes. However,  generation by algebraic cycle classes is an open problem except in the cases where $H^k(\Mb_{g,n})$ is  known to be generated by tautological cycle classes. 

By Theorems~\ref{thm:13} and \ref{thm:15}, $H_{13}(\Mb_{g,n})$ and $H_{15}(\Mb_{g,n})$ lie in STEs generated by cohomology from genus $1$ moduli spaces. On the other hand, $H_{17}(\Mb_{g,n})$ requires genus $2$ data because $W_{17}H^{17}(\M_{2,14})\neq 0$ \cite{FPhandbook}.

 We note that any STE is top-heavy, in the sense that $\dim S^k(\Mb_{g,n}) \leq \dim S_k(\Mb_{g,n})$ for $k \leq \dim \Mb_{g,n}$ (cf. \cite{PetersenTommasi}).  Moreover, the dimensions in even and odd degrees are unimodal.  This is because any STE contains $RH^*$ and hence contains an ample class. Multiplication by a suitable power of the ample class gives an injection from $S^k(\Mb_{g,n})$ to $S_k(\Mb_{g,n})$. 

One could show that $H^k(\Mb_{g,n})$ is generated by tautological cycles in the remaining cases covered by Theorem~\ref{thm:taut}\eqref{it:homology} by showing that the pairing on $RH^k(\Mb_{g,n}) \times RH_k(\Mb_{g,n})$ is perfect, e.g., using \textsf{admcycles}  \cite{admcycles}. However, computational complexity prevents meaningful progress by brute force. Note that Petersen and Tommasi showed that this pairing is not perfect in general for $k \geq 22$ \cite{PetersenTommasi,Petersen}. Graber and Pandharipande had previously shown that $H^{22}(\Mb_{2,20})$ contains an algebraic cycle class that is not tautological \cite{GraberPandharipande}.  

\begin{conj} \label{lowk}
    The tautological ring $RH^*(\Mb_{g,n})$ contains $H^k(\Mb_{g,n})$ 
    for even $k \leq 20$.  
\end{conj}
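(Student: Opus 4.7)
The plan is to extend the inductive framework developed in this paper to higher cohomological degree. By Theorem~\ref{thm:fgSTE}, for each even $k \in \{16, 18, 20\}$, the problem reduces to verifying that $H^{k'}(\Mb_{g', n'}) \subseteq RH^{k'}(\Mb_{g', n'})$ on the finite collection of base cases with $k' \leq k$, $g' < \tfrac{3}{2}k' + 1$, $n' \leq k'$, and $4g' - 4 + n' \geq k'$. The work falls into two stages: first, establish the analog of Theorem~\ref{thm:131415} in degrees $16$, $18$, $20$, showing $H^k(\Mb_{g,n})^{\semis} \cong \bigoplus \l^{k/2}$; second, upgrade this abstract isomorphism to the statement that each $\l^{k/2}$-summand is realized by tautological classes.

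For the first stage, I would follow the method of Theorem~\ref{thm:131415}: use the STE structure and boundary induction to reduce to base cases, then rule out all motivic contributions other than $\l^{k/2}$. This requires new vanishing results for cuspidal classes and for Tate twists of $\s_{12}$ in the relevant base cases, obtained by extending the decomposition-of-the-diagonal arguments used here for $\M_{7,n}$ with $n \leq 3$ to additional small-genus moduli spaces that enter the base ranges, most notably higher values of $n$ for $\M_{7,n}$ and $\M_{6,n}$, and the spaces $\M_{8,n}$ and $\M_{9,n}$. Mukai's canonical-curve constructions through genus $9$, together with gonality stratifications and rationality results for small-$g$ moduli spaces, would provide the primary geometric input.

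The second stage is the harder one. As the paper notes, even upgrading Theorem~\ref{thm:taut}\eqref{it:homology} from $H_k$ to $H^k$ for even $k \leq 14$ would require showing that the pairing $RH^k \times RH_k \to \qq$ is perfect in the relevant base cases---a task obstructed by computational complexity in \textsf{admcycles}. A more promising direct approach is to combine the top-heavy inequality $\dim S^k \leq \dim S_k$ coming from ample-class multiplication with explicit tautological generators built from $\kappa$-classes, $\psi$-classes, and boundary pushforwards, and to compare dimensions with the predicted $\bigoplus \l^{k/2}$ on a case-by-case basis. For several small base cases this may be tractable using known presentations of $H^*(\Mb_{g,n})$ combined with targeted computer algebra.

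The main obstacle is the intermediate-genus bottleneck: base cases $(g', n', k')$ with $g'$ roughly in $[10, 30]$ and $k' \in \{16, 18, 20\}$, for which neither Mukai-type explicit constructions of canonical curves nor \textsf{admcycles} computations currently suffice. Breaking this bottleneck likely requires substantial new input---for example, an extension of the decomposition-of-the-diagonal technique to higher-genus base cases, a finer motivic analysis of Brill--Noether loci, or a cusp-form-theoretic argument ruling out non-tautological contributions at the level of cohomology rather than only of semisimplifications. The conjecture is sharp: the Petersen--Tommasi obstruction for $k \geq 22$ shows that any purely tautological argument must exploit the precise combinatorial threshold at $k = 20$.
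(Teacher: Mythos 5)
This statement is Conjecture~\ref{lowk}, not a theorem: the paper does not prove it and explicitly leaves it open. The paper records that it holds for $k \leq 4$ (Theorem~\ref{thm:taut}\eqref{it:k4}), for $k = 6$ and $g \geq 10$, and that for $k \leq 14$ it is equivalent to perfectness of the pairing $RH^k(\Mb_{g,n}) \times RH_k(\Mb_{g,n})$; for $k \in \{16,18,20\}$ even the homology statement and the Chenevier--Lannes-type classification remain unproven. So there is no paper proof to compare against.

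Your proposal is, appropriately, a research roadmap rather than a proof, and you are candid about where the gaps are. Two remarks on where your outline aligns with (and departs from) the paper's own discussion. First, the reduction to finitely many base cases via Theorem~\ref{thm:fgSTE} is correct: if $RH^{k'}(\Mb_{g',n'}) = H^{k'}(\Mb_{g',n'})$ for all generators in the list, then the STE they generate is contained in $RH^*$, which forces $RH^k(\Mb_{g,n}) = H^k(\Mb_{g,n})$. Second, your two-stage plan underestimates the gap between the stages. Establishing $H^k(\Mb_{g,n})^{\semis} \cong \bigoplus \l^{k/2}$ is necessary but far from sufficient: Graber--Pandharipande exhibit a non-tautological algebraic cycle class in $H^{22}(\Mb_{2,20})$, which has the expected Tate type. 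So purely motivic constraints cannot rule out non-tautological contributions; one needs either perfectness of the $RH^k \times RH_k$ pairing (which the paper notes is computationally out of reach via \textsf{admcycles}) or genuinely new geometric input. The paper's own remark that the Arbarello--Cornalba induction already forces $H^{16,0} = H^{18,0} = 0$ (Fontanari) is the kind of partial evidence one can currently obtain, but it falls well short of the conjecture. In short: your proposal is consistent with the paper's framing of the problem, but it remains a roadmap, and the conjecture itself stands open.
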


The results above show that Conjecture~\ref{lowk} is true for $k \leq 4$, and for $k = 6$ and $g \geq 10$. For $k \leq 14$, the conjecture is true if and only if the pairing $RH^k(\Mb_{g,n}) \times RH_k(\Mb_{g,n})$ is perfect.  The examples of Graber--Pandharipande and Petersen--Tommasi show that the conjectured bound of $k \leq 20$ is the best possible. 
As further evidence for Conjecture \ref{lowk}, we note that
the Arbarello--Cornalba induction together with known base cases implies the vanishing of $H^{16,0}(\Mb_{g,n})$ and $H^{18,0}(\Mb_{g,n})$ for all $g$ and $n$, as recently observed by Fontanari  \cite{Fontanari}.

\medskip

The inductive arguments used to study $H^{k}(\Mb_{g,n})$ for all $g$ and $n$ rely on understanding base cases $H^{k'}(\Mb_{g',n'})$ where $g'$ and $n'$ are small relative to $k$. As $k$ grows, more base cases are needed. With the exception of $H^{11}(\Mb_{1,11})$, all base cases required for previous work with $k \leq 12$ have been pure Hodge--Tate \cite{ArbarelloCornalba,BergstromFaberPayne,CanningLarson789}. Substantial work went into establishing these base cases via point counts and other methods. When $k \geq 13$, the problem becomes fundamentally more difficult, as an increasing number of the required base cases are \emph{not} pure Hodge--Tate. In particular, previous techniques for handling base cases do not apply. 

\medskip

The advances presented here depend on a new technique for controlling the pure weight cohomology of $\M_{g,n}$. A space $X$ has the \emph{Chow--K\"unneth generation Property (CKgP)} if the tensor product map on Chow groups $A_*(X) \otimes A_*(Y) \to A_*(X \times Y)$ is surjective for all $Y$. If $X$ is smooth, proper, and has the CKgP, then the cycle class map is an isomorphism. However, in several of the base cases needed for our arguments, the smooth and proper moduli space $\Mb_{g,n}$ has odd cohomology and hence does not have the CKgP. Nevertheless, we show that the open moduli spaces $\M_{g,n}$ do have the CKgP for the relevant pairs $(g,n)$. To apply this in the proof of our main results, the key new  technical statement is Lemma~\ref{cycleclass}, which says that if $X$ is smooth and has the CKgP, then $W_k H^k(X)$ is algebraic. This extension of the aforementioned result on the cycle class map to spaces that are not necessarily proper is essential for controlling the motivic structures that appear in $H^k(\Mb_{g,n})$ for $k \geq 13$. For example, using that $\M_{3,n}$ has the CKgP for $n \leq 11$ \cite[Theorem 1.4]{CL-CKgP}, we determine the Hodge structures and Galois representations that appear in $H^*(\Mb_{3,n})$, for $n \leq 11$.

\begin{thm} \label{thm:g3}
For $n \leq 11$, $H^*(\Mb_{3,n})^\semis$ is a polynomial in $\l$ and $\s_{12}$.
\end{thm}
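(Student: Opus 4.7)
The plan is to bootstrap from the CKgP for $\M_{3,n}$ with $n \leq 11$ \cite[Theorem 1.4]{CL-CKgP} to the full compactification $\Mb_{3,n}$ via a boundary stratification argument. Since $\M_{3,n}$ is smooth and has the CKgP for $n \leq 11$, Lemma~\ref{cycleclass} gives that $W_k H^k(\M_{3,n})$ is spanned by algebraic cycle classes, and hence $W_k H^k(\M_{3,n})^\semis$ is a direct sum of copies of $\l^{k/2}$. This handles the pure cohomology of the open stratum.

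Next, I would stratify $\Mb_{3,n}$ by the topological type $\Gamma$ of the dual graph; each open stratum $\M_\Gamma$ is a finite quotient of $\prod_v \M_{g_v, n_v}$ with $\sum_v g_v + h_1(\Gamma) = 3$ and $\sum_v n_v = n + 2|E(\Gamma)|$. Because $\Mb_{3,n}$ is smooth and proper, $H^k(\Mb_{3,n})$ is pure of weight $k$, and standard mixed Hodge theory (or iterated Gysin pushforwards from closures of boundary strata combined with open restriction) implies that $H^k(\Mb_{3,n})^\semis$ is a subquotient of
\[
\bigoplus_\Gamma W_{k - 2 c_\Gamma} H^{k - 2 c_\Gamma}(\M_\Gamma)^\semis \otimes \l^{c_\Gamma},
\]
where $c_\Gamma = \codim_{\Mb_{3,n}} \M_\Gamma$. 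By Künneth, it therefore suffices to show $W_j H^j(\M_{g_v, n_v})^\semis$ is a polynomial in $\l$ and $\s_{12}$ for each factor arising. For $g_v = 0$ this is Keel's theorem. For $g_v = 3$, the constraint $h_1(\Gamma) = 0$ forces $\Gamma$ to be a tree with one genus-$3$ vertex and all other vertices of genus $0$; a half-edge count using stability of the genus-$0$ vertices gives $n_v \leq n - |E(\Gamma)| \leq 11$, so the first paragraph applies. For $g_v = 1$ or $g_v = 2$ I would invoke the known polynomial-in-$\l$-and-$\s_{12}$ descriptions of $H^*(\Mb_{g_v, n_v})^\semis$ in the appropriate bounded range of $n_v$, due to work of Getzler, Petersen, and others, or equivalently set up a simultaneous induction on $\dim \Mb_{g,n} = 3g - 3 + n$ running over all $(g,n)$ with $g \leq 3$ to which the same stratification argument applies.

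The main obstacle is the genus $1$ and $2$ analysis. One must carry out the half-edge count to bound the marked-point count $n_v$ on factors $\M_{g_v, n_v}$ that can appear in strata of $\Mb_{3, n}$ with $n \leq 11$, and then confirm that $H^*(\Mb_{g_v, n_v})^\semis$ is a polynomial in $\l$ and $\s_{12}$ throughout that bounded range. In particular, one must verify that no cusp-form contributions beyond those giving $\s_{12}$ enter; for instance, the $2$-dimensional Galois representation associated with the weight-$14$ cusp form eventually appears in $H^*(\Mb_{2, n_v})^\semis$ for $n_v$ sufficiently large, so the count must show such $n_v$ do not arise, or the contributions must be excluded by a direct degree/weight argument.
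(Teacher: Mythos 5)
Your overall strategy — apply the CKgP for $\M_{3,n}$ and Lemma~\ref{cycleclass} to control $W_k H^k(\M_{3,n})$, then feed in boundary contributions — is the same starting point as the paper, but the proposal leaves the decisive step unresolved, and that step is not a routine "half-edge count."

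The paper's proof is explicitly stated as \emph{assuming Theorem~\ref{thm:131415}}, and this is the ingredient your proposal is missing. After an application of Hard Lefschetz to reduce to $k \leq 17$ (since $\dim \Mb_{3,n} = 6 + n \leq 17$), the paper uses the single right-exact sequence
$H^{k-2}(\widetilde{\partial\M}_{3,n}) \to H^k(\Mb_{3,n}) \to W_k H^k(\M_{3,n}) \to 0$,
so that the only boundary input needed is $H^{k'}(\Mb_{g',n'})^{\semis}$ in degrees $k' \leq 15$ for the one-edge stable graphs of genus $3$. Theorem~\ref{thm:131415}, together with the earlier results of \cite{CanningLarsonPayne}, is exactly what guarantees these groups are polynomials in $\l$ and $\s_{12}$. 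You identify the analogous genus $1$ and $2$ analysis as "the main obstacle" and then say you "would invoke known descriptions" or "set up a simultaneous induction," but the required descriptions in degrees $13$, $14$, $15$ are a main theorem of this paper, not something available off the shelf from Getzler or Petersen. Without Theorem~\ref{thm:131415} (or a substitute), the argument has a genuine gap.

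A second issue is that using the full stratification of $\Mb_{3,n}$ rather than the boundary divisor sequence makes the problem strictly harder. In deeper strata of $\Mb_{3,11}$, genus $1$ vertices can carry up to $15$ half-edges (e.g.\ the codimension $2$ stratum given by one genus $1$ vertex with two self-loops and all $11$ legs), so $\M_{1,15}$ and hence $\s_{16}$ appear in your proposed Künneth sum and must be ruled out by an extra argument. The boundary-divisor exact sequence avoids this entirely, since a one-edge graph of genus $3$ with $\leq 11$ legs has genus $1$ vertices with at most $12$ markings, safely below the first appearance of $\s_{16}$. Your proposed "subquotient of $\bigoplus_\Gamma W_{k-2c_\Gamma}H^{k-2c_\Gamma}(\M_\Gamma)\otimes\l^{c_\Gamma}$" claim is also stated loosely and would need to be made precise before it can be used. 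Finally, a small factual slip: there is no weight $14$ cusp form for $\SL_2(\zz)$ ($\dim \mathsf{S}_{14} = 0$); the first post-$\s_{12}$ cusp form motive is $\s_{16}$.

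Your instinct about where the difficulty lies is correct, but the proposal does not discharge it; it would need Theorem~\ref{thm:131415} (or something that plays its role), Hard Lefschetz to cap the relevant degrees at $17$, and the boundary-divisor exact sequence rather than a full stratification to keep the needed base cases within reach.
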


Bergstr\"om and Faber recently used point counting techniques to compute the cohomology of $\Mb_{3,n}$ as an $\ss_n$-equivariant Galois representation for $n \leq 14$.  For $n \geq 9$, these computations are conditional on the assumption that the only $\ell$-adic Galois representations appearing are those from the list of Chenevier and Lannes \cite{BergstromFaber}. Theorem~\ref{thm:g3} unconditionally confirms the calculations of Bergstr\"om and Faber for $n = 9$, $10$, and $11$.

In order to prove Theorems~\ref{thm:131415}, \ref{thm:taut}\eqref{it:homology}, and \ref{thm:15}, our inductive arguments require several base cases beyond  what was already in the literature. In particular, we prove the following results in genus $7$, which are also of independent interest.

\begin{thm} \label{thm:genus7}
For $n \leq 3$, the moduli space $\M_{7,n}$ has the CKgP and $R^*(\M_{7,n}) = A^*(\M_{7,n})$.
\end{thm}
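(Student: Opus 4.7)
The plan has two parts: first establish that $\M_{7,n}$ has the CKgP for $n \leq 3$, then upgrade to $R^*(\M_{7,n}) = A^*(\M_{7,n})$.

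For the CKgP, I would begin with $n = 0$ and stratify $\M_7$ by gonality into locally closed strata: hyperelliptic (codimension $5$), trigonal (codimension $3$), tetragonal (codimension $1$), and pentagonal (a dense open substack, since the general curve of genus $7$ has gonality $5$). The hyperelliptic and trigonal strata admit explicit descriptions as quotients of Hurwitz-type parameter spaces by reductive groups, and inherit the CKgP from standard arguments. The tetragonal locus is handled analogously using the Hurwitz space of degree $4$ covers of $\p^1$. Because the CKgP for a space follows from the CKgP of the pieces in a stratification, it remains to establish the CKgP on the generic pentagonal stratum $\M_7^\mathrm{pent} \subset \M_7$.

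This is the main technical step, and it uses Mukai's theorem: the canonical image of a general pentagonal curve of genus $7$ is a transverse linear section of the $10$-dimensional spinor variety $X = \OG(5,10) \subset \p^{15}$ cut out by $8$ hyperplanes. Thus there is a dominant morphism from an open subset $U$ of the quotient stack $\bigl[\Gr(8, H^0(X, \O_X(1)))/\Aut(X)\bigr]$ to $\M_7^\mathrm{pent}$, with fibers birational to a homogeneous variety. Grassmannians have the CKgP, and quotients by connected linear algebraic groups preserve it, so $U$ has the CKgP. To transfer this to $\M_7^\mathrm{pent}$, I would use a decomposition of the diagonal argument: generically, the diagonal of $\M_7^\mathrm{pent}$ is cut out by correspondences that are spread out from the universal family over $U$ via the Mukai construction, with the residual correction supported on a proper closed substack of strictly smaller gonality, which is already controlled by the earlier strata. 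Lemma~\ref{cycleclass} then gives that the pure weight cohomology is algebraic.

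For $n \in \{1,2,3\}$, I would proceed inductively via the universal curve $\pi : \M_{7,n} \to \M_{7,n-1}$, which is an open substack of a smooth proper curve fibration. Since the CKgP is preserved under passage to open substacks, projective bundles, and complements of divisors with CKgP (the tautological boundary and diagonal divisors), one propagates the CKgP from $\M_7$ through $\M_{7,1}, \M_{7,2}, \M_{7,3}$. The restriction $n \leq 3$ arises because beyond this the dimensions of the linear systems and parameter spaces involved no longer match up cleanly.

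For the tautological equality $R^*(\M_{7,n}) = A^*(\M_{7,n})$: the CKgP, together with the explicit Mukai model and the stratification, expresses every algebraic cycle class as a pushforward from boundary strata of smaller genus or from a quotient of a Grassmannian of linear sections. The contributions from smaller genus boundary strata are tautological by the inductive structure, while the contributions from the Mukai model can be matched against standard classes $\kappa$, $\psi$, and boundary divisors using the relationship between the spinor variety's Schubert calculus and the tautological relations on $\M_7$, supplemented by direct computation for $n \leq 3$. The main obstacle I foresee is executing the decomposition of the diagonal on $\M_7^\mathrm{pent}$ rigorously, in particular controlling the failure locus where the linear section ceases to be a smooth pentagonal curve, and ensuring the correction terms land in a subring already known to satisfy the CKgP.
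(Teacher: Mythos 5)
Your stratification by gonality and appeal to Mukai's construction for the pentagonal locus are the right starting points, but two of the key transfer steps in your plan do not work, and they are precisely where the paper has to do real geometry.

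First, the CKgP does \emph{not} propagate from $\M_{7,n-1}$ to $\M_{7,n}$ via the universal curve. The map $\M_{7,n}\to\M_{7,n-1}$ is an open substack of a smooth proper genus-$7$ curve fibration, not a projective bundle. The properties under which CKgP is stable (Proposition~\ref{CKgPprops}: open substacks, affine bundles, Grassmann bundles, proper surjective DM covers, stratification) pointedly do not include curve fibrations of positive genus, because the fibers have nonzero $H^1$ and that is exactly what the CKgP constrains. If your inductive step worked one could run it for arbitrary $n$, and that is false. The paper instead rebuilds each gonality stratum \emph{with markings}: for the tetragonal locus it re-runs the Casnati--Ekedahl analysis on $\H_{4,7,n}$, stratifying by splitting type and using evaluation maps on $(\pp\E^\vee)^n$; for the pentagonal locus it modifies Mukai's construction to realize $\M_{7,n}^\circ$ directly inside a Grassmann bundle over the $n$-fold fiber power $\OG(5,\V)^n$ of the universal orthogonal Grassmannian over $\BSO_{10}$. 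The bound $n\le 3$ is not a vague dimension mismatch but comes from concrete incidence conditions: for the tetragonal strata a surjectivity of an evaluation map at $n$ points is needed (\cite[Lemma~10.6]{CL-CKgP}), and for the pentagonal stratum one needs the $n$ marked points to be linearly independent under the spinor embedding, which uses that a pentagonal curve has no $g^1_n$ for $n\le 4$.

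Second, the ``decomposition of the diagonal'' you invoke for the pentagonal stratum is weaker than what Mukai provides and is not how the paper proceeds. Mukai's theorem is an equivalence of stacks: $\M_7^\circ\cong[\Gr(7,S^+)\smallsetminus\Delta/\SO_{10}]$, i.e.\ $\M_7^\circ$ is literally an \emph{open substack} of the Grassmann bundle $\Gr(7,\S^+)\to\BSO_{10}$. Once one checks that this Grassmann bundle (equivalently $\BSO_{10}$, via $\OG(5,\V)$ being an affine bundle over $\BGL_5$) has the CKgP, the result for $\M_7^\circ$ is immediate by restriction to an open substack---no diagonal decomposition needed. Decomposition of the diagonal is the tool in Lemma~\ref{cycleclass} for converting CKgP into algebraicity of $W_kH^k$, but it is not used to \emph{establish} the CKgP here, and it is not clear your version would close up: you would need to control the correspondence correction terms and you have no a~priori handle on them. (As a side remark, the relevant Grassmannian is $\Gr(7,S^+)$ parametrizing $\pp^6\subset\pp^{15}$, i.e.\ cut by $9$ hyperplanes, not $\Gr(8,\cdot)$ or $8$ hyperplanes.)

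Finally, for $R^*=A^*$: the paper does not need to match Schubert calculus on $\OG(5,10)$ against $\kappa$- and $\psi$-classes by hand. Because $\M_{7,n}^\circ$ sits inside an explicit Grassmann bundle, it suffices to identify the restrictions of the tautological subbundles and show their Chern classes are tautological on $\M_{7,n}^\circ$; this is done via Grothendieck--Riemann--Roch after recognizing $\alpha_n^*\U_i$, $\alpha_n^*\L_i$, and $\alpha_n^*\E_n$ in terms of pushforwards of powers of $\omega_f(-\sigma_i)$, of $\psi_i$, and of the Hodge bundle, respectively (Lemmas~\ref{Ui}--\ref{En}). A similar structural argument handles the tetragonal stratum via $(\pp\E^\vee)^n$. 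Your plan gestures at the right ingredients but does not give a mechanism for producing the needed identifications.
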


\noindent Here, $R^*$ denotes the subring of the Chow ring $A^*$ generated by tautological cycle classes.  Previous results proving that $\M_{g,n}$ has the CKgP and $R^*(\M_{g,n}) = A^*(\M_{g,n})$ for small $g$ and $n$ have primarily relied on corresponding results for Hurwitz spaces with marked points \cite{CL-CKgP}. Unfortunately, the numerics for degree $5$ covers prevented this technique from working with marked points. Here, we take a new approach to the pentagonal locus, using a modification of Mukai's construction \cite{Mukai7} that includes markings.

\medskip

\noindent \textbf{Acknowledgements.} We thank Jonas Bergstr\"om, Carel Faber, Rahul Pandharipande, Dan Petersen, Kartik Prasanna, Burt Totaro, and Thomas Willwacher for helpful conversations related to this work.  We especially thank Dan Petersen for his comments that led to an improved version of Lemma \ref{blue}.

\section{Preliminaries}

In this section, we establish notation and terminology that we will use throughout the paper and discuss a few basic examples of STEs. We also  recall previously known facts about the cohomology groups of moduli spaces, especially in genus 0, 1, and 2, that will be used in the base cases of our inductive arguments.

\subsection{Preliminaries on STEs}
Recall that an STE is, by definition, closed under the tautological operations given by pushing forward from the boundary or pulling back from moduli spaces with fewer marked points. Let $\widetilde{\partial \M}_{g,n}$ denote the normalization of the boundary. 
The sequence
\[ H^{k-2}(\widetilde{\partial \M}_{g,n}) \to H^{k}(\Mb_{g,n}) \to W_k H^k(\M_{g,n})\rightarrow 0\]
is right exact. 

Let $\pi_i \colon \M_{g,n} \to \M_{g,n-1}$ be the tautological morphism forgetting the $i$th marking and let
\[
\Phi^k_{g,n} := \pi_1^*W_kH^k(\M_{g,n-1}) + \cdots + \pi_n^*W_kH^k(\M_{g,n-1}) \subset W_kH^k(\M_{g,n}).
\]
The following lemma is a cohomological analogue of the ``filling criteria" in \cite[Section 4]{CL-CKgP}. 

We consider the partial order in which $(g', n') \prec (g, n)$ if $g' \leq g$, $2g' + n' \leq 2g + n$, and $(g',n') \neq (g, n)$. The moduli space $\Mb_{g,n}$ is stratified according to the topological types of stable curves, and each stratum of the boundary is a finite quotient of a product of moduli spaces $\M_{g',n'}$ such that $(g',n') \prec (g,n)$. Recall that we write $d_{g,n} := 3g - 3 + n$.

\begin{lem}
\label{fc}
Let $S^*$ be an STE and let $2g - 2 + n > 0$. If the canonical map  
\begin{equation} \label{check} S^{k'}(\Mb_{g',n'}) \to W_{k'}H^{k'}(\M_{g',n'})/ \big(\Phi^{k'}_{g',n'} + RH^{k'}(\M_{g',n'}) \big)
\end{equation}
is surjective for $(g',n',k') = (g, n, k)$ and all $(g', n', k')$ satisfying
\begin{equation} \label{ih} (g', n') \prec (g, n), \qquad 2d_{g',n'} - k' \leq 2d_{g,n} - k \qquad \text{and} \qquad k' \leq k-2
\end{equation}
then $S^k(\Mb_{g,n}) = H^k(\Mb_{g,n})$.
\end{lem}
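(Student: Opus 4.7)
The approach is to use the right-exact sequence
\[ H^{k-2}(\widetilde{\partial \M}_{g,n}) \to H^k(\Mb_{g,n}) \to W_kH^k(\M_{g,n})\to 0 \]
together with a double induction: outer on $k$, inner on $n$ at fixed $k$, applying the lemma itself recursively at smaller triples. It suffices to establish two claims: (i) the restriction $S^k(\Mb_{g,n}) \to W_kH^k(\M_{g,n})$ is surjective, and (ii) the image of the boundary pushforward lies in $S^k(\Mb_{g,n})$. Given these, for $\alpha \in H^k(\Mb_{g,n})$ one picks $s \in S^k$ with the same image in $W_kH^k(\M_{g,n})$, so that $\alpha - s$ lies in the kernel of the restriction, which equals the boundary image and hence sits in $S^k$ by (ii).

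For (i), the hypothesis at $(g,n,k)$ gives surjectivity onto the quotient $W_kH^k(\M_{g,n})/(\Phi^k_{g,n} + RH^k(\M_{g,n}))$. The $RH$-summand is automatic since $RH^* \subseteq S^*$. For each summand $\pi_i^* W_kH^k(\M_{g,n-1})$ of $\Phi^k_{g,n}$, invoke the inner inductive hypothesis at $(g, n-1, k)$, valid because $(g, n-1) \prec (g, n)$: this yields $S^k(\Mb_{g,n-1}) \twoheadrightarrow W_kH^k(\M_{g,n-1})$, and pulling back by $\bar\pi_i$---which preserves $S^*$ by STE-closure---realizes $\pi_i^* W_kH^k(\M_{g,n-1})$ as restrictions of elements of $S^k(\Mb_{g,n})$. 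The base case $n = 0$ is immediate since $\Phi^k_{g,0} = 0$. The main obstacle here is that the hypothesis only gives surjectivity modulo $\Phi^k$, so absorbing $\Phi^k$ itself forces the inner induction on $n$; the lemma is best viewed as being established simultaneously at all $(g', n', k)$ with $(g', n') \prec (g, n)$, propagating upward from $n = 0$.

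For (ii), decompose $\widetilde{\partial \M}_{g,n}$ into components, each a finite quotient of either $\Mb_{g-1, n+2}$ or of a product $\Mb_{g_1, n_1+1} \times \Mb_{g_2, n_2+1}$ with $g_1 + g_2 = g$ and $n_1 + n_2 = n$; in both cases the factors satisfy $(g', n') \prec (g, n)$. Via Künneth, $H^{k-2}$ of each component decomposes into external products $\alpha_1 \otimes \alpha_2$ with $|\alpha_1| + |\alpha_2| = k-2$. Setting $c_j := 2d_{g_j, n_j+1} - |\alpha_j|$, the identities $c_1 + c_2 = 2d_{g,n} - k$ and $c_j \geq 0$ force each $c_j \leq 2d_{g,n} - k$, so each $(g_j, n_j+1, |\alpha_j|)$ satisfies \eqref{ih}. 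The outer inductive hypothesis then places each $\alpha_j$ in $S^*$ of the corresponding factor, and STE-closure under gluing pushforward carries the external products into $S^k(\Mb_{g,n})$; the irreducible boundary case is analogous with a single factor $\Mb_{g-1, n+2}$.
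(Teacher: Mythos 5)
Your proof is correct and follows essentially the same strategy as the paper: both use the right-exact sequence for $W_kH^k(\M_{g,n})$, absorb $\Phi^k_{g,n}$ by the inductive step at $(g, n-1, k)$, absorb $RH^k$ trivially, and handle the boundary image via the K\"unneth decomposition together with the dimension estimate that forces each factor into the inductive range. The paper packages the argument in the commutative diagram \eqref{Sd} and phrases the boundary step as showing the map $\alpha$ is an isomorphism, but the content is identical to your claims (i) and (ii).
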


\begin{proof}
The proof is by induction on $g$ and $n$. Consider the diagram
\begin{equation} \label{Sd}
\begin{tikzcd}
S^{k-2}(\tilde{\partial \M_{g,n}}) \arrow{d}{\alpha} \arrow{r} & S^{k}(\Mb_{g,n}) \arrow{d}{\beta} \arrow{dr}{\phi} \\
H^{k-2}(\tilde{\partial \M_{g,n}})\arrow{r} & H^k(\Mb_{g,n})\arrow{r} & W_k H^k(\M_{g,n}) \arrow{r} & 0.
\end{tikzcd}
\end{equation}

Here, we extend $S^*$ to $\tilde{\partial \M_{g,n}}$ in the natural way, by summing over components, using the K\"unneth formula,  and taking invariants under automorphisms of the dual graph.  We claim that $\alpha$ is an isomorphism. By the K\"unneth formula, the domain of $\alpha$ is a sum of tensor products of $S^{\ell}(\Mb_{\gamma,\nu})$ with $\ell \leq k - 2$ and $(\gamma, \nu) \prec (g,n)$. Furthermore, by considering dimensions of the cycles involved, we must also have $2d_{\gamma,\nu} - \ell \leq 2d_{g,n} - k$. This is because  we must have $k -2- \ell \leq 2(d_{g,n} - 1 - d_{\gamma,\nu})$, so that the degree of the other K\"unneth component does not exceed its real dimension. Now, suppose we are given $(g',n',k')$ that satisfy
\[(g', n') \prec (\gamma,\nu), \qquad 2d_{g',n'} - k' \leq 2d_{\gamma,\nu} - \ell \qquad \text{and} \qquad k' \leq \ell-2.\]
Then $(g',n',k')$ satisfies \eqref{ih}, so  $S^{\ell}(\Mb_{\gamma, \nu}) = H^\ell(\Mb_{\gamma,\nu})$ by induction.  This proves the claim.

Next, by induction we have $H^k(\Mb_{g,n-1}) = S^k(\Mb_{g,n-1})$, so the image of 
\[\pi_i^*\colon H^k(\Mb_{g,n-1}) \to H^k(\Mb_{g,n})\] is contained in $S^k(\Mb_{g,n})$ for all $i$. Hence, $\Phi^k_{g,n}$ is contained in the image of $\phi$. The tautological classes $RH^k(\M_{g,n}) \subseteq W_k H^k(\M_{g,n})$ are also contained in the image of $\phi$ by definition.
Thus, the surjectivity of 
\[S^k(\Mb_{g,n}) \to W_kH^k(\M_{g,n})/\left(\Phi^k_{g,n} + RH^k(\M_{g,n})\right)\]
implies that $\phi$ is surjective. Hence, $\beta$ is also surjective, as desired.
\end{proof}

In order to apply Lemma \ref{fc}, we need results that help us understand generators for $W_kH^k(\M_{g,n})/\left(\Phi^k_{g,n} + RH^k(\M_{g,n})\right)$. This is the topic of Sections \ref{toprestrictions} and \ref{agp}.

\subsection{Pure cohomology in genus 1 and 2} \label{pg12}
On $\M_{0,n}$, the only nonvanishing pure cohomology is $W_0 H^0(\M_{0,n})$. Below, we present the complete classification of pure cohomology in genus $1$, which is due to Getzler \cite{Getzler}. In genus $2$, we present a classification in low cohomological degree, following Petersen \cite{Petersenlocalsystems}.

\subsubsection{Genus 1} The following statement appeared in \cite[Proposition 7]{Petersenappendix}; the proof there is omitted. Here, we include the proof (and corrected statement), which was explained to us by Dan Petersen.  Let $\mathsf{S}_{k+1}$ denote the weight $k$ structure associated to the space of cusp forms of weight $k+1$ for $\SL_2(\zz)$. Let $\pi \colon E \to \M_{1,1}$ be the universal curve and let $\mathbb{V}$ be the local system $R^1\pi_*\qq$. By the Eichler--Shimura correspondence, we have $\mathsf{S}_{k+1} = W_{k}H^1(\M_{1,1}, \mathbb{V}^{\otimes {k-1}})$. We will see that the latter is identified with $W_k H^k(\M_{1,k})$. More generally, we have the following. Given a partition $\lambda$ of $n$, let $V_\lambda$ be the associated Specht module representation of $\mathbb{S}_n$.

\begin{prop}\label{genus1weightk}
For $n\geq k,$ a basis for
$W_kH^k(\M_{1,n})$ is given by the $\binom{n-1}{k-1}$ pullbacks from $W_kH^k(\M_{1,A})$ where $A$ runs over all subsets of $\{1,\ldots,n\}$ of size $k$ such that $1 \in A$. Consequently,
there is an $\ss_n$-equivariant isomorphism 
\[
W_k H^k(\M_{1,n})\cong \mathsf{S}_{k+1}\otimes V_{n-k+1,1^{k-1}}.
\]
For $n<k$, we have
$W_k H^k(\M_{1,n})=0$.
\end{prop}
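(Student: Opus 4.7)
The plan is to analyze the Leray spectral sequence for the forgetful morphism $\pi \colon \M_{1,n} \to \M_{1,1}$ remembering the first marked point, combined with the Eichler--Shimura isomorphism for weight-$k$ cusp forms. Since $\M_{1,1}$ has cohomological dimension $1$ (its coarse space is $\mathbb{A}^1$), the Leray spectral sequence has only the columns $p = 0$ and $p = 1$ and degenerates at $E_2$:
\[
H^k(\M_{1,n}) \cong H^0(\M_{1,1}, R^k\pi_*\qq) \oplus H^1(\M_{1,1}, R^{k-1}\pi_*\qq).
\]
I would realize $\M_{1,n}$ as an open substack of the relative $(n-1)$-fold self-product $\mathcal{E}^{n-1}$ of the universal elliptic curve (with origin the first marked point), complement to the diagonals and the zero sections. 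By Deligne's theory together with the K\"unneth formula applied to $\mathcal{E}^{n-1}/\M_{1,1}$, the top-weight subsheaf $W_q R^q\pi_*\qq$ decomposes as a sum of summands $\mathbb{V}^{\otimes S}\otimes\qq(-1)^{\otimes T}$ for disjoint subsets $S,T\subseteq\{2,\ldots,n\}$. Monodromy considerations rule out any weight-$k$ invariants in the $E_2^{0,k}$ column, and by Eichler--Shimura the only weight-$k$ contribution to $E_2^{1,k-1}$ comes from the $\Sym^{k-1}\mathbb{V}$-isotypic component of $W_{k-1}R^{k-1}\pi_*\qq$, yielding $\mathsf{S}_{k+1}$ tensored with a multiplicity space.

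I would compute this multiplicity as an $\ss_{n-1}$-representation via Schur--Weyl duality. The key subtlety is that the K\"unneth decomposition of $H^*(\mathcal{E}^{n-1})$ carries Koszul signs upon permuting odd-degree factors, so the induced $\ss_{k-1}$-action on the $\Sym^{k-1}\mathbb{V}$-isotypic piece of each $\mathbb{V}^{\otimes S}$ is the sign representation (rather than the trivial one). Assembling over all $(k-1)$-subsets $S\subseteq\{2,\ldots,n\}$, the $\ss_{n-1}$-multiplicity is
\[
\mathrm{Ind}_{\ss_{k-1}\times\ss_{n-k}}^{\ss_{n-1}}(\mathrm{sign}\otimes\mathrm{triv}) \;\cong\; V_{(n-k,1^{k-1})}\oplus V_{(n-k+1,1^{k-2})}
\]
by Pieri's rule, which is precisely the branching-rule restriction of the hook Specht module $V_{(n-k+1, 1^{k-1})}$ from $\ss_n$ to $\ss_{n-1}$. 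Upgrading to the full $\ss_n$-action, the $\ss_n$-equivariant pullback map $\bigoplus_{|A|=k} W_kH^k(\M_{1,A})\to W_kH^k(\M_{1,n})$ has source $\mathsf{S}_{k+1}\otimes\mathrm{Ind}_{\ss_k\times\ss_{n-k}}^{\ss_n}(\mathrm{sign}\otimes\mathrm{triv}) = \mathsf{S}_{k+1}\otimes(V_{(n-k+1,1^{k-1})}\oplus V_{(n-k,1^k)})$ by Pieri, and the $\ss_{n-1}$-dimension identified above forces the image to be exactly $\mathsf{S}_{k+1}\otimes V_{(n-k+1, 1^{k-1})}$.

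For the basis statement, the standard tableaux basis of $V_{(n-k+1, 1^{k-1})}$ is naturally indexed by $(k-1)$-subsets of $\{2,\ldots,n\}$, equivalently by size-$k$ subsets $A\subseteq\{1,\ldots,n\}$ containing $1$, and the pullbacks $\pi_A^*$ realize this basis explicitly. For $n<k$, the partition $(n-k+1, 1^{k-1})$ is invalid, so the Specht module vanishes and the Leray computation yields zero, giving $W_kH^k(\M_{1,n})=0$.

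The main technical hurdle I anticipate is the careful bookkeeping of the K\"unneth signs together with the Schur--Weyl decomposition: without the sign twist, one would mistakenly identify the $\Sym^{k-1}\mathbb{V}$-multiplicity as the trivial $\ss_{k-1}$-representation, leading by Pieri to two-row Specht modules rather than the correct hook $V_{(n-k+1, 1^{k-1})}$.
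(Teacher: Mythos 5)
Your plan follows the paper's proof in all essentials: realize $\M_{1,n}$ as an open substack of $E^{n-1}$, apply Leray for the map to $\M_{1,1}$ with the K\"unneth decomposition of the fiber, isolate the $\Sym^{k-1}\mathbb{V}$-contribution via Eichler--Shimura, and pin down the $\ss_n$-representation by the Koszul-sign computation together with the Pieri/branching rule. The only (minor) difference is organizational — you run Leray directly on $\pi\colon\M_{1,n}\to\M_{1,1}$ and track weights on $R^q\pi_*\qq$, whereas the paper first applies Leray to the proper morphism $E^{n-1}\to\M_{1,1}$ and then quotients by the excision sequence \eqref{wkres}, which makes the elimination of the $i_j=2$ and symplectic-form (diagonal) summands more transparent; you should make sure those summands are explicitly accounted for, since they do not obviously vanish by Eichler--Shimura alone.
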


\begin{proof}
Let $\pi \colon E\rightarrow \M_{1,1}$ denote the universal elliptic curve, and $\sigma \colon \M_{1,1} \to E$ the section. Associated to the open embedding $\M_{1,n}\hookrightarrow E^{n-1}$, we have a right exact sequence 
\begin{equation}\label{wkres}
\bigoplus W_{k-2}H^{k-2}(E^{n-2})\rightarrow W_k H^k(E^{n-1})\rightarrow W_k H^k(\M_{1,n})\rightarrow 0.
\end{equation}
Since $f\colon E^{n-1}\rightarrow \M_{1,1}$ is smooth and proper, the Leray spectral sequence degenerates at $E_2$, and by \cite[Proposition 2.16]{DeligneSpectral}, we have a direct sum decomposition 
\begin{equation} \label{ek} H^k(E^{n-1}) = \bigoplus_{p+q = k} H^p(\M_{1,1}, R^qf_*\qq). 
\end{equation}
By the K\"unneth formula, we have
\[
R^qf_*\qq= \bigoplus_{i_2 + \cdots + i_{n} = q}R^{i_2}\pi_*\qq \otimes \cdots \otimes R^{i_{n}}\pi_*\qq.
\]
Let $\mathbb{V}:=R^1\pi_*\qq$ and note that $R^0\pi_*\qq = \qq$ and $R^2\pi_*\qq = \qq(-1)$. 

Fix some $p$ and $(i_2, \ldots, i_{n})$ with $p + i_2 + \cdots + i_{n} = k$.
If $i_j = 2$, then we claim that \begin{equation} \label{inhk} W_kH^p(\M_{1,1},
R^{i_2}\pi_*\qq \otimes \cdots \otimes R^{i_{n}}\pi_*\qq) \subset W_kH^k(E^{n-1})
\end{equation} 
lies in the image of the first map in \eqref{wkres}.
More precisely, let
$\alpha_{j} \colon E^{n-2} \rightarrow E^{n-1}$, be the locus where the $j$th entry in $E^{n-1}$ agrees with the section $\sigma\colon \M_{1,1} \to E$. In other words, $\alpha_j$ is defined by the fiber diagram
\begin{center}
\begin{tikzcd}
E^{n-2} \arrow{d} \arrow{r}{\alpha_j} & E^{n-1} \arrow{d}{\mathrm{pr}_j} \\
\M_{1,1} \arrow{r}[swap]{\sigma} & E.
\end{tikzcd}
\end{center}
If $i_j = 2,$ we have $p + i_2 + \cdots + i_{j-1} + i_{j+1} + \cdots + i_{n} = k-2$.
Therefore, using the Leray spectral sequence for $E^{n-2} \to \M_{1,1}$, there is a corresponding term 
\begin{equation} \label{inhk-2}
\resizebox{.92\hsize}{!}{
$\begin{aligned}
W_{k-2} H^p(\M_{1,1}, R^{i_2}\pi_*\qq \otimes \cdots \otimes R^{i_{j-1}}\pi_*\qq \otimes R^{i_{j+1}}\pi_*\qq \otimes \cdots \otimes R^{i_{n}}\pi_*\qq) \subset W_{k-2} H^{k-2}(E^{n-2}).
\end{aligned}$
    }
\end{equation}
Then the pushforward $\alpha_{j*}\colon H^{k-2}(E^{n-2}) \to H^k(E^{n-1})$ sends 
the subspace on the left of \eqref{inhk-2} isomorphically onto the subspace on the left of \eqref{inhk}, which proves the claim.

It follows that $W_kH^k(\M_{1,n})$ is generated by the terms $W_k H^p(\M_{1,1}, R^{i_2} \pi_*\qq \otimes \cdots \otimes R^{i_{n}}\pi_*\qq)$ in $W_k H^k(E^{n-1})$ where all $i_j \leq 1$.
By \cite[Section~2]{Petersengenus1}, we have $W_k H^p(\M_{1,1}, \mathbb{V}^{\otimes q}) = 0$ unless $p = 1$ and $q = k-1$, in which case $W_k H^1(\M_{1,1}, \mathbb{V}^{\otimes k-1}) = \mathsf{S}_{k+1}$ by Eichler--Shimura. 

There are $\binom{n-1}{k-1}$ terms of the form $H^1(\M_{1,1}, \mathbb{V}^{\otimes k-1})$ in \eqref{ek}, coming from choosing which $k-1$ of the $n-1$ indices have $i_j =1$. Each of these terms is pulled back along the projection map $E^{n-1} \to E^{k-1}$, which remembers the $k-1$ factors for which $i_j = 1$. Let $A$ be the collection of indices $j$ such that $i_j = 1$ together with $1$. There is a commutative diagram
\begin{center}
\begin{tikzcd}
W_kH^k(E^{n-1}) \arrow{r} & W_kH^k(\M_{1,n}) \\
W_kH^k(E^{k-1}) \arrow{u} \arrow{r} & W_k H^k(\M_{1,A}). \arrow{u}
\end{tikzcd}
\end{center}
It follows that $W_kH^k(\M_{1,n})$ is generated by the pullbacks from $W_kH^k(\M_{1,A})$ as $A$ ranges over all subsets of size $k$ containing $1$.
Finally, we note that there can be no relations among these $\binom{n-1}{k-1}$ copies of $W_k H^1(\M_{1,1}, \mathbb{V}^{\otimes k-1}) = \mathsf{S}_{k+1}$ since the image of the left-hand map of \eqref{wkres} lies in the subspace of type $\mathsf{L}^i \mathsf{S}_{k+1-2i}$ for $i \geq 1$.

We have now shown that $W_kH^k(\M_{1,n}) = \mathsf{S}_{k+1} \otimes U$ for some $\binom{n-1}{k-1}$-dimensional vector space $U$. From the discussion above, it is not difficult to identify $U$ as an $\mathbb{S}_n$-representation. When 
$n = k$, the $\mathbb{S}_k$ action on $W_kH^k(\M_{1,k})$ is the sign representation.
To identify $U$ for $n > k$, let $\mathbb{S}_{n-1} \subset \mathbb{S}_n$ be the subgroup that fixes $1$. Since $W_kH^k(\M_{1,n})$ is freely generated by the pullbacks from $W_kH^k(\M_{1,A})$ as $A$ runs over subsets of size $k$ containing $1$, we have 
\[\mathrm{Res}^{\mathbb{S}_n}_{\mathbb{S}_{n-1}} U = \mathrm{Ind}_{\mathbb{S}_{k-1} \times \mathbb{S}_{n-k}}^{\mathbb{S}_{n-1}}(\mathrm{sgn}\boxtimes \mathbf{1}).\]
By the Pieri rule, we have
\begin{equation} \label{resu}
\mathrm{Ind}_{\mathbb{S}_{k-1} \times \mathbb{S}_{n-k}}^{\mathbb{S}_{n-1}}(\mathrm{sgn}\boxtimes \mathbf{1}) =
\mathrm{Ind}_{\mathbb{S}_{k-1} \times \mathbb{S}_{n-k}}^{\mathbb{S}_{n-1}}(V_{1^{k-1}} \boxtimes V_{n-k}) = V_{n-k+1,1^{k-2}} \oplus V_{n-k,1^{k-1}}.\end{equation}
By the branching rule, $V_{n-k+1,1^{k-1}}$ is the unique $\mathbb{S}_n$ representation whose restriction to $\mathbb{S}_{n-1}$ is the representation in \eqref{resu}.
\end{proof}

Since $\mathsf{S}_{14} = 0$, we have the following.
\begin{cor} \label{1ns}
$H^{13}(\Mb_{1,n})$ lies in the STE generated by $H^{11}(\Mb_{1,11})$ for all $n$.    
\end{cor}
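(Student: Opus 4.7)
The plan is to apply Lemma~\ref{fc} to the STE $S^*$ generated by $H^{11}(\Mb_{1,11})$, taking $(g,n,k) = (1,n,13)$. The crucial inputs are Proposition~\ref{genus1weightk} and the classical vanishing $\s_{14} = 0$, which reflects that there are no cusp forms of weight $14$ for $\SL_2(\zz)$.

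First, I would check the surjectivity condition of \eqref{check} at the triple $(g',n',k') = (1,n,13)$ itself. By Proposition~\ref{genus1weightk}, $W_{13}H^{13}(\M_{1,n}) \cong \s_{14} \otimes V_{n-12,1^{12}}$ when $n \geq 13$ and vanishes otherwise, so in every case the target of \eqref{check} is zero and the map is trivially surjective.

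Next, I would verify the condition for every auxiliary triple $(g',n',k')$ meeting the hypothesis \eqref{ih}; here this forces $(g',n') \prec (1,n)$ and $k' \leq 11$. If $g' = 0$, then $\M_{0,n'}$ is rational, so $W_{k'}H^{k'}(\M_{0,n'}) = 0$ for $k' > 0$ and the quotient vanishes. If $g' = 1$ and $k' \leq 10$, then Proposition~\ref{genus1weightk} identifies $W_{k'}H^{k'}(\M_{1,n'})$ as a multiple of $\s_{k'+1}$, which is zero because $\SL_2(\zz)$ has no cusp forms of weight at most $11$. The remaining case is $g' = 1$, $k' = 11$: when $n' < 11$ the group $W_{11}H^{11}(\M_{1,n'})$ is zero; when $n' > 11$, Proposition~\ref{genus1weightk} realises every class of $W_{11}H^{11}(\M_{1,n'})$ as a forgetful-map pullback from some $W_{11}H^{11}(\M_{1,A})$ with $|A| = 11$, placing it in $\Phi^{11}_{1,n'}$ so the quotient vanishes; and when $n' = 11$ we have $S^{11}(\Mb_{1,11}) \supseteq H^{11}(\Mb_{1,11})$ by the definition of the generating set, so \eqref{check} is surjective.

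I do not anticipate any genuine obstacle. The only nonzero pure cohomology group encountered in the inductive range is $W_{11}H^{11}(\M_{1,11}) \cong \s_{12}$, and this is precisely what the STE acquires from its generator $H^{11}(\Mb_{1,11})$; everything else reduces to a bookkeeping application of Lemma~\ref{fc} together with the vanishing of low-weight cusp forms for $\SL_2(\zz)$.
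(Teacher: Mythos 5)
Your argument is correct and matches what the paper intends: the paper's terse proof (``Since $\mathsf{S}_{14}=0$\ldots'') is exactly the observation that $W_{13}H^{13}(\M_{1,n})$ vanishes by Proposition~\ref{genus1weightk}, combined with the filling criterion of Lemma~\ref{fc}, and you have spelled out the inductive verification of the hypotheses in \eqref{ih} accurately. The only nonzero target you encounter, $W_{11}H^{11}(\M_{1,11})\cong\mathsf{S}_{12}$, is handled correctly by the generator of the STE, and your treatment of $n'>11$ via forgetful pullbacks landing in $\Phi^{11}_{1,n'}$ is exactly the content of the basis statement in Proposition~\ref{genus1weightk}.
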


Let $S^*_\omega$ denote the STE generated by $H^{11}(\Mb_{1,11})$. Note that $S^*_\omega$ is the STE generated by the class $\omega \in H^{11,0}(\Mb_{1,11})$ associated to the weight $12$ cusp form for $\SL_2(\zz)$, i.e., it is the smallest STE whose complexification contains $\omega$. In \cite{CanningLarsonPayne}, we showed that $S^*_\omega$ contains $H^{11}(\Mb_{g,n})$, and hence $H_{11}(\Mb_{g,n})$, for all $g$ and $n$.  In this paper, we show that it also contains $H_{13}(\Mb_{g,n})$ for all $g$ and $n$ (Theorem~\ref{thm:13}). In contrast with the system of tautological rings, an arbitrary STE need not be closed under pushforward along maps forgetting marked points or pullback to the boundary. Nevertheless, we have the following result for $S_{\omega}^*(\Mb_{g,n})$.

\begin{prop} \label{ts}
The STE $S_{\omega}^*$ is closed under the tautological operations induced by pushforward for forgetting marked points and pullback for gluing marked points.
\end{prop}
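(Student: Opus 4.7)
The plan is to give an explicit description of $S_\omega^*$ as a span of decorated stable-graph classes and then verify directly that this description is closed under $\pi_*$ and $\xi^*$. Introduce the auxiliary ring $T^*(\Mb_{g,n})$ generated by $RH^*(\Mb_{g,n})$ together with, when $g=1$ and $n\geq 11$, all pullbacks $\pi_A^*\omega$ and $\pi_A^*\bar\omega$ for $11$-element subsets $A\subseteq\{1,\ldots,n\}$, where $\pi_A\colon\Mb_{1,n}\to\Mb_{1,11}$ forgets markings outside $A$; set $T^*=RH^*$ otherwise. Let $U^*(\Mb_{g,n})$ be the $\mathbb{Q}$-span of classes $\xi_{\Gamma,*}(\bigotimes_v\beta_v)$ with $\Gamma$ a stable graph for $(g,n)$ and $\beta_v\in T^*(\Mb_{g_v,n_v})$ at each vertex. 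Using the excess intersection formula to write products of such pushforwards as a single pushforward from a common refinement, together with the base change of $\pi^*$ against gluing, one checks that $U^*$ is itself an STE containing $\omega$, and minimality of $S_\omega^*$ gives $U^*=S_\omega^*$.

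For closure under $\pi_*$ along a forgetful map $\pi\colon\Mb_{g,n+1}\to\Mb_{g,n}$, start with a generator $\xi_{\Gamma,*}(\bigotimes_v\beta_v)$ and let $v_0$ denote the vertex containing the forgotten marking. Provided $v_0$ remains stable, the commutation $\pi\circ\xi_\Gamma=\xi_{\Gamma'}\circ(\pi_{v_0}\times\id)$ and the projection formula reduce the problem to $(\pi_{v_0})_*T^*\subseteq T^*$. For tautological summands this is classical; for an $\omega$-decorated summand $\tau\cdot\pi_A^*\omega$ with forgotten marking $j$, if $j\notin A$ then $\pi_A$ factors through $\pi_{v_0}$ and the projection formula gives the conclusion at once. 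If $j\in A$, the key ingredient is the Hodge-type vanishing $\pi_*\omega=0$ on $\Mb_{1,12}\to\Mb_{1,11}$ (the pushforward is of type $(-1,-1)$ and $H^{10,-1}=0$), applied in the diagram
\[
\begin{tikzcd}
\Mb_{1,n_{v_0}}\ar[r,"\pi_A"]\ar[d,"\pi_{v_0}"'] & \Mb_{1,11}\ar[d,"\pi'"]\\
\Mb_{1,n_{v_0}-1}\ar[r,"\pi''_{A\setminus\{j\}}"'] & \Mb_{1,10}
\end{tikzcd}
\]
which is Cartesian up to blowups along loci where markings coincide. Combined with the comparison formulas $\psi_i^{(n)}=\pi_{v_0}^*\psi_i^{(n-1)}+D_{i,j}$, this lets one reduce a general tautological $\tau$ to a pullback from $\Mb_{1,n_{v_0}-1}$ modulo boundary corrections lying in $T^*$. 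Destabilization of $v_0$ is handled by the vanishing of pushforwards from rational-tail components.

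For closure under $\xi^*$ with $\xi$ a gluing map, apply the excess intersection formula to $\xi^*\xi_{\Gamma,*}(\bigotimes_v\beta_v)$: the result decomposes as a sum over stable graphs $\Gamma''$ refining both $\xi$ and $\Gamma$, with each summand of the form $\xi_{\Gamma'',*}(\beta\cdot\text{tautological }\psi\text{-corrections})$ on the source of $\xi$. Since $T^*$ is closed under multiplication by tautological classes and under restriction to products of smaller moduli spaces at each vertex, every such summand lies in $U^*$. The main obstacle is the subcase $j\in A$ in the $\pi_*$ argument: the square above fails to be strictly Cartesian and requires careful accounting for the tautological boundary corrections arising from blowups along loci where markings collide, and this is precisely where the Hodge-theoretic vanishing $\pi_*\omega=0$ plays an essential role in ensuring the output stays within $T^*$.
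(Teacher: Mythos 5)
Your overall strategy---describe $S_\omega^*$ explicitly via decorated graph classes and then verify closure under $\pi_*$ and gluing pullback directly on generators---is the same as the paper's. The two proofs diverge at the technical level, and the divergence exposes real gaps in your version.

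The central difficulty you flag, the ``$j\in A$'' subcase of the forgetful pushforward, is precisely where your argument stops short of a proof. You write that the square of forgetful maps is ``Cartesian up to blowups along loci where markings coincide'' and that one must do ``careful accounting for the tautological boundary corrections''; this is a description of a difficulty, not a resolution of it, and the Cartesian-up-to-blowup claim is not correct as a general statement about squares of forgetful maps between moduli of pointed genus one curves. The paper never encounters this case because it first normalizes the decoration: since all positive-degree even cohomology and all $\psi$-classes on $\Mb_{1,m}$ are represented by boundary strata (Petersen), any tautological factor $\tau$ multiplying a $\pi_A^*\omega$ can be absorbed into the graph by splitting off further boundary components. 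After this reduction the decoration on a genus one vertex is a bare class in $H^{11}(\Mb_{1,m})$, and the forgetful pushforward of any such class vanishes outright because $H^{9}(\Mb_{1,m-1})=0$. That degree-count vanishing is also what actually kills $\pi_{j,*}\pi_A^*\omega$ in your set-up, with or without any diagram; you could have replaced the whole Cartesian discussion with one line. But for the general product $\tau\cdot\pi_A^*\omega$ with $j\in A$ you would still owe the reader the boundary-absorption step, which is exactly the move you did not make.

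Your gluing-pullback paragraph has the same shape of gap. You assert that ``$T^*$ is closed under restriction to products of smaller moduli spaces at each vertex,'' but that is the content of a nontrivial lemma, not an observation. The paper invokes \cite[Lemma 2.2]{CanningLarsonPayne} for precisely this: it controls $\xi^*\pi_A^*\omega$ for all gluing maps $\xi$, showing that the restriction is either zero or again a K\"unneth tensor of forgetful pullbacks of $\omega$ on genus one factors. Without some such statement you cannot even establish that your $U^*$ is a ring (the excess intersection formula applied to a product $\xi_{\Gamma_1,*}\alpha_1\cdot\xi_{\Gamma_2,*}\alpha_2$ already requires restricting $T^*$-decorations to boundary strata), so the identification $U^*=S_\omega^*$, which you use at the outset, is itself not fully justified. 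In short: the architecture is right, but both of the nonobvious inputs---the reduction to pure $H^{11}$-decorations (or, equivalently, the vanishing of forgetful pushforwards from $H^{11}(\Mb_{1,m})$) and the boundary-pullback control of \cite[Lemma 2.2]{CanningLarsonPayne}---are missing, and these are the two points where the paper's proof actually does work.
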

\begin{proof}
By \cite{Petersengenus1}, all even cohomology in genus $1$ is represented by boundary strata. Therefore, any product of two odd degree classes can be written as a sum of boundary strata.  It follows that every class in $S_{\omega}^*(\Mb_{g,n})$ can be represented as a linear combination of decorated graphs all of whose nontautological decorations are $H^{11}$-classes on genus 1 vertices.

The pushforward of $H^{11}(\Mb_{1,n})$ along the maps forgetting marked points is zero. Since the tautological rings are closed under pushforward, it follows that $S_{\omega}^*$ is closed under pushforward.

By \cite[Lemma 2.2]{CanningLarsonPayne}, the image of $H^{11}(\Mb_{1,n})$ under pullback to the boundary lies in $S^*_{\omega}$. By the excess intersection formula, and using the fact that $\psi$-classes in genus 1 are boundary classes, it follows that the pullback of any such decorated graph in $S_\omega^*$ to any boundary stratum is a sum of decorated graphs of the same form.  In particular, $S_{\omega}^*$ is closed under pullback to the boundary, as required.
 \end{proof}

\begin{rem} 
More generally, the Hodge groups $H^{k,0}(\Mb_{1,k})$ correspond to the space of cusp forms for $\SL_2(\zz)$ of weight $k+1$. Essentially the same argument shows that the STE generated by any subset of these cusp form spaces is closed under all of the tautological operations.
\end{rem}

\subsubsection{Genus 2}  
Here, we summarize what we need about the pure weight cohomology of $\M_{2,n}$ in low degrees, from \cite{PetersenTommasi,Petersen}. 

\begin{prop}\label{genus2even}
    Let $k\leq 10$. Then $W_{2k} H^{2k}(\M_{2,n})=RH^{2k}(\M_{2,n})$.
\end{prop}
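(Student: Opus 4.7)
The plan is to reduce the statement to Petersen's computation of the cohomology of $\M_2$ with coefficients in symplectic local systems, via a Leray spectral sequence argument analogous to the one used in genus $1$ in the proof of Proposition~\ref{genus1weightk}.

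First, I would set up the Leray spectral sequence for the forgetful map $\pi_{\mathrm{all}}: \M_{2,n} \to \M_2$. The fibers are open subsets of products of the universal genus~$2$ curve, and $R^q\pi_{\mathrm{all},*}\mathbb{Q}$ decomposes into $\Sp_4$-equivariant local systems. Contributions from the section loci (where marked points coincide or specialize to a Weierstrass point) are handled by Gysin pushforward, exactly as in the argument around \eqref{wkres}. After this reduction, what must be controlled is the pure cohomology $W_p H^p(\M_2, V_{l,m})$ for symplectic local systems $V_{l,m}$ indexed by dominant weights $(l,m)$.

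Second, I would invoke Petersen's calculation \cite{Petersenlocalsystems}, which expresses this pure cohomology in terms of spaces of vector-valued Siegel cusp forms for $\Sp_4(\mathbb{Z})$. For the trivial local system $V_{0,0}$, the cohomology $H^*(\M_2, \mathbb{Q})$ is generated by $\lambda_1$, hence tautological. For nontrivial $(l,m)$, the dimension data for spaces of Siegel modular forms for $\Sp_4(\mathbb{Z})$ imply that no such local system contributes to cohomology in degrees $2k \leq 20$ on $\M_{2,n}$; the first non-tautological contribution appears only at $2k = 22$. Combining these inputs, only tautological classes enter $W_{2k}H^{2k}(\M_{2,n})$ for $k \leq 10$.

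The main technical obstacle is the careful bookkeeping of which dominant weights $(l,m)$ of $\Sp_4$ appear in $R^q\pi_{\mathrm{all},*}\mathbb{Q}$ for $q \leq 20$, and matching these against the (very few) weights for which nonzero spaces of Siegel cusp forms for $\Sp_4(\mathbb{Z})$ exist. That this bookkeeping produces exactly the cutoff $k = 10$ is confirmed by Petersen--Tommasi \cite{PetersenTommasi}, who exhibit a non-tautological class in $W_{22}H^{22}(\M_{2,n})$ for $k = 11$ and $n \geq 20$, showing the bound is sharp. Alternatively, the entire proposition can be extracted directly from Petersen's determination of the tautological ring of $\M_{2,n}$ in \cite{Petersen}, where the same calculation is carried out in detail.
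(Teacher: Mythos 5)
Your main plan (Leray spectral sequence for $\M_{2,n} \to \M_2$, decomposition into symplectic local systems, and Petersen's local system calculation) is a genuinely different route from the paper's. The paper's proof is much shorter: it directly cites \cite[Theorem 3.8]{Petersen}, which says $H^{2k}(\Mb_{2,n}) = RH^{2k}(\Mb_{2,n})$ for $n < 20$, restricts to $\M_{2,n}$ to get the claim for $n < 20$, cites \cite[Remark 3.10]{Petersen} for $n = 20$, and then handles $n > 20$ by invoking Lemma~\ref{blue}(b) (stated in the next section of the paper), which shows that for $n > 2k$ all of $W_{2k}H^{2k}(\M_{2,n})$ is pulled back along forgetful maps from $\M_{2,m}$ with $m \leq 20$. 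Your last paragraph, suggesting one can extract the proposition directly from Petersen's determination of the tautological ring, is essentially the paper's approach, but it omits the $n > 20$ reduction: Petersen's Theorem 3.8 does not cover $n > 20$, and some argument like Lemma~\ref{blue}(b) is genuinely needed there.

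There is also an imprecision in your main plan that is worth flagging. The assertion that ``no such local system contributes to cohomology in degrees $2k \leq 20$'' is not literally true. Nontrivial local systems $V_{l,m}$ do contribute to the relevant pure cohomology groups $W_{2k}H^{2k-|\lambda|}(\M_2, \mathbb{V}_\lambda)$ — via Eisenstein cohomology and Tate-type endoscopic classes — and the correct statement is that these contributions are tautological for $k \leq 10$, not that they vanish. Establishing this requires more than the dimension data for spaces of Siegel cusp forms: one needs Petersen's detailed description of how Eisenstein, Saito--Kurokawa, and cuspidal contributions enter, and how they interact with the tautological ring. In effect your Leray argument would re-derive a large part of \cite{Petersen}, which is why the paper opts simply to cite the $\Mb_{2,n}$ result from that reference and restrict. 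If you do pursue the Leray route, you also need to be careful to work with the fiber power $\C^n$ of the universal curve (as in Lemma~\ref{blue}), not with $\M_{2,n} \to \M_2$ directly, so that the K\"unneth decomposition and the diagonal/$\psi$-class corrections are available.
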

\begin{proof}
    When $n<20$, we have $H^{2k}(\Mb_{2,n})=RH^{2k}(\Mb_{2,n})$ by \cite[Theorem 3.8]{Petersen}. Hence, $W_{2k} H^{2k}(\M_{2,n})=RH^{2k}(\M_{2,n})$ by restriction. For $n=20$ and $k\leq 10$, the same result holds (see \cite[Remark 3.10]{Petersen}).
    For $n>20$ and $k \leq 10$, all of $W_{2k} H^{2k}(\M_{2,n})$ is pulled back from $W_{2k} H^{2k}(\M_{2,m})$ where $m\leq 20$ (see Lemma \ref{blue}, below). Because the tautological ring is closed under forgetful pullbacks, the lemma follows.
\end{proof}

Applying Lemma \ref{fc} to the STE $RH^*$ immediately implies Conjecture \ref{lowk} for $g = 2$. 
\begin{cor}\label{g2lowdegreetaut}
If $k \leq 10$, then $H^{2k}(\Mb_{2,n}) = RH^{2k}(\Mb_{2,n})$ for all $n$.
\end{cor}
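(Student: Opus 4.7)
The plan is to apply Lemma~\ref{fc} to the STE $S^*=RH^*$ at target $(g,n,k) = (2,n,2k)$ for each $k \leq 10$ and each $n$, inducting on $(g,n)$. Since $RH^{k'}(\Mb_{g',n'})$ restricts into the denominator of the codomain, the surjectivity hypothesis reduces to the vanishing of
\[
Q_{g',n',k'} := W_{k'}H^{k'}(\M_{g',n'})\big/\bigl(\Phi^{k'}_{g',n'} + RH^{k'}(\M_{g',n'})\bigr)
\]
at each triple in the inductive range.

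At the top case $(2,n,2k)$ the vanishing of $Q$ is precisely Proposition~\ref{genus2even}. For smaller triples $(g',n',k')$ satisfying \eqref{ih} I would argue case by case: the $g'=0$ case is trivial; for $g'=1$, Proposition~\ref{genus1weightk} shows that $W_{k'}H^{k'}(\M_{1,n'})$ is pulled back from $\M_{1,k'}$ whenever $n'>k'$ and hence lies in $\Phi^{k'}_{1,n'}$, and vanishes when $n'<k'$; for $g'=2$ with $k'$ even, Proposition~\ref{genus2even} applies again, and for $k'$ odd one appeals to Petersen's classification of pure genus-two cohomology in low degrees.

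The main obstacle is the residual case $(g',n',k')=(1,k',k')$ with $\mathsf{S}_{k'+1}\neq 0$, i.e.\ $k'\in\{11,15,17\}$ within the range $k'\leq 2k-2\leq 18$, together with analogous cusp-form-driven failures on $\M_{2,n'}$ such as $W_{17}H^{17}(\M_{2,14})\neq 0$. At these triples $Q$ contains a genuinely non-tautological piece, so the hypothesis of Lemma~\ref{fc} literally fails for $RH^*$. To bypass this, I would trace through the proof of Lemma~\ref{fc} and note that such a failure at $(\gamma,\nu,\ell)$ only matters insofar as $H^\ell(\Mb_{\gamma,\nu})$ contributes to the K\"unneth decomposition of $H^{2k-2}(\tilde{\partial\M}_{2,n})$; in every boundary stratum of $\Mb_{2,n}$ containing such a factor, the paired K\"unneth factor lives in odd degree $2k-2-\ell$, which is at most $7$ for $k\leq 10$ and $\ell\geq 11$.

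The closing step is the vanishing $H^\ell(\Mb_{g,m})=0$ for odd $\ell\leq 9$ and $g\leq 2$: in genus $0$ this is trivial, in genus $1$ it follows by a short induction from Proposition~\ref{genus1weightk} together with $\mathsf{S}_{\ell+1}=0$ for $\ell\in\{1,3,5,7,9\}$, and in genus $2$ it follows inductively from the corresponding pure vanishing in Petersen's classification. Thus the problematic K\"unneth components vanish, the proof of Lemma~\ref{fc} goes through for $S^*=RH^*$, and one concludes $H^{2k}(\Mb_{2,n})=RH^{2k}(\Mb_{2,n})$ for all $k\leq 10$ and $n$.
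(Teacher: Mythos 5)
Your proof is correct, and it actually patches a small gap that the paper's one-line proof glosses over. The paper asserts that Corollary~\ref{g2lowdegreetaut} follows ``immediately'' from Lemma~\ref{fc} applied to $S^*=RH^*$, but, as you correctly notice, the literal hypothesis of Lemma~\ref{fc} fails for $RH^*$: for $n$ large enough (e.g.\ $n\geq 16$ at $k=10$), the triple $(g',n',k')=(1,17,17)$ satisfies \eqref{ih}, and there the quotient $W_{17}H^{17}(\M_{1,17})/\bigl(\Phi^{17}_{1,17}+RH^{17}(\M_{1,17})\bigr)\cong \mathsf{S}_{18}$ is nonzero while $RH^{17}(\Mb_{1,17})=0$, so the required surjectivity cannot hold.

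Your fix is exactly the parity observation needed to salvage the inductive scheme: whenever the offending factor $H^{\ell}(\Mb_{\gamma,\nu})$ with $\ell\geq 11$ appears in the K\"unneth decomposition of $H^{2k-2}(\tilde{\partial\M}_{2,n})$, the complementary factor sits in odd degree $2k-2-\ell\leq 7$, and $H^{j}(\Mb_{\gamma',\nu'})=0$ for odd $j\leq 9$, so the entire K\"unneth term is zero. Likewise, a genus-$2$ vertex in a one-edge stable graph of total genus $2$ is always paired with a genus-$0$ vertex (which has no odd cohomology at all), so the genus-$2$ odd-weight failures such as $W_{17}H^{17}(\M_{2,14})\neq 0$ also never contribute. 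In short, your strategy is the same as the paper's --- Lemma~\ref{fc} with the pure-weight input from Proposition~\ref{genus2even} --- but you supply the parity bookkeeping that makes the boundary induction actually close for $S^*=RH^*$, which the paper's ``immediately implies'' phrasing elides.
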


In odd degrees, there are restrictions on the possible motivic structures coming from the cohomology of local systems on the moduli space of principally polarized abelian surfaces.

\begin{prop} \label{danA2}
In the category of Galois representations, the pure weight cohomology of $\M_{2,n}$ in degrees 13 and 15 is of the form
\[
W_{13}H^{13}(\M_{2,n})^\semis \cong \bigoplus \mathsf{L}\mathsf{S}_{12} \mbox{ \ \ and \ \ } W_{15}H^{15}(\M_{2,n})^\semis \cong \bigoplus \mathsf{L}^2\mathsf{S}_{12}.
\]
\end{prop}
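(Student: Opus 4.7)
The plan is to reduce to the known classification of the pure weight cohomology of local systems on $\A_2$, following the hint in the remark preceding the proposition.

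First, I would apply the Leray spectral sequence to the forgetful morphism $\pi_n \colon \M_{2,n} \to \M_2$. The higher direct image $R^q \pi_{n,*} \qq$ admits a filtration (in the category of variations of Hodge structure and $\ell$-adic local systems) whose graded pieces are Tate twists of tensor powers of $\mathbb{V} := R^1 f_* \qq$, where $f \colon \M_{2,1} \to \M_2$ is the universal curve. Consequently, $W_k H^k(\M_{2,n})^\semis$ decomposes into a direct sum of Tate twists of pieces of the form $W_{k-2b} H^{p}(\M_2, \mathbb{V}^{\otimes a})^\semis$ with $p + a + 2b = k$.

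Second, since the Torelli map $t \colon \M_2 \hookrightarrow \A_2$ is an open immersion with complement $\A_{1,1}/\ss_2$, the associated Gysin sequence relates the pure weight cohomology of $\mathbb{V}^{\otimes a}$ on $\M_2$ to that on $\A_2$, with the boundary correction terms reducing via the K\"unneth formula to pure weight cohomology of local systems on $\A_1 \cong \M_{1,1}$. By Eichler--Shimura, these corrections involve only elliptic cusp forms for $\SL_2(\zz)$.

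Third, I would invoke the classification of the pure weight Galois representations appearing in the inner cohomology of an irreducible symplectic local system $\mathbb{V}_{l,m}$ on $\A_2$, which is concentrated in middle degree $3$ and is known (see \cite{Petersenlocalsystems} and the references therein) to decompose into contributions from vector-valued Siegel cusp forms, all of whose motivic weights lie far above $15$, together with endoscopic contributions of Saito--Kurokawa and Yoshida type that involve only Tate twists of elliptic cusp forms $\mathsf{S}_w$. Since $\dim \mathsf{S}_w = 0$ for $w < 12$ and only $\s_{12}$ is of odd motivic weight $\leq 15$, tracking weights and Tate twists through Steps 1 and 2 forces $W_{13} H^{13}(\M_{2,n})^\semis \cong \bigoplus \lstw$ and $W_{15} H^{15}(\M_{2,n})^\semis \cong \bigoplus \l^2 \s_{12}$.

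The main obstacle will be Step 3: ruling out Yoshida-type contributions whose two elliptic factors have weights summing to at most $15$, and verifying that the Tate twists contributed by the Leray filtration combine with the endoscopic twists to produce exactly one factor of $\l$ in degree $13$ and two factors in degree $15$, rather than some other Tate twist of $\s_{12}$.
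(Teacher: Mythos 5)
Your proposal matches the paper's own proof sketch in all essentials: Leray spectral sequence for the forgetful map to $\M_2$ (with the configuration-space filtration on the higher direct images giving tensor powers of $\mathbb{V}$ up to Tate twist), the Gysin sequence for the Torelli open immersion $\M_2 \hookrightarrow \A_2$ with complement controlled by $\A_1$, and Petersen's classification \cite{Petersenlocalsystems} to rule out anything other than Tate twists of $\s_{12}$ in motivic weight $\leq 15$. The ``obstacle'' you flag at the end is in fact already dispatched by your own observation that $\dim\s_w = 0$ for $w < 12$: a Yoshida-type contribution would need two elliptic cusp forms of weight $\geq 12$, placing it far above motivic weight $15$.
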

\begin{proof}
    For every $k$, the surjection
    \[
    H^k(\Mb_{2,n})\rightarrow W_{k} H^k(\M_{2,n})
    \]
    factors through $W_k H^k(\M_{2,n}^{\ct})$. Here, $\M_{2,n}^{\ct}$ is the moduli space of genus $2$ curves of compact type with $n$ markings. By \cite[Theorem 2.1(i) and (ii)]{Petersen}, 
    \[
    H^k(\M_{2,n}^{\ct})\cong \bigoplus_{p+q=k} H^p(\M_{2}^{\ct},A^q)\oplus H^p(\Sym^2 \M_{1,1}, B^q),
    \]
    where $A^q$ and $B^q$ are local systems of weight $q$, given by direct sums of Tate twists of symplectic local systems. Moreover, the terms $H^p(\Sym^2 \M_{1,1},B^q)$ map to zero under restriction to $H^k(\M_{2,n})$ by \cite[Lemma 3.3]{Petersen}.
    
    We now consider the terms $H^p(\M_{2}^{\ct},A^q)$. Let $\mathbb{V}$ be a Tate twist of a symplectic local system of weight $q$ on $\M_2^{\ct}$. If $q$ is odd, then $H^p(\M_{2}^{\ct},\mathbb{V})$ vanishes because the hyperelliptic involution acts on the fibers of $\mathbb{V}$ by $(-1)^q$. When $q$ is even, the possible Galois representations appearing in $H^p(\M_{2}^{\ct},\mathbb{V})^{\semis}$ are determined by \cite[Theorem 2.1]{Petersenlocalsystems}. When $p+q=13$ (respectively, $p+q=15)$, the only possibility of pure weight is $\mathsf{L}\mathsf{S}_{12}$ (respectively, $\mathsf{L}^2\mathsf{S}_{12})$.
\end{proof}

\begin{cor} \label{hodge13g2}
On $\Mb_{2,n}$, in the category of Galois representations, we have
\[H^{13}(\Mb_{2,n})^{\semis} \cong \bigoplus \lstw \mbox{ \ \ and \ \ }  
H^{15}(\Mb_{2,n})^{\semis} \cong  \bigoplus \l^2 \mathsf{S}_{12}.
\]
\end{cor}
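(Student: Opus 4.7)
The plan is to combine the right exact sequence
\[ H^{k-2}(\tilde{\partial \M}_{2,n}) \to H^{k}(\Mb_{2,n}) \to W_k H^k(\M_{2,n}) \to 0 \]
for $k = 13$ and $k = 15$ with Proposition~\ref{danA2}, which already identifies the open part $W_k H^k(\M_{2,n})^{\semis}$ with a direct sum of copies of $\mathsf{L}^{(k-11)/2}\mathsf{S}_{12}$. Since the simple constituents of the semisimplification of the middle term are among those of the ends, and the Gysin pushforward along the codimension-one gluing map carries an intrinsic Tate twist by $\mathsf{L}$, the task reduces to showing that $H^{k-2}(\tilde{\partial \M}_{2,n})^{\semis}$ is a direct sum of copies of $\mathsf{L}^{(k-13)/2}\mathsf{S}_{12}$.

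The normalized boundary is a disjoint union of (finite quotients of) $\Mb_{1, n+2}$ and products $\Mb_{g_1, n_1+1} \times \Mb_{g_2, n_2+1}$ with $g_1 + g_2 = 2$, all of whose invariants are strictly below $(2, n)$ in the partial order $\prec$. I would induct on $\prec$, using as inputs: the vanishing $H^j(\Mb_{g,n}) = 0$ for odd $j \leq 9$ on all $\Mb_{g,n}$ established in \cite{BergstromFaberPayne}; the identification $H^{11}(\Mb_{g,n})^{\semis} \cong \bigoplus \mathsf{S}_{12}$ from \cite{CanningLarsonPayne}; and purity together with the tautological nature of even cohomology in degrees $\leq 12$ for $g \leq 2$ (classical for $g = 0$, see \cite{Petersengenus1} for $g = 1$, and Corollary~\ref{g2lowdegreetaut} for $g = 2$). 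For the $k = 15$ case, I also need $H^{13}(\Mb_{1, m})^{\semis} \cong \bigoplus \lstw$. This follows from Proposition~\ref{genus1weightk}, which gives $W_{13}H^{13}(\M_{1,m}) = 0$ since $\mathsf{S}_{14} = 0$, combined with Corollary~\ref{1ns} and the observation that every class in $S^*_\omega$ of degree $11 + 2j$ has Galois semisimplification of type $\mathsf{L}^j\mathsf{S}_{12}$, since $S^*_\omega$ is built from $\omega \in H^{11}(\Mb_{1,11})$ by pullbacks (which preserve Galois type), multiplication by tautological classes (which applies Tate twists), and codimension-one Gysin pushforwards (which also twist by $\mathsf{L}$).

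The K\"unneth formula applied to $H^{k-2}$ of each boundary product then forces each odd-degree K\"unneth factor into a degree where the only available cohomology is $\mathsf{S}_{12}$-type (for $k=13$) or $\mathsf{S}_{12}$- and $\mathsf{L}\mathsf{S}_{12}$-type (for $k=15$), with the complementary even-degree factor pure Tate. Every such summand therefore semisimplifies to $\mathsf{L}^j\mathsf{S}_{12}$ for the appropriate $j$, yielding the desired shape for the boundary cohomology. The main obstacle is the K\"unneth bookkeeping: ensuring no exotic simple Galois representation slips in through some unanticipated product of odd-degree factors. This is guaranteed by the vanishing of odd cohomology in degrees $\leq 9$, which prevents two odd-degree factors from pairing in the range of interest, so all cases reduce to at most one odd K\"unneth factor in degree $11$ or $13$.
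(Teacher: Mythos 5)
Your proposal is correct and takes essentially the same route as the paper's proof: use the right exact sequence relating $H^{k-2}$ of the normalized boundary, $H^k(\Mb_{2,n})$, and $W_kH^k(\M_{2,n})$; invoke Proposition~\ref{danA2} for the open part; and handle the boundary via K\"unneth, the vanishing of odd cohomology in degrees $\le 9$, the degree $11$ result from \cite{CanningLarsonPayne}, and (for $k=15$) the degree $13$ results in genus $\le 2$. The Tate twist from the Gysin pushforward is the same observation the paper makes implicitly. Your write-up just spells out the K\"unneth bookkeeping that the paper compresses into a single sentence.

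One small inefficiency: your route to $H^{13}(\Mb_{1,m})^{\semis}\cong\bigoplus\lstw$ via Corollary~\ref{1ns} and a general ``observation'' about degree-$(11+2j)$ classes in $S^*_\omega$ is more roundabout than needed, and the observation as stated is not literally true for large $j$ (in high even degree, products of several $\omega$-decorated classes can produce non-Tate-twist-of-$\mathsf{S}_{12}$ constituents). It is fine in the specific degree $13$ you need, but a cleaner path is to cite Getzler's computation directly --- the paper itself quotes $H^{13}(\Mb_{1,n}) = \Hom(\lstw, H^{13}(\Mb_{1,n}))\otimes\lstw$ from \cite{Getzler} before Lemma~\ref{112} --- or to run the same open/boundary sequence one more time in genus $1$, where $W_{13}H^{13}(\M_{1,m})=0$ forces everything to come from $H^{11}$ of the boundary.
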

\begin{proof}
For degree 13, consider the right exact sequence
 \[
    H^{11}(\tilde{\partial \M_{2,n}}) \rightarrow H^{13}(\Mb_{2,n})\rightarrow W_{13}H^{13}(\M_{2,n})\rightarrow 0.
    \]
By \cite{CanningLarsonPayne},  the semi-simplification of the left-hand side is a sum of terms $\mathsf{S}_{12}$.  Proposition \ref{danA2} shows that the semi-simplification of the right-hand side consists only of Tate twists of $\mathsf{S}_{12}$. Thus, the semi-simplification of the middle term does too.

Similarly, for degree $15$, we consider the right exact sequence
 \[
    H^{13}(\tilde{\partial \M_{2,n}}) \rightarrow H^{15}(\Mb_{2,n})\rightarrow W_{15}H^{15}(\M_{2,n})\rightarrow 0.
    \]
The boundary divisors on $\Mb_{2,n}$ are finite quotients of products of moduli spaces for genus at most $2$.
In particular, by the K\"unneth formula and vanishing of odd cohomology in degrees less than or equal to $9$, we see that the left-hand side is a sum of terms of the form $H^{13}(\Mb_{g',n'})$ or
$H^2(\Mb_{g_1,n_1}) \otimes H^{11}(\Mb_{g_2,n_2})$ where $g', g_1, g_2 \leq 2$. By the degree 13 result just proved, the semi-simplification of terms of the first kind is a sum of $\mathsf{L}\mathsf{S}_{12}$. Meanwhile, we know $H^2(\Mb_{g_1,n_1})$ is pure Tate by \cite{ArbarelloCornalba} and $H^{11}(\Mb_{g_2, n_2})^{\semis}$ is a sum of terms $\mathsf{S}_{12}$ by \cite{CanningLarsonPayne}. Hence, the semi-simplification of terms of the second kind is also a sum of $\mathsf{L}\mathsf{S}_{12}$. To conclude, note that Proposition \ref{danA2} shows that the semi-simplification of the right-hand side consists only of Tate twists of $\mathsf{S}_{12}$, so the semi-simplification of the middle term does too.
\end{proof}

\begin{rem}
    See Lemma \ref{8.1} for an analogous result in the category of Hodge structures.
\end{rem}

\section{Finite generation}\label{toprestrictions}

Let $S^*$ be an STE.  By Lemma~\ref{fc}, $S^k(\Mb_{g,n})$ contains $H^k(\Mb_{g,n})$ for all $g$ and $n$ if and only if it surjects onto $W_kH^k(\M_{g,n})/ \big(\Phi^k_{g,n} + RH^k(\M_{g,n}) \big)$ for all $g$ and $n$.  
Below we give a sufficient criterion for the vanishing of $W_kH^k(\M_{g,n})/ \Phi^k_{g,n}.$
The argument is similar to the proof of Proposition~\ref{genus1weightk}, using the K\"unneth decomposition of the Leray spectral sequence for the $n$-fold fiber product of the universal curve $\C^n \to \M_g$.

We are grateful to Dan Petersen for explaining how an earlier version of this lemma could be strengthened to the version presented here. 
This stronger version will also be useful for controlling some additional base cases in Section \ref{h13sec}.
To state it, we define the subspace
\[\Psi_{g,n}^k := \psi_1\Phi^{k-2}_{g,n} + \cdots + \psi_n \Phi^{k-2}_{g,n} \subset W_k H^k(\M_{g,n}).\]
In other words, $\Psi^k_{g,n}$ is generated by pullbacks from moduli spaces with fewer markings multiplied with $\psi$-classes. Given 
a $g$-tuple of integers $\lambda = (\lambda_1 \geq \lambda_2 \geq \cdots \geq \lambda_g \geq 0)$, let $\mathbb{V}_\lambda$ be the associated symplectic local system on $\M_g$, as in \cite{PTY}. Let $|\lambda| = \lambda_1 + \cdots + \lambda_g$, which is the weight of the local system $\mathbb{V}_\lambda$. When $|\lambda| = n$, as in the previous section, we write $V_{\lambda}$ for the irreducible $\mathbb{S}_n$ representation corresponding to $\lambda$.

\begin{lem} \label{blue} (a) If $g \geq 2$ then
\[W_kH^k(\M_{g,n})/(\Phi_{g,n}^k + \Psi_{g,n}^k) \cong \bigoplus_{|\lambda| = n} W_k H^{k - n} (\M_g, \mathbb{V}_{\lambda}) \otimes V_{\lambda^T}.\]
(b) Moreover, if $n > 0$ and $n \geq k$ then $W_kH^k(\M_{g,n}) = \Phi^k_{g,n}$.
\end{lem}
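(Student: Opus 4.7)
The plan is to mimic the proof of Proposition~\ref{genus1weightk}, replacing the universal elliptic curve with the universal curve $\pi\colon\C \to \M_g$ and working with its $n$-fold relative product $f\colon\C^n \to \M_g$. From the open embedding $\M_{g,n} \hookrightarrow \C^n$, whose complement is the big diagonal $\bigcup_{i<j}\Delta_{ij}$, one has the right exact sequence
\[\bigoplus_{i<j} W_{k-2}H^{k-2}(\C^{n-1}) \xrightarrow{\sum\Delta_{ij*}} W_kH^k(\C^n) \to W_kH^k(\M_{g,n}) \to 0.\]
Since $f$ is smooth and proper, the Leray spectral sequence degenerates and, combined with K\"unneth, yields
\[W_kH^k(\C^n) = \bigoplus_{p+q=k}\bigoplus_{i_1+\cdots+i_n=q} W_pH^p\bigl(\M_g,\,R^{i_1}\pi_*\qq \otimes \cdots \otimes R^{i_n}\pi_*\qq\bigr),\]
with $R^0\pi_*\qq = \qq$, $R^1\pi_*\qq = \mathbb{V}$, and $R^2\pi_*\qq = \qq(-1)$.

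I would then classify each K\"unneth type $(i_1,\ldots,i_n) \in \{0,1,2\}^n$ based on where it lands after restriction to $\M_{g,n}$. A type with some $i_j = 0$ is pulled back via the forgetful projection $q_j\colon\C^n \to \C^{n-1}$, so restricts into $\pi_j^*W_kH^k(\M_{g,n-1}) \subset \Phi^k_{g,n}$. A type with all $i_j \geq 1$ and some $i_j = 2$ factors as $\tilde\eta_j \cdot q_j^*(\cdots)$, where $\tilde\eta_j$ lifts the generator of $R^2\pi_*\qq$ at position $j$; since $g\geq 2$, $\tilde\eta_j|_{\M_{g,n}}$ is a scalar multiple of $\psi_j$ modulo pullbacks from $\M_g$, placing these summands in $\Phi^k_{g,n} + \Psi^k_{g,n}$. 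The remaining type, $i_j = 1$ for all $j$, gives $W_kH^{k-n}(\M_g,\mathbb{V}^{\otimes n})$. For Part~(a), it remains to identify the image of $\sum\Delta_{ij*}$ inside this all-$1$ piece. The diagonal class admits a Leray--K\"unneth decomposition $[\Delta_{ij}] = \tilde\eta_i + \tilde\eta_j + \sigma_{ij}$, where $\sigma_{ij}$ represents the symplectic pairing inserted at positions $(i,j)$, i.e., the image of $\qq(-1) \to \mathbb{V}_i \otimes \mathbb{V}_j$ dual to cup product. Modulo the $\tilde\eta$-terms already in $\Phi + \Psi$, pushforward by $\Delta_{ij*}$ acts on the all-$1$ piece as multiplication by $\sigma_{ij}$, generating all symplectic contractions of pairs of tensor factors. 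Symplectic Schur--Weyl duality then identifies the cokernel as $\bigoplus_{|\lambda|=n}\mathbb{V}_\lambda \otimes V_{\lambda^T}$, yielding the claimed isomorphism.

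For Part~(b), under $n>0$ and $n\geq k$, the conditions $\sum i_j \leq k$ and $i_j \geq 1$ for every $j$ force $n = k$, $p=0$, and $i_j = 1$ for all $j$. So the only potentially non-$\Phi^k$ contribution is $H^0(\M_g,\mathbb{V}^{\otimes n})$, the $\Sp_{2g}$-invariants, which vanish for $n$ odd and for $n$ even are spanned by perfect-matching tensors. The cohomology class in $H^n(\C^n)$ attached to a matching $M$ is $\prod_{(i,j)\in M}\sigma_{ij}$. Using $[\Delta_{ij}]|_{\M_{g,n}} = 0$, this restricts to a polynomial in the $\psi_i$ and classes pulled back from $\M_g$, of total $\psi$-degree at most $n/2 < n$. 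Each resulting monomial involves fewer than $n$ distinct $\psi$-indices, so picking some $i_0 \in \{1,\ldots,n\}$ outside them lets me rewrite the monomial as $\pi_{i_0}^*$ of a class in $H^k(\M_{g,n-1})$, which lies in $\Phi^k_{g,n}$. The main obstacle is the clean identification in Part~(a) of the image of $\sum\Delta_{ij*}$ with the symplectic contraction ideal in $\mathbb{V}^{\otimes n}$, which requires carefully tracking the K\"unneth components of each diagonal class and invoking Schur--Weyl duality to pin down the multiplicity space as $V_{\lambda^T}$.
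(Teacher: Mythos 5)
Your proposal follows essentially the same approach as the paper: open embedding $\M_{g,n}\hookrightarrow \C^n$, the Leray spectral sequence for $f\colon\C^n\to\M_g$ together with the K\"unneth decomposition of $R^qf_*\qq$ into types $(i_1,\dots,i_n)\in\{0,1,2\}^n$, sorting types into $\Phi$ (some $i_j=0$), $\Psi$ (some $i_j=2$), and the all-one piece, identifying the image of the Gysin maps from the diagonals as symplectic contractions, and invoking symplectic Schur--Weyl duality to get $\bigoplus_{|\lambda|=n}\mathbb{V}_\lambda\otimes V_{\lambda^T}$. Part~(a) lines up closely with the paper, which streamlines the bookkeeping for the projectors and the Schur--Weyl step by citing Petersen--Tommasi--Yin; your narrative account of the same steps is correct as far as it goes, though one should be a bit more careful than ``$\Delta_{ij*}$ acts as multiplication by $\sigma_{ij}$'' (the precise statement, via the projection formula and surjectivity of $\Delta_{ij}^*$, is that the image of $\Delta_{ij*}$ in the all-one piece is the image of the insertion map $\mathbb{V}^{\otimes(n-2)}\to\mathbb{V}^{\otimes n}$). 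Where you genuinely diverge is Part~(b): the paper first observes that $n\geq k$ forces $\tilde\Psi\subset\tilde\Phi$, then applies the isomorphism of Part~(a) and notes that $H^{k-n}(\M_g,\mathbb{V}_\lambda)=0$ either for degree reasons ($n>k$) or because $\mathbb{V}_\lambda$ with $|\lambda|=n>0$ is a nontrivial irreducible local system so $H^0(\M_g,\mathbb{V}_\lambda)=0$. You instead avoid Part~(a) and argue directly with the perfect-matching spanning set of $(\mathbb{V}^{\otimes n})^{\Sp}$, restricting $\prod_{(i,j)\in M}\sigma_{ij}$ to $\M_{g,n}$ via $[\Delta_{ij}]|_{\M_{g,n}}=0$ and then counting $\psi$-indices to find a marking $i_0$ not involved; this is a valid alternative, slightly more hands-on and requiring the (true but unstated) facts that $\psi_j=\pi_{i_0}^*\psi_j$ on the open moduli space and that classes from $\M_g$ pull back through $\pi_{i_0}$. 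The paper's route is cleaner since it reuses Part~(a), but yours works.
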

\begin{proof}
(a) Let $\pi \colon \C \to \M_g$ be the universal curve.
There is an open inclusion $\M_{g,n} \subset \C^n$, and hence restriction gives a surjection from $W_kH^k(\C^n)$ to $W_kH^k(\M_{g,n})$. Given a subset $A \subset \{1, \ldots, n\}$, let $\C^n \to \C^A$ be the projection onto the factors indexed by $A$. We consider the following three subspaces of $H^k(\C^n)$:

 \begin{enumerate}
     \item[(i)] $\tilde{\Phi}$, the span of the pullbacks of $H^k(\C^{\{i\}^c})$ along projection $\C^n \to \C^{\{i\}^c}$
     \item[(ii)] $\tilde{\Psi}$, the span of $\psi_i \cdot H^{k-2}(\C^{\{i\}^c})$
     \item[(iii)] $\tilde{\Delta}$, the span of $\Delta_{ij} \cdot H^{k-2}(\C^{\{i, j\}^c})$, where $\Delta_{ij}$ denotes the pullback of the class of the diagonal in $H^2(\C^{\{i,j\}})$.
 \end{enumerate}
Note that $\tilde{\Delta}$ lies in the kernel of $H^k(\C^n) \to H^k(\M_{g,n})$, and
the images of the weight $k$ parts of $\tilde{\Phi}$ and $\tilde{\Psi}$ under $W_kH^k(\C^n) \to W_kH^k(\M_{g,n})$ are the subspaces $\Phi^k_{g,n}$ and $\Psi^k_{g,n}$ respectively.
To prove part (a), it thus suffices to show that
\begin{equation} \label{fora} H^k(\C^n)/(\tilde{\Phi} + \tilde{\Psi} + \tilde{\Delta}) \cong \bigoplus_{|\lambda|=n} H^{k-n}(\M_g, \mathbb{V}_{\lambda}) \otimes V_{\lambda^T}.
\end{equation}
We explain how \eqref{fora} follows from \cite{PTY}.

Since $f\colon \C^n \to \M_g$ is smooth and proper, the Leray spectral sequence degenerates at $E_2$, and by \cite[Proposition 2.16]{DeligneSpectral}, we have a direct sum decomposition 
\[H^k(\C^n) = \bigoplus_{p+q = k}H^p(\M_g, R^q f_*\qq). \]
Applying the K\"unneth formula, we obtain
\begin{equation} \label{kde} H^k(\C^n) = \bigoplus_{\substack{p + q = k \\i_1 + \ldots+ i_n = q }} H^p\left(\M_g, R^{i_1} \pi_* \qq \otimes \cdots \otimes R^{i_n} \pi_* \qq \right).
\end{equation}
As observed in \cite[Section 5.2.2]{PTY}, the subspace $\widetilde{\Phi}$ corresponds to the span of the summands where some $i_s = 0$. 
Meanwhile, modulo $\tilde{\Phi}$, the subspace $\tilde{\Psi}$ corresponds to the span of summands where some $i_s = 2$. This follows from the formulas for the projector $\pi_2$ (which projects onto such summands) in \cite[Section 5.1]{PTY}. Quotienting by $\tilde{\Phi} + \tilde{\Psi}$ thus leaves the term with $i_1 = \ldots = i_n = 1$. The local system $(R^1\pi_*\qq)^{\otimes n}$ corresponds to the $n$th tensor power of the standard representation of $\mathrm{Sp}_{2g}$. It thus decomposes into a direct sum of irreducible local systems as in \cite[Section 3.2]{PTY}.
As explained there, we have a natural projection
\begin{equation} \label{pl} (R^1\pi_*\qq)^{\otimes n} \to \bigoplus_{|\lambda| = n} \mathbb{V}_{\lambda} \otimes V_{\lambda^T}
\end{equation}
whose kernel is spanned by the image of the $\binom{n}{2}$ insertion maps
\[(R^1 \pi_* \qq)^{\otimes n -2} \to (R^1 \pi_* \qq)^{\otimes n}\]
given by inserting the class of the symplectic form. Modulo other K\"unneth components, the class of the diagonal $\Delta_{i,j}$ is the class of the symplectic form in $H^{1}(\C) \otimes H^1(\C) \subset H^2(\C^{\{i,j\}})$. Summarizing, we have found
\[H^k(\C^n)/(\widetilde{\Phi} + \widetilde{\Psi}) \cong H^{k-n}(\M_g, (R^1\pi_*\qq)^{\otimes n}) \to \bigoplus_{|\lambda| =n } H^{k-n}(\mathbb{V}_{\lambda}) \otimes V_{\lambda^T}, \]
and the kernel is spanned by $\tilde{\Delta}$. This establishes \eqref{fora}

(b) Now assume $n \geq k$. If some $i_s = 2$ and $i_1 + \cdots + i_n = k$, we must have some other $i_{s'} = 0$. It follows that
$\tilde{\Psi} \subset \tilde{\Phi}$ and hence $\Psi^k_{g,n} \subset \Phi^k_{g,n}$. Thus it suffices to see that the right-hand side of (a) vanishes when $n \geq k$ and $n > 0$. This is immediate when $n > k$. Meanwhile, for the case $n =k$, we have $H^0(\M_g, \mathbb{V}_{\lambda}) = 0$ since $\mathbb{V}_{\lambda}$ is nontrivial.
\end{proof}

We note a neat consequence of Lemma \ref{blue}(a) in genus $2$, which was also pointed out to us by Dan Petersen.

\begin{lem} \label{odd}
Suppose $n$ is odd. The STE generated by $H^k(\Mb_{2,n-1}), H^{k-2}(\Mb_{2,n-1})$
and $H^{k-2}(\Mb_{1,m})$ for $m \leq n+2$ contains $H^k(\Mb_{2,n})$.
\end{lem}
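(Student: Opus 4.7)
My plan is to apply Lemma \ref{fc} to the stated STE $S^*$, with $(g,n)=(2,n)$. The main case requires showing that the canonical map in \eqref{check} at $(g',n',k')=(2,n,k)$ is surjective, and I handle the smaller triples in parallel.

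For the main case at $(2,n,k)$, I would first apply Lemma \ref{blue}(a) to obtain
\[
W_kH^k(\M_{2,n})/(\Phi^k_{2,n}+\Psi^k_{2,n}) \cong \bigoplus_{|\lambda|=n} W_kH^{k-n}(\M_2, \mathbb{V}_\lambda) \otimes V_{\lambda^T}.
\]
The novel input is to show that each summand on the right vanishes for $n$ odd. Because every genus~$2$ curve is hyperelliptic, the hyperelliptic involution $\iota$ is an automorphism of every object of $\M_2$, so it acts trivially on $H^*(\M_2,-)$; on the other hand, since $\iota^{*}=-I$ on $H^1$ of each fiber, it acts as $(-1)^{|\lambda|}$ on $\mathbb{V}_\lambda$. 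For $|\lambda|=n$ odd these two actions are incompatible, forcing $H^*(\M_2,\mathbb{V}_\lambda)=0$. Hence $W_kH^k(\M_{2,n}) = \Phi^k_{2,n}+\Psi^k_{2,n}$. Both pieces lift into $S^k(\Mb_{2,n})$ by construction: $\Phi^k_{2,n}$ is a sum of pullbacks of classes in the generator $H^k(\Mb_{2,n-1})\subseteq S^*$, and $\Psi^k_{2,n}$ is spanned by products of tautological $\psi$-classes with pullbacks of classes in the generator $H^{k-2}(\Mb_{2,n-1})\subseteq S^*$. Thus the target of \eqref{check} at $(2,n,k)$ vanishes and surjectivity is immediate.

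For the smaller triples $(g',n',k')$ satisfying \eqref{ih}: the case $g'=0$ is trivial. For $g'=1$, Proposition~\ref{genus1weightk} reduces the target quotient to the cusp form space $\mathsf{S}_{k'+1}$, concentrated at $n'=k'$, and the constraint $(g',n')\prec(2,n)$ enforces $n'\leq n+2$, matching the range of the generators $H^{k-2}(\Mb_{1,m})$; together with $\mathsf{S}_{14}=0$ and the closure properties from Proposition~\ref{ts} and Corollary~\ref{1ns}, this handles the genus~$1$ contributions. For $g'=2$ with odd $n'<n$, the same hyperelliptic vanishing makes the surjectivity automatic; for even $n'<n$, Proposition~\ref{genus2even} addresses the even degrees $k'$, and the remaining cases can be reduced to even base cases via an induction on $n'$.

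The hard part is the odd-weight vanishing $H^*(\M_2,\mathbb{V}_\lambda)=0$ for $|\lambda|$ odd, which leverages the stacky automorphism structure specific to genus~$2$ curves and has no direct analogue in higher genus. The remaining bookkeeping enumerating the smaller triples is essentially routine once this vanishing is in place.
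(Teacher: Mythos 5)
Your core idea is exactly the one in the paper: apply Lemma~\ref{blue}(a) to reduce to pure cohomology of weight-$n$ local systems on $\M_2$, then observe that the hyperelliptic involution acts by $(-1)^n$, so all such cohomology vanishes when $n$ is odd. That part is spot-on. However, the framing through Lemma~\ref{fc} diverges from the paper and introduces a real gap. The paper does \emph{not} invoke Lemma~\ref{fc}: it simply notes that the given STE contains all boundary pushforwards, all forgetful pullbacks, and all products of $\psi$-classes with forgetful pullbacks, and then uses the right-exact sequence $H^{k-2}(\tilde{\partial\M_{2,n}})\to H^k(\Mb_{2,n})\to W_kH^k(\M_{2,n})\to 0$ together with $W_kH^k(\M_{2,n})=\Phi^k_{2,n}+\Psi^k_{2,n}$ to conclude directly.

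By contrast, Lemma~\ref{fc} demands surjectivity of \eqref{check} not just at $(2,n,k)$ but at all smaller triples satisfying \eqref{ih}, and that is where your argument is incomplete. The constraint $2d_{g',n'}-k'\le 2d_{g,n}-k$ together with $k'\le k-2$ forces you into a range of triples $(2,n',k')$ with $n'<n$ of \emph{both} parities and $k'$ of both parities. For $g'=2$, $n'$ even and $k'$ odd, the hyperelliptic vanishing does not help, Proposition~\ref{genus2even} only covers even $k'$, and the quotient $W_{k'}H^{k'}(\M_{2,n'})/(\Phi^{k'}_{2,n'}+RH^{k'}(\M_{2,n'}))$ need not vanish. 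Your stated generators ($H^k(\Mb_{2,n-1})$, $H^{k-2}(\Mb_{2,n-1})$, $H^{k-2}(\Mb_{1,m})$) do not obviously surject onto these quotients, and the sentence ``the remaining cases can be reduced to even base cases via an induction on $n'$'' is not an argument—it is precisely the content that needs to be supplied, and it is not clear it can be supplied without strengthening the hypotheses. The paper's direct approach avoids this entirely: it never needs to certify surjectivity for lower $(g',n',k')$ triples because it is not using the general criterion, only the single exact sequence for $\Mb_{2,n}$. If you want to keep your structure, you would need to either verify all the auxiliary triples (which is not routine) or switch to the paper's direct argument.
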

\begin{proof}
The STE generated by $H^k(\Mb_{2,n-1}), H^{k-2}(\Mb_{2,n-1})$
and $H^{k-2}(\Mb_{1,m})$ for $m \leq n+2$ contains all classes in $H^{k}(\Mb_{2,n})$ that are pushed forward from the boundary, pulled back from less marked points, or products of $\psi$ classes with classes pulled back from less marked points. It therefore suffices to show that $W_kH^k(\M_{2,n})/(\Phi^k_{2,n} + \Psi^k_{2,n}) = 0$.
By Lemma \ref{blue}(a), this quotient is a sum of the pure cohomology of local systems of weight $n$. But on $\M_2$, the hyperelliptic involution acts on local systems of weight $n$ by $(-1)^n$, so local systems of odd weight have no cohomology.
\end{proof}

\begin{figure}[h!]
    \centering
    \includegraphics[width=3in]{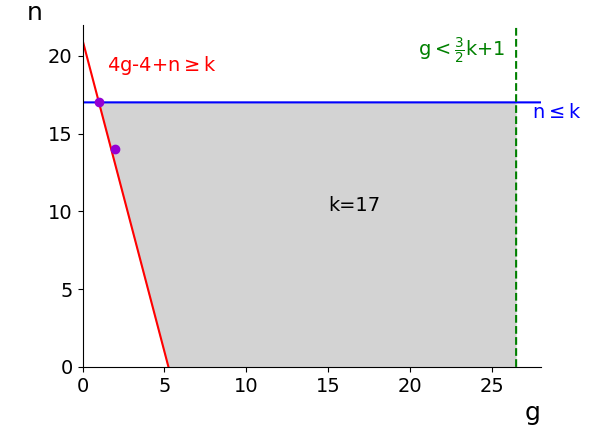}
    \caption{The argument in the proof of Theorem~\ref{thm:fgSTE} shows that $W_{17}H^{17}(\M_{g,n})/(\Phi^{17}_{g,n} + RH^{17}(\M_{g,n}))$ vanishes for $(g,n)$ outside the gray shaded region.  Note that this quotient does \emph{not} vanish for $(g,n)$ equal to $(1,17)$ and $(2,14)$, which are pictured by purple dots.}
    \label{fig:my_label}
\end{figure}

\begin{proof}[Proof of Theorem \ref{thm:fgSTE}]
Let $S^*$ be the STE generated by the cohomology groups listed in the statement of the theorem.
By Lemma \ref{fc}, it suffices to check that $S^*(\Mb_{g',n'})$ surjects onto $W_{k'}H^{k'}(\M_{g',n'})/\left(\Phi^{k'}_{g',n'} + RH^{k'}(\M_{g',n'})\right)$ for all $(g', n', k')$ with $k' \leq k-2$. 
By Lemma \ref{blue}(b), the target vanishes when $n' > k'$. 
By the vcd of $\M_{g',n'}$, it also vanishes when $k' > 4g' - 4 + n'$. Finally, $H^{k'}(\M_{g',n'})$ is tautological for  
$k' \leq \frac{2g'-2}{3}$ \cite{Wahl}.
\end{proof}

\begin{rem}
One can prove analogues of Theorem \ref{thm:fgSTE} and Lemma \ref{fc}  for homology. In particular, for each $k$, there is a finitely generated STE that contains $H_k(\Mb_{g,n})$ for all $g$ and $n$, with  explicit bounds on the generators. 
\end{rem}

\section{The Chow--K\"unneth generation Property} \label{agp}

In this section we prove a key lemma about the cycle class map for spaces that have the following property.

\begin{definition}
Let $X$ be a smooth algebraic stack of finite type over a field, stratified by quotient stacks. We say that $X$ has the \emph{Chow--K\"unneth generation Property} (CKgP) if for all algebraic stacks $Y$ (of finite type, stratified by quotient stacks) the exterior product
\[
A_*(X)\otimes A_*(Y)\rightarrow A_*(X\times Y)
\]
is surjective. 
\end{definition}

For convenience, we record here several properties of the CKgP, all of which are proven in \cite[Section 3.1]{CL-CKgP}.
\begin{prop}\label{CKgPprops}
Let $X$ be a smooth algebraic stack of finite type over a field, stratified by quotient stacks. 
    \begin{enumerate}
        \item \label{open} if $U\subset X$ is open and $X$ has the CKgP, then $U$ has the CKgP;
        \item \label{proper} if $Y\rightarrow X$ is proper, surjective, representable by DM stacks, and $Y$ has the CKgP, then $X$ has the CKgP;
        \item \label{stratification} if $X$ admits a finite stratification $X=\coprod_{S\in \mathcal{S}} S$ such that each $S$ has the CKgP, then $X$ has the CKgP;
\item \label{affine} if $V\rightarrow X$ is an affine bundle, then $V$ has the CKgP if and only if $X$ has the CKgP;    
        \item \label{Grassmann} if $X$ has the CKgP, and $G\rightarrow X$ is a Grassmann bundle, then $G$ has the CKgP;
        \item \label{BGL} if $X = \BGL_n, \BSL_n$,  or $\BPGL_n$, then $X$ has the CKgP.
    \end{enumerate}
\end{prop}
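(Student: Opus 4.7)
The plan is to derive all six statements from standard functoriality properties of Chow groups (flat pullback, proper representable pushforward, open restriction), the localization sequence, and the fact that these operations are compatible with exterior products. Throughout, Chow groups are taken with rational coefficients so that proper representable surjections induce surjections on $A_*$.

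For (1), given any $Y$, form the commutative square whose horizontal maps are exterior product and whose vertical maps are restriction $X \leadsto U$ (respectively $X \times Y \leadsto U \times Y$). The top map is surjective by hypothesis, both restrictions are surjective by the localization sequence, so the bottom map is surjective by a diagram chase. For (2), given $Y$, base change $Y \to X$ along $\mathrm{id}_Z$ to get a proper surjective representable morphism $Y \times Z \to X \times Z$, inducing a surjection $A_*(Y \times Z) \twoheadrightarrow A_*(X \times Z)$; compatibility of pushforward with exterior product then propagates the CKgP from $Y$ to $X$. For (3), induct on the number of strata: with $S \subset X$ closed and $U = X \setminus S$, the right-exact localization sequence combined with the already-known CKgP for $S$ and for $U$ (by induction) together with surjectivity of $A_*(X) \to A_*(U)$ gives the CKgP for $X$.

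For (4), flat pullback along both $V \to X$ and $V \times Y \to X \times Y$ (again an affine bundle) gives isomorphisms on Chow groups (up to a degree shift), compatibly with exterior products; so the two CKgP statements are equivalent. For (5), a Grassmann bundle admits a Schubert-type stratification whose strata are affine bundles over $X$; each stratum has the CKgP by (4), and $G$ has the CKgP by (3). For (6), use Totaro's approximation: $A_*(BG)$ is computed by $A_*((V \setminus Z)/G)$ for a representation $V$ of $G$ with closed $Z \subset V$ on whose complement $G$ acts freely and $\mathrm{codim}\, Z$ is large. For $G = \GL_n$ such approximations are open subsets of Grassmannians, hence have the CKgP by (5), (1), and the fact that a point has the CKgP trivially; for $\SL_n$ and $\PGL_n$ one uses similar approximations, or bootstraps from $\BGL_n$ using the $\gg_m$-gerbe $\BSL_n \to \BGL_n$ and the affine-bundle structure / stratification arguments already established.

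The main obstacle is not the formal combinatorics above, which is essentially diagram chasing, but the foundational requirement that the usual machinery of intersection theory, most importantly the localization sequence and surjectivity of proper representable pushforward on $A_* \otimes \qq$, be available for algebraic stacks of finite type stratified by quotient stacks. This relies on the work of Vistoli, Edidin--Graham, Kresch, and Totaro; once these tools are in place, the six items follow in the order (4), (1), (3), (2), (5), (6), with each item used in the argument for the next.
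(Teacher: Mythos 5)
The paper does not reprove this proposition; it simply records the properties and cites \cite[Section~3.1]{CL-CKgP} for all of them, so there is no in-paper argument to compare against line by line. Your sketch for items (1)--(4) matches the standard toolkit (excision, compatibility of exterior products with flat pullback and proper representable pushforward, and a two-row right-exact diagram chase for the stratification item) and is essentially correct.

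There is, however, a genuine gap in your item (5). You assert that a Grassmann bundle $G \to X$ admits a Schubert-type stratification whose strata are affine bundles over $X$. Schubert cells are defined relative to a \emph{complete flag}, and globalizing this to the bundle setting requires a complete flag of subbundles $0 \subset V_1 \subset \cdots \subset V_n = V$ of the underlying vector bundle. Such a flag rarely exists --- for instance the universal bundle on $\BGL_n$ admits no such filtration --- and (5) is applied in the paper with nontrivial bases $X$ (Lemmas~\ref{s1} and~\ref{7nckgp}), so the gap matters. Two standard repairs: (a) pass to the full flag bundle $\mathrm{Fl}(V) \to X$, which \emph{is} an iterated tower of projective bundles over $X$, so the projective-bundle formula propagates the CKgP up the tower, and then use (2) for the proper surjective representable map $\mathrm{Fl}(V) \to \Gr(k,V)$; or (b) prove a relative Grassmann-bundle formula $A_*(\Gr(k,V)) \cong A_*(X)^{\oplus \binom{n}{k}}$ by Noetherian induction and excision after stratifying $X$ so that $V$ becomes trivial on each piece --- this is precisely the structure of the paper's proof of the analogous Lemma~\ref{OGbun} for orthogonal Grassmann bundles, and it crucially does \emph{not} require the strata of $X$ to have the CKgP. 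Note also that your Schubert-stratification argument \emph{does} work over a point, which is all you need for the application in (6), so your derivation of $\BGL_n$ from Grassmannian approximations is fine once the point case of (5) is in place. Two smaller points in (6): $\BSL_n \to \BGL_n$ is a $\gg_m$-\emph{torsor} (the complement of the zero section in a line bundle), not a $\gg_m$-gerbe, so (4) plus (1) applies directly; and $\BPGL_n$ requires its own approximation argument rather than a quick bootstrap, since $\BGL_n \to \BPGL_n$ is a (non-proper) $\gg_m$-gerbe and neither (2) nor (4) applies to it.
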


If $X$ is smooth and proper and has the CKgP, then the cycle class map for $X$ is an isomorphism \cite[Lemma 3.11]{CL-CKgP}. 

When $X$ is smooth but not necessarily proper and has the CKgP, the cycle class map is not necessarily an isomorphism. Nevertheless, we have the following useful substitute (cf. \cite{Laterveer, Totarolinear} for slightly different statements with similar proofs).

\begin{lem}\label{cycleclass}
Let $X$ be an open substack of a smooth proper Deligne--Mumford stack $\Xb$ over the complex numbers. If $X$ has the CKgP, then the cycle class map
\[
\cl\colon\bigoplus_i A_i(X)\rightarrow \bigoplus_k W_k H^k(X)
\]
is surjective. In particular, if $k$ is odd then 
$W_kH^k(X)=0$, and if $k$ is even then $W_k H^k(X)$ is pure Hodge--Tate. 
\end{lem}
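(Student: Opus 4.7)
The plan is to implement a Bloch--Srinivas-style decomposition of the diagonal in this open setting, with the key feature that the CKgP for $X$ is applied twice: once to decompose $[\Delta_X]$ on $X \times X$, and again to control the boundary correction that arises after lifting to $\Xb \times X$. Let $j\colon X \hookrightarrow \Xb$ denote the open inclusion and $i\colon D \hookrightarrow \Xb$ the complementary closed embedding. Because $\Xb$ is smooth and proper, standard mixed Hodge theory identifies $W_k H^k(X)$ with the image of $j^*\colon H^k(\Xb)\to H^k(X)$. It therefore suffices to show that $j^*\alpha$ is algebraic on $X$ for every $\alpha \in H^k(\Xb)$.

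By the CKgP applied with $Y = X$, the diagonal decomposes as $[\Delta_X] = \sum_i p_1^* a_i \cdot p_2^* b_i$ in $A_d(X \times X)$, with $a_i, b_i \in A_*(X)$ and $d = \dim X$. Lifting each $a_i$ to $\bar a_i \in A_*(\Xb)$ via the surjection $A_*(\Xb)\to A_*(X)$, one obtains an equality $[\Delta_X] = \sum_i p_1^* \bar a_i \cdot p_2^* b_i + F$ in $A_d(\Xb \times X)$, in which the correction $F$ vanishes after restriction to $X \times X$; by excision, $F$ is supported on $D \times X$. Applying the CKgP a second time, now with $Y = D$, decomposes $F = \sum_j q_1^* u_j \cdot q_2^* v_j$ with $u_j \in A_*(D)$, $v_j \in A_*(X)$, and $q_1, q_2$ the projections from $D \times X$.

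The correspondence $[\Delta_X] \subset \Xb \times X$ acts on $H^k(\Xb)$ as $j^*$, since $p_2$ restricts to the isomorphism $\Delta_X \cong X$ and $p_1$ restricts to $j$. Computing this action via the two decompositions above and the projection formula should yield
\[ j^*\alpha = \sum_i \Bigl( \int_{\Xb} \bar a_i \cdot \alpha \Bigr) [b_i] + \sum_j \Bigl( \int_D [u_j] \cdot i^*\alpha \Bigr) [v_j], \]
where the integrals are scalars (using properness of $\Xb$ and $D$) and $[b_i], [v_j]$ are cycle classes of algebraic cycles on $X$. Dimension counting forces $\dim b_i = \dim v_j = d - k/2$, so when $k$ is odd the formula is an empty sum and $W_k H^k(X) = 0$; when $k$ is even, the formula exhibits $j^*\alpha$ as a sum of algebraic cycle classes, and in particular of Hodge--Tate type.

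The main obstacle, and the reason one needs the full strength of the CKgP hypothesis, is controlling the boundary correction $F$. Lifting the diagonal decomposition all the way to $\Xb \times \Xb$ would place correction terms on both $D \times \Xb$ and $\Xb \times D$, and a correction on $D \times \Xb$ would not visibly contribute an algebraic class after restriction to $X$. The trick is to lift only as far as $\Xb \times X$, which confines the correction to $D \times X$, and then invoke the CKgP a second time to factor this correction with one factor already in $A_*(X)$; the corresponding term in the correspondence action then involves the cycle class $[v_j]$ multiplied by a scalar, which is automatically algebraic.
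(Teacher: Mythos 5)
Your proof is correct, and the core idea is the same as the paper's: a Bloch--Srinivas-style decomposition of the diagonal using the CKgP, fed through a correspondence action to show that $j^*\alpha$ is algebraic. But your implementation differs, and the difference is worth comparing.

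The paper starts with the diagonal $\Delta \in A^d(\Xb \times \Xb)$ and restricts it to $A^d(\Xb \times X)$. The CKgP for $X$ (applied with $Y = \Xb$) expresses this restriction as a sum of products $V^\ell \times W^{d-\ell}$; lifting back to $A^d(\Xb \times \Xb)$ and using the excision sequence
$A^d(\Xb \times D) \to A^d(\Xb \times \Xb) \to A^d(\Xb \times X) \to 0$
puts the correction $\Gamma$ on $\Xb \times D$. That is the ``good'' side: since the correspondence action $\Gamma_*\alpha = p_{2*}(p_1^*\alpha \cdot \Gamma)$ factors through $D$ (because $p_2$ restricted to a cycle in $\Xb \times D$ lands in $D$), $\Gamma_*\alpha$ is pushed forward from $D$ and restricts to zero on $X$. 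No second CKgP application is needed.

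You instead decompose $[\Delta_X] \in A_d(X \times X)$, lift one factor to $\Xb$, and obtain a correction $F$ supported on $D \times X$. This puts the correction on the opposite factor, so it does \emph{not} vanish automatically on restriction to $X$, and you need a second application of the CKgP (with $Y = D$) to write $F$ as a sum of exterior products $u_j \times v_j$ and then recognize the $F$-contribution to $j^*\alpha$ as a sum of scalars times cycle classes $[v_j]$. That is a valid route to the conclusion, but the second CKgP application is extra machinery.

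Your remark that ``lifting the diagonal decomposition all the way to $\Xb \times \Xb$ would place correction terms on both $D \times \Xb$ and $\Xb \times D$'' is a misconception about the paper's route. One does not lift from $X \times X$ to $\Xb \times \Xb$; one restricts $\Delta_{\Xb}$ from $\Xb \times \Xb$ to $\Xb \times X$ only (keeping the first factor proper) and lifts back. That puts the correction exclusively on $\Xb \times D$, and the asymmetry of the correspondence $p_{2*}(p_1^*(-)\cdot \Delta)$ then kills it for free. The takeaway is that it matters which factor one opens up: arranging for the correction to land on the $p_2$-side is what makes the single-CKgP argument go through cleanly.

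One small point of hygiene: your intermediate computations involve products of cycle classes on the possibly singular $D \times X$, which should be phrased as exterior products in Borel--Moore homology together with cap products with pulled-back cohomology classes; the formula $q_{2*}\bigl((i^*\alpha \cap \cl(u_j)) \times \cl(v_j)\bigr) = \deg(i^*\alpha \cap \cl(u_j)) \cdot \cl(v_j)$ is the precise version. This is all standard but should be stated with that care. With that, your argument is sound.
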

\begin{proof}
Set $d:=\dim \Xb$ and $D:=\Xb\smallsetminus X$. Let $\Delta\subset \Xb\times \Xb$ denote the diagonal.
Because $X$ has the CKgP, the exterior product map
\[
\bigoplus_{\ell=0}^{d} A^\ell(\Xb)\otimes A^{d-\ell}(X)\rightarrow A^d(\Xb\times X)
\]
is surjective. We have the excision exact sequence
\[
A^d(\Xb\times D)\rightarrow A^d(\Xb\times \Xb)\rightarrow A^d(\Xb\times X)\rightarrow 0.
\]
It follows that we can write the class of the diagonal in $A^d(\Xb \times \Xb)$ as
\begin{equation}\label{decomposition}
\Delta = \Gamma + \Delta^0 + \Delta^1 +\cdots + \Delta^d,
\end{equation}
where $\Gamma$ is supported on $\Xb\times D$ and each $\Delta^\ell$ is a linear combination of cycles of the form $V_i^\ell\times W_i^{d-\ell}$, where $V_i^{\ell}$ and $W_i^{d-\ell}$ are subvarieties of $\Xb$ of codimension $\ell$ and $d-\ell$, respectively. 

Let $p_1$ and $p_2$ be the projections of $\overline{X} \times \overline{X}$ to the first and second factors respectively. Given a class $\Psi \in H^*(\overline{X} \times \overline{X})$, we write $\Psi_*\colon H^*(\overline{X}) \to H^*(\overline{X})$ for the associated correspondence, defined by $\Psi_*\alpha = p_{2*}(p_1^*\alpha\cdot \Psi)$.

Let $a\in W_k H^k(X)$. Let $\alpha$ be a lift of $a$ in $W_k H^k(\Xb)=H^k(\Xb)$. Then
\[
\alpha = \Delta_*\alpha = (\Gamma_* + \Delta^0_* + \Delta^1_* +\cdots + \Delta^d_*)\alpha.
\]
First, we study $\Delta^\ell_*\alpha = p_{2*}(p_1^*\alpha\cdot \Delta^{\ell}).$
 Note that $p_1^*\alpha\cdot \Delta^{\ell}$ vanishes for dimension reasons if $k+2\ell>2d$. Furthermore, since $p_2$ is of relative dimension $d$, the pushforward by $p_2$ of any cycle will vanish if $k+2\ell<2d$. Thus, the only nonzero terms occur when $k=2d-2\ell$. Moreover, $\Delta^{\ell}_*\alpha$ is a linear combination of the form $\sum c_i W_i^{d-\ell}$, so it lies in the image of the cycle class map.

Next, we study $\Gamma_*\alpha$. It suffices to treat the case that $\Gamma$ is the class of a subvariety of $\Xb\times \Xb$ contained in $\Xb\times D$, as it is a linear combination of such subvarieties. In this case, the map $p_2|_{\Gamma}\colon\Gamma\rightarrow \Xb$ factors through $D\rightarrow \Xb$. Thus, $\Gamma_*\alpha$ maps to zero under the restriction to $W_k H^k(X)$ because the correspondence map factors through the cohomology of the boundary $D$.
Thus, $\alpha|_{X}=a$ is in the image of the cycle class map. 
\end{proof}

\begin{rem}
Essentially the same argument (only with $\qq_\ell$-coefficients) gives a similar statement for the cycle class map to $\ell$-adic \'etale cohomology over an arbitrary field.
\end{rem}

The first two authors have previously given many examples of moduli spaces $\M_{g,n}$ that have the CKgP and also satisfy $A^*(\M_{g,n}) = R^*(\M_{g,n})$ \cite[Theorem 1.4]{CL-CKgP}.  For the inductive arguments in this paper, we need more base cases in genus $7$. In the next section, we prove that $\M_{7,n}$ has the CKgP and $A^*(\M_{7,n}) = R^*(\M_{7,n})$, for $n \leq 3$. The table below records the previously known results from \cite[Theorem 1.4]{CL-CKgP} together with Theorem~\ref{thm:genus7}.

\begin{table}[h!]
\centering
\begin{tabular}{|c|c|c|c|c|c|c|c|c|} 
 \hline
 $g$ & $0$ & $1$ & $2$ & $3$ & $4$ & $5$ & $6$ & $7$ \\
 \hline
 $c(g)$ & $\infty$ & $10$ & $10$ & $11$ & $11$ & $7$ & $5$ & $3$  \\ 
\hline
\end{tabular}
\vspace{.1in}
\caption{$\M_{g,n}$ has the CKgP and $A^*(\M_{g,n})=R^*(\M_{g,n})$, for $n \leq c(g)$, by \cite[Theorem 1.4]{CL-CKgP} and Theorem~\ref{thm:genus7}.}
\label{ckgptable}
\end{table}
\noindent 
The following is a consequence of Lemma \ref{cycleclass}.
\begin{prop}\label{openCKgPresults}
For all $g\leq 7$ and $n \leq c(g)$ as specified in Table~\ref{ckgptable}, we have
\[
W_k H^k(\M_{g,n})=RH^k(\M_{g,n}).
\]
In particular, if $k$ is odd and $n \leq c(g)$, then
\[
W_k H^k(\M_{g,n}) = \mathrm{gr}^W_{k} H^{k}_c(\M_{g,n})=0.
\]
\end{prop}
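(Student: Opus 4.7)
The plan is to derive this proposition essentially as a packaging of Lemma~\ref{cycleclass} with the CKgP data assembled in Table~\ref{ckgptable}. The two inputs I would use are: first, that $\M_{g,n}$ has the CKgP for every $(g,n)$ in the stated range, by Theorem~\ref{thm:genus7} together with \cite[Theorem 1.4]{CL-CKgP}; and second, that $A^*(\M_{g,n}) = R^*(\M_{g,n})$ for the same range, by the same two results.

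For the first equality, I would apply Lemma~\ref{cycleclass} to the open inclusion $\M_{g,n}\subset \Mb_{g,n}$, noting that $\Mb_{g,n}$ is a smooth proper Deligne--Mumford stack. The lemma gives surjectivity of the cycle class map
\[
\cl\colon\bigoplus_i A^i(\M_{g,n})\twoheadrightarrow\bigoplus_k W_k H^k(\M_{g,n}).
\]
Since $\cl$ sends $A^i$ into $H^{2i}$, this already yields $W_kH^k(\M_{g,n})=0$ when $k$ is odd. When $k=2i$ is even, substituting $A^i(\M_{g,n})=R^i(\M_{g,n})$ identifies the image of $\cl$ with $RH^{2i}(\M_{g,n})$, giving $W_{2i}H^{2i}(\M_{g,n})\subseteq RH^{2i}(\M_{g,n})$. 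The reverse inclusion is automatic, since tautological classes on $\M_{g,n}$ are restrictions of tautological classes on $\Mb_{g,n}$, and $H^{2i}(\Mb_{g,n})$ is pure of weight $2i$.

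For the second equality, I would invoke Poincar\'e duality on the smooth DM stack $\M_{g,n}$ of dimension $d=3g-3+n$, which gives an isomorphism of mixed Hodge structures (and of Galois representations)
\[
H^k_c(\M_{g,n})\cong H^{2d-k}(\M_{g,n})^\vee(-d).
\]
This exchanges the top-weight graded piece of $H^k_c$ with the (Tate-twisted dual of the) top-weight pure part of $H^{2d-k}$, so $\mathrm{gr}^W_k H^k_c(\M_{g,n})$ is identified with $W_{2d-k}H^{2d-k}(\M_{g,n})^\vee$ up to a Tate twist. When $k$ is odd, so is $2d-k$, and the right-hand side vanishes by the previous step.

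Since the argument is purely a packaging step, I do not anticipate any genuine obstacle in its execution: all of the technical substance lives in the CKgP and $A^*=R^*$ results referenced above and in the diagonal-decomposition argument used to prove Lemma~\ref{cycleclass}. Once those are in place, the proposition is immediate.
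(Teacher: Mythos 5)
Your proof is correct and matches the paper's approach: the paper states the proposition without an explicit proof, remarking only that it is a consequence of Lemma~\ref{cycleclass}, and your write-up supplies precisely the intended details---the cycle class surjectivity from the CKgP combined with $A^*=R^*$ gives $W_kH^k=RH^k$, and Poincar\'e duality for the smooth DM stack $\M_{g,n}$ gives $\mathrm{gr}^W_k H^k_c\cong\bigl(W_{2d-k}H^{2d-k}\bigr)^\vee(-d)$, which vanishes for odd $k$ by the first part (note $2d-k$ is then also odd). One minor slip of terminology: $W_{2d-k}H^{2d-k}(\M_{g,n})$ is the \emph{lowest}-weight (pure) part of $H^{2d-k}$, not the ``top-weight pure part,'' though this does not affect the argument.
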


\begin{proof}[Proof of Theorem~\ref{thm:g3}, assuming Theorem~\ref{thm:131415}]
By the Hard Lefschetz theorem, it suffices to prove that $H^k(\Mb_{3,n})^{\semis}$ is a polynomial in $\mathsf{L}$ and $\mathsf{S}_{12}$ for fixed $k\leq 17$ and $n\leq 11$. Consider the right exact sequence
\[
H^{k-2}(\tilde{\partial \M_{3,n}})\rightarrow H^k(\Mb_{3,n})\rightarrow W_k H^k(\M_{3,n})\rightarrow 0.
\]
By Theorem \ref{thm:131415} and \cite[Theorems 1.1, 1.2 and Lemma 2.1]{CanningLarsonPayne}, for $k'\leq 15$, $H^{k'}(\Mb_{g',n'})^{\semis}$ is a polynomial in $\mathsf{L}$ and $\mathsf{S}_{12}$ for all $g'\neq 1$ and all $n'$ as well as for $g'=1$ and $n'\leq 13$.
Since we assume $n \leq 11$, no $\Mb_{1,n'}$ with $n' \geq 14$ appears as a factor in a component of the normalized boundary.
It follows that, $H^{k-2}(\tilde{\partial \M_{3,n}})^{\semis}$ is a polynomial in $\mathsf{L}$ and $\mathsf{S}_{12}$.
By \cite[Theorem 1.4]{CL-CKgP}, $\M_{3,n}$ has the CKgP and $A^*(\M_{3,n}) = R^*(\M_{3,n})$ for $n \leq 11$. Thus $W_k H^k(\M_{3,n})=RH^k(\M_{3,n})$ by Lemma \ref{cycleclass}. Hence, $W_k H^k(\M_{3,n})$ is pure Hodge--Tate, and the theorem follows.
\end{proof}
\section{The CKgP in genus 7 with at most 3 marked points}\label{sec:7CKgP}

In order to prove Theorems \ref{thm:131415}, \ref{thm:taut}\eqref{it:homology} for $k = 14$, and \ref{thm:15}, we need more base cases in genus $7$. The required base cases are given by Theorem~\ref{thm:genus7}, which we now prove. 

In order to prove Theorem~\ref{thm:genus7}, we filter  $\M_{7,n}$ by gonality:
\[
\M^{2}_{7,n}\subset \M^{3}_{7,n}\subset \M^4_{7,n} \subset \M^{5}_{7,n}. 
\]
Here, $\M^{k}_{7,n}$ is the locus parametrizing smooth curves $C$ with $n$ marked points that admit a map of degree at most $k$ to $\p^1$. Standard results from Brill--Noether theory show that the maximal gonality of a genus $7$ curve is $5$. By Proposition \ref{CKgPprops}\eqref{stratification}, to show that $\M_{7,n}$ has the CKgP, it suffices to show that each gonality stratum
\[
\M^k_{7,n}\smallsetminus \M^{k-1}_{7,n}.
\]
has the CKgP. Moreover, to show that $A^*(\M_{7,n})=R^*(\M_{7,n})$, it suffices to show for each $k$ that all classes supported on $\M^k_{7,n}$ are tautological up to classes
supported on $\M^{k-1}_{7,n}$. In other words, we must show that every class in $A^*(\M_{7,n}^k \smallsetminus \M_{7,n}^{k-1})$
pushes forward to a class in $A^*(\M_{7,n} \smallsetminus \M^{k-1}_
{7,n})$ that is the restriction of a tautological class
on $\M_{7,n}$. 

\subsection{Hyperelliptic and trigonal loci}

By \cite[Lemma 9.9]{CL-CKgP}, if $n\leq 14$, then $\M^3_{7,n}$ has the CKgP and all classes in $A^*(\M_{7,n})$ supported on $\M^3_{7,n}$ are tautological. Note that this includes the hyperelliptic locus.

\subsection{The tetragonal locus}
To study the tetragonal locus $\M^4_{7,n}\smallsetminus \M^3_{7,n}$, we will use the Hurwitz stack $\H_{4,g,n}$ parametrizing degree $4$ covers $f\colon C\rightarrow \p^1$, where $C$ is a smooth curve of genus $g$ with $n$ marked points.
There is a forgetful morphism
    \[
    \beta_n\colon\H_{4,g,n}\rightarrow \M_{g,n}.
    \]
Restricting to curves of gonality exactly $4$, we obtain a proper morphism
    \[
    \beta_n'\colon\H_{4,g,n}^{\Diamond}:= \H_{4,g,n}\smallsetminus \beta_n^{-1}(\M^3_{g,n})\rightarrow \M_{g,n}\smallsetminus \M^3_{g,n}
    \]
with image $\M_{g,n}^4 \smallsetminus \M_{g,n}^3$. To show that $\M^4_{7,n}\smallsetminus \M^3_{7,n}$ has the CKgP, it suffices to show that $\H_{4,7,n}^\Diamond$ has the CKgP by Proposition \ref{CKgPprops}\eqref{proper}. We will do so by further stratifying $\H_{4,7,n}^\Diamond$.

In \cite[Section 4.4]{CanningLarson789}, the first two authors studied a stratification of $\H_{4,7} :=  \H_{4,7,0}$ with no markings. Here we carry out a similar analysis with marked points. The Casnati--Ekedahl structure theorem \cite{CasnatiEkedahl} associates to a point in $\H_{4,g}$ a rank $3$ vector bundle $E$ and a rank $2$ vector bundle $F$ on $\p^1$, both of degree $g+3$, equipped with a canonical isomorphism $\det E \cong \det F$ \cite[Section 3]{part1}. Let $\B$ be the moduli stack of pairs of vector bundles $(E, F)$ on $\pp^1$ of degree $g+3$, together with an isomorphism of their determinants as in \cite[Definition 5.2]{part1}. Let $\pi \colon  \P \to \B$ be the universal $\pp^1$-fibration, and let $\E$ and $\F$ be the universal bundles on $\P$. There is a natural morphism $\H_{4,g} \to \B$ that sends a degree $4$ cover to its associated pair of vector bundles. 
Moreover, the Casnati--Ekedahl construction gives an embedding of the universal curve $\C$ over $\H_{4,g}$ into $\p \E^{\vee}$.

Consider the natural commutative diagram
\begin{equation} \label{comd}
\begin{tikzcd}
\C_n \arrow{r}{b} \arrow{d} & \C  \arrow{d} \arrow{r}{a} & \pp \E^\vee \arrow{d} \\
\arrow[bend left = 50, "\sigma_i"]{u} \H_{4,g,n} \arrow{r} & \H_{4,g} \arrow{r} & \B,
\end{tikzcd}
\end{equation} 
where $\C_n$ is the universal curve over $\H_{4,g,n}$. For each $i$, the map $a \circ b \circ \sigma_i$ sends a pointed curve to the image of the $i$th marking under the Casnati--Ekedahl embedding. Taking the product of these maps for $i = 1, \ldots, n$, we obtain a commutative diagram
\begin{equation} \label{HB}
\begin{tikzcd}
\H_{4,g,n}^{\Diamond}\arrow{r} \arrow{d} & \H_{4,g,n} \arrow{r} \arrow{d} & (\pp \E^\vee)^n \arrow{d} \\
\H_{4,g}^{\Diamond}\arrow{r} & \H_{4,g} \arrow{r} & \B.
\end{tikzcd}
\end{equation}

\begin{lem}[Lemma 10.5 of \cite{CL-CKgP}]\label{tot}
Suppose $x \in A^*(\H_{4,g,n}^\Diamond)$ lies in the image of the map $A^*((\pp \E^\vee)^n) \to A^*(\H_{4,g,n}^\Diamond)$. Then $\beta'_{n*}x$ is tautological on $\M_{g,n} \smallsetminus \M_{g,n}^3$.
\end{lem}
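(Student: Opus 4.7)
The plan is to reduce the lemma to two sublemmas: (i) for each generator of $A^*((\pp\E^\vee)^n)$ as an $A^*(\B)$-algebra, its pullback to $\H_{4,g,n}^\Diamond$ is tautological; and (ii) $\beta'_{n*}$ carries tautological classes on $\H_{4,g,n}^\Diamond$ to tautological classes on $\M_{g,n}\smallsetminus\M_{g,n}^3$. Since $(\pp\E^\vee)^n\to\B$ is an iterated $\pp^2$-bundle, the Chow ring $A^*((\pp\E^\vee)^n)$ is generated over $A^*(\B)$ by the hyperplane classes $\zeta_i := \mathrm{pr}_i^*c_1(\O_{\pp\E^\vee}(1))$. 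It therefore suffices to verify (i) for classes pulled back from $\B$ and for each $\zeta_i$, and then invoke (ii).

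For classes pulled back from $\B$, I would invoke the $n=0$ analog of the lemma, established in \cite{CL-CKgP} and used in the unmarked Hurwitz-space analysis of \cite{CanningLarson789}: pullbacks from $\B$ to $\H_{4,g}^\Diamond$ push forward tautologically along $\beta'$. Combined with the commuting square of \eqref{HB} (whose vertical maps are iterated universal curves), and the fact that tautological classes are closed under pullback along such morphisms, this yields (i) for $A^*(\B)$-classes. Likewise, (ii) for these classes reduces to the known unmarked case by the projection formula along the forgetful maps $\H_{4,g,n}^\Diamond\to\H_{4,g}^\Diamond$ and $\M_{g,n}\smallsetminus\M_{g,n}^3\to\M_g\smallsetminus\M_g^3$.

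For the hyperplane classes, the key step is to compute $\sigma_i^*b^*a^*\O_{\pp\E^\vee}(1)$ on $\H_{4,g,n}^\Diamond$. Using the Casnati--Ekedahl embedding $\C\hookrightarrow\pp\E^\vee$ from \eqref{comd}, one can identify $a^*\O_{\pp\E^\vee}(1)|_\C$ with a twist of the relative dualizing sheaf of $\C\to\H_{4,g}$ by a line bundle pulled back from $\B$; restricting along the $i$th section then expresses $\sigma_i^*b^*a^*\O_{\pp\E^\vee}(1)$ as a tautological combination of the psi class $\psi_i$ and a class pulled back from $\B$. Granting this, every monomial in the $\zeta_i$ and classes from $A^*(\B)$ pulls back to a tautological class on $\H_{4,g,n}^\Diamond$, and (ii) applied to such classes yields the desired tautologicality of $\beta'_{n*}x$ on $\M_{g,n}\smallsetminus\M_{g,n}^3$. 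The main obstacle is the explicit identification of $a^*\O_{\pp\E^\vee}(1)|_\C$ in terms of the relative dualizing sheaf and $\B$-classes, since it is exactly what converts hyperplane classes on $(\pp\E^\vee)^n$ into psi classes on $\H_{4,g,n}^\Diamond$ after restriction to the marked sections; once this identification is in hand, the rest of the argument is formal bookkeeping with tautological classes.
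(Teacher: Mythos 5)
The paper does not include its own proof of this lemma; it simply cites Lemma~10.5 of \cite{CL-CKgP}, so your attempt can only be compared against the setup the paper provides and the likely shape of the cited argument. Your broad strategy (reduce to generators of $A^*((\pp\E^\vee)^n)$ over $A^*(\B)$, identify their restrictions to $\H_{4,g,n}^\Diamond$ in terms of $\psi$-classes and $\B$-classes via the Casnati--Ekedahl embedding, and reduce the remaining pushforward to the unmarked case) is in the right spirit, but there is a concrete error in the first step.

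You assert that $(\pp\E^\vee)^n\to\B$ is an iterated $\pp^2$-bundle and hence that $A^*((\pp\E^\vee)^n)$ is generated over $A^*(\B)$ by the hyperplane classes $\zeta_i=\eta_i^*c_1(\O_{\pp\E^\vee}(1))$ alone. This is false: $\E$ is a rank-$3$ bundle on $\P$, not on $\B$, so $\pp\E^\vee$ is a $\pp^2$-bundle over the $\pp^1$-bundle $\P\to\B$, and the $n$-fold fiber product over $\B$ requires the additional generators $z_i=\eta_i^*\gamma^*c_1(\O_\P(1))$. The paper states this explicitly just before Lemma~\ref{s1}: ``Then $z_i$ and $\zeta_i$ generate $A^*((\pp\E^\vee)^n)$ as an algebra over $A^*(\B)$.'' Your argument never addresses the $z_i$'s; their restrictions along $\sigma_i^*b^*a^*$ are the pullbacks of $\O_{\pp^1}(1)$ along the cover $f\colon C\to\pp^1$ evaluated at the marked sections, which are \emph{not} the same as the $\omega_f$-twists you identify for $\zeta_i$, and one must separately show their contributions push forward to tautological classes. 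Without handling the $z_i$'s, the proof is incomplete and cannot conclude.

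A secondary weakness: your sublemma~(ii), that $\beta'_{n*}$ carries ``tautological classes'' on $\H_{4,g,n}^\Diamond$ to tautological classes on $\M_{g,n}\smallsetminus\M_{g,n}^3$, cannot be reduced to the unmarked $n=0$ case by a naive projection formula, because a monomial in $z_i$, $\zeta_i$, and $\B$-classes on $\H_{4,g,n}^\Diamond$ is generally not a pullback from $\H_{4,g}^\Diamond$. This is precisely why \cite{CL-CKgP} needs the marked-point version as a separate lemma. The honest argument requires a GRR-type computation on the universal curve over the marked Hurwitz space (identifying restrictions of $z_i$ and $\zeta_i$ to the sections with $\psi$-classes and $\B$-classes, then pushing monomials forward through the diagram \eqref{comd}), rather than a formal reduction to $n=0$.
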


The splitting types of the Casnati--Ekedahl bundles $E$ and $F$ induce a stratification on $\H_{4,7}$. 
We write $E = (e_1, e_2, e_3)$ and $F = (f_1, f_2)$ to indicate that the bundles have splitting types
\[E=\O(e_1)\oplus \O(e_2)\oplus \O(e_3) \qquad \text{and} \qquad F=\O(f_1)\oplus \O(f_2).\]
We will consider a stratification into three pieces 
\[
\H_{4,7,n}=X_n\sqcup Y_n \sqcup Z_n.
\]
The three strata correspond to unions of splitting types of $E$ and $F$. The possible splitting types are recorded in \cite[Section 4.4]{CanningLarson789}.
The locus $Z_n$ is the set of covers with maximally unbalanced splitting types and parametrizes hyperelliptic curves \cite[Equation 4.5]{CanningLarson789}. Its image in $\M_{7,n}$ is contained in $\M_{7,n}^3$, which has the CKgP when $n\leq 14$, as noted above. We will show that $X_n$ and $Y_n$ have the CKgP and that $A^*((\p \E^{\vee})^n)$ surjects onto the Chow ring of their union, which is  $\H_{4,7,n}^\Diamond$. 
We start with $X_n$.

\medskip

Let $X_n\subset \H_{4,7,n}$ denote the locus of covers with splitting types $E = (3, 3, 4)$ and $F = (5,5)$, or $E = (3, 3, 4)$ and $F = (4, 6)$.

\begin{lem} \label{goodopen}
If $n \leq 3$, then $X_n$ has the CKgP and $A^*((\pp \E^\vee)^n) \to A^*(X_n)$ is surjective.
\end{lem}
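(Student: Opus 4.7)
The plan is to present $X_n$, for each of the two splitting-type components making it up, as an open substack of a vector bundle over an open substack of $(\pp \E^\vee)^n$, and then combine the two via stratification. Let $B \subset \B$ denote the locus where $(E, F)$ has one of the two splitting types $((3,3,4), (5,5))$ or $((3,3,4),(4,6))$; each stratum is a classifying stack of a connected algebraic group (an iterated extension involving $\BGL$-type factors), so has the CKgP by the standard reduction to Proposition~\ref{CKgPprops}\eqref{BGL},\eqref{affine},\eqref{open}, and hence so does $B$ by \eqref{stratification}. By the Casnati--Ekedahl structure theorem for tetragonal covers, the universal curve $\C$ is cut out in $\pp\E^\vee|_B$ by the vanishing of a section of the rank $2$ bundle $\pi^*\F\otimes\O_{\pp\E^\vee}(2)$; globally these sections form a vector bundle $\V_B$ over $B$ whose smooth, tetragonal locus surjects onto $X_0$.

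Over $(\pp\E^\vee)^n|_B$ consider the evaluation map of bundles
\[
\mathrm{ev}_n\colon \V_B \boxtimes \O \longrightarrow \bigoplus_{i=1}^{n} \mathrm{pr}_i^*\bigl(\pi^*\F \otimes \O_{\pp\E^\vee}(2)\bigr),
\]
whose target has rank $2n$, sending a section $\eta$ to the tuple of its values $(\eta(q_1), \ldots, \eta(q_n))$. I claim that for $n \leq 3$ this map is surjective on a dense open substack $U_n \subset (\pp\E^\vee)^n|_B$. Granting this, $\ker(\mathrm{ev}_n)|_{U_n}$ is a vector subbundle whose total space parametrizes tuples $(\eta; q_1,\dots,q_n)$ with each $q_i$ lying on the curve $\{\eta = 0\}$, and $X_n$ is recovered as the open substack of this total space where the resulting cover is smooth and the $q_i$ are pairwise distinct.

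Since $U_n$ is open in a tower of projective bundles over $B$, it has the CKgP by Proposition~\ref{CKgPprops}\eqref{Grassmann},\eqref{open}; the total space of $\ker(\mathrm{ev}_n)|_{U_n}$ is a vector bundle over $U_n$ and so also has the CKgP by \eqref{affine}; and then $X_n$ has the CKgP by \eqref{open}. For the Chow surjection, the composition
\[
A^*((\pp\E^\vee)^n) \twoheadrightarrow A^*((\pp\E^\vee)^n|_B) \twoheadrightarrow A^*(U_n) \xrightarrow{\sim} A^*(\ker(\mathrm{ev}_n)|_{U_n}) \twoheadrightarrow A^*(X_n)
\]
is surjective by open-immersion excision, affine-bundle pullback, and a final open-immersion excision. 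Combining the two splitting-type strata with Proposition~\ref{CKgPprops}\eqref{stratification} and the excision sequence in Chow finishes the lemma.

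The main obstacle is the surjectivity of $\mathrm{ev}_n$ on a dense open for $n \leq 3$. This is a Cayley--Bacharach style claim: three general points of $\pp\E^\vee$ should impose six independent conditions on the linear series determined by $\V_B$. I would verify it by pushing down along $\pi$ and exploiting the freedom to vary the images $\pi(q_i) \in \pp^1$ together with the explicit splitting of $\pi^*\F \otimes \O(2)$ dictated by $(E,F)$; the bound $n \leq 3$ reflects where this genericity argument runs into forced dependencies for these particular splitting types, matching the numerical bound on allowed markings in genus $7$.
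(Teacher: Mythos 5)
Your overall plan matches both what the paper cites and what it executes elsewhere: the paper's proof of Lemma~\ref{goodopen} is a one-line appeal to \cite[Lemmas 10.11, 10.12]{CL-CKgP} (after identifying $X_n$ with the locus $\H^4_{4,7,n}$ of \cite[Definition 10.8]{CL-CKgP}), and the proof of Lemma~\ref{s1} for the strata $\Sigma_{\ell,n}$ proceeds exactly as you outline, embedding the stratum as an open substack of the kernel of an evaluation map over $(\pp\E^\vee)^n$ and invoking Proposition~\ref{CKgPprops}. One thing you use silently is that the global Casnati--Ekedahl sections form a vector bundle $\V_B$ over all of $B$; this requires the relative $H^1$ to vanish for \emph{both} splitting types. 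That vanishing does hold for $((3,3,4),(5,5))$ and $((3,3,4),(4,6))$, so your single-$B$ shortcut is legitimate here; but the analogous $H^1$ does \emph{not} vanish for the splitting types of $\Sigma_2$ and $\Sigma_3$, which is precisely why the paper works one fixed stratum $\B_{\vec{e},\vec{f}}$ at a time in Lemma~\ref{s1}. You should at least record the cohomology vanishing you are relying on.

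The genuine gap is in the surjectivity of $\mathrm{ev}_n$. You assert surjectivity only ``on a dense open substack $U_n$,'' but what the argument needs is that $\mathrm{ev}_n$ is surjective at \emph{every} tuple of $n\le 3$ \emph{distinct} points lying on a smooth irreducible curve of the given type, so that the whole of $X_n$ factors through $U_n$. If some such tuples lay outside $U_n$, then $X_n$ would not be an open substack of the kernel bundle $\ker(\mathrm{ev}_n)|_{U_n}$, and neither the CKgP nor the Chow surjection for $X_n$ would follow. This pointwise statement is exactly \cite[Lemma 10.6]{CL-CKgP}, which the paper invokes in the proof of Lemma~\ref{s1} and which underlies \cite[Lemma 10.11]{CL-CKgP}. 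Your final paragraph correctly identifies $n\le 3$ as the numerical threshold, but the genericity (Cayley--Bacharach) argument you sketch establishes only the weaker ``dense open'' version; it needs to be upgraded to surjectivity everywhere on the locus of distinct markings on smooth curves.
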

\begin{proof}
Taking $f =4$ in \cite[Definition 10.8]{CL-CKgP}, we have $X_n = \H_{4,7,n}^4$. The result then follows from \cite[Lemmas 10.11 and 10.12]{CL-CKgP} with $g = 7$ and $f = f_1 = 4$.
\end{proof}

Now let $Y_n\subset \H_{4,7,n}$ be the union 
\[
Y_n = \Sigma_{2,n}\sqcup \Sigma_{3,n},
\]
where $\Sigma_{2,n}$ parametrizes covers with splitting types $E = (2, 4, 4)$ and $F = (4,6)$, and $\Sigma_{3,n}$ parametrizes covers with splitting types $E = (2, 3, 5)$ and $F = (4,6)$.

Recall that $\pi\colon\P \rightarrow \B$ is the structure map for the $\pp^1$ bundle $\P$. Let $\gamma\colon \pp \E^\vee \to \P$ denote the structure map and $\eta_i\colon (\pp \E^\vee)^n \to \pp \E^\vee$ denote the $i$th projection. Define
\[
z_i := \eta_i^* \gamma^* c_1(\O_{\P}(1)) \mbox{ \ \  and \ \ } \zeta_i := \eta_i^* c_1(\O_{\pp \E^\vee}(1)).\] Then $z_i$ and $\zeta_i$ generate $A^*((\pp \E^\vee)^n)$ as an algebra over $A^*(\B)$.  Write $\Sigma_{\ell} := \Sigma_{\ell, 0}$. 

\begin{lem} \label{s1}
For $\ell = 2,3$ and $n \leq 3$, there is a surjection 
\[A^*(\Sigma_{\ell})[z_1, \ldots, z_n, \zeta_1, \ldots, \zeta_n] \to A^*(\Sigma_{\ell,n})\]
induced by $\Sigma_{\ell,n}\rightarrow \Sigma_\ell$ and restriction from $(\p \E^{\vee})^n$.
Moreover, $\Sigma_{\ell,n}$ has the CKgP.
\end{lem}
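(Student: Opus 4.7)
Proof plan.

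We argue by induction on $n$, adapting the method of \cite[Lemma 10.12]{CL-CKgP} from the balanced stratum to the unbalanced ones treated here. The base case $n=0$ requires showing that $\Sigma_\ell$ has the CKgP. By the Casnati-Ekedahl theorem, $\Sigma_\ell$ is an open substack of a quotient $[V_\ell / G_\ell]$, where $V_\ell$ is the affine space of CE relations on $\p^1$ for the fixed splitting types and $G_\ell := (\Aut(E) \times \Aut(F))/\gg_m$ is a connected solvable linear algebraic group with Levi built from factors of $\gg_m$ and $\GL_r$. Hence $BG_\ell$ has the CKgP by Proposition~\ref{CKgPprops}\eqref{affine} and \eqref{BGL}, and this property descends to $[V_\ell/G_\ell]$ and then to $\Sigma_\ell$ via Proposition~\ref{CKgPprops}\eqref{affine} and \eqref{open}.

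For the inductive step, observe that the forgetful map $\Sigma_{\ell,n} \to \Sigma_{\ell,n-1}$ is the pullback of the universal curve $\C_\ell \to \Sigma_\ell$, so $\Sigma_{\ell,n} = \Sigma_{\ell,n-1} \times_{\Sigma_\ell} \C_\ell$. The Casnati-Ekedahl construction realizes $\C_\ell$ as the zero locus of a regular section of a rank-$2$ vector bundle inside $\p\E^\vee|_{\Sigma_\ell}$; pulling back, we obtain a codimension-$2$ closed embedding
\[\Sigma_{\ell,n} \;\hookrightarrow\; \Sigma_{\ell,n-1} \times_{\Sigma_\ell} \p\E^\vee|_{\Sigma_\ell}.\]
The ambient is a composite projective bundle over $\Sigma_{\ell,n-1}$, so by the inductive hypothesis and Proposition~\ref{CKgPprops}\eqref{Grassmann} it has the CKgP, and by the projective bundle formula its Chow ring is generated over $A^*(\Sigma_{\ell,n-1})$ by $z_n$ and $\zeta_n$. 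Combined with the inductive surjection for $\Sigma_{\ell,n-1}$, the claimed surjection in the lemma reduces to showing: (a) the restriction from the Chow ring of the ambient to $A^*(\Sigma_{\ell,n})$ is surjective, and (b) $\Sigma_{\ell,n}$ itself has the CKgP.

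To handle (a) and (b), we stratify $\Sigma_{\ell,n}$ geometrically by the position of the $n$-th marked point relative to the cover and the earlier markings: isolating the open stratum on which it is distinct from the previous markings, unramified over $\p^1$, and in linearly general position within its $\p^2$-fiber of $\p\E^\vee$, from the closed complementary strata indexed by coincidences of markings, ramification of the $g^1_4$, and degenerations of the fiberwise $4$-point configuration. On each locally closed stratum one identifies $\Sigma_{\ell,n}$ with an open substack of an iterated projective or affine bundle over a stratum of $\Sigma_{\ell,n-1}$ (or a finite quotient of such); Proposition~\ref{CKgPprops}\eqref{open}, \eqref{proper}, \eqref{affine}, and \eqref{Grassmann} give the CKgP stratumwise, while a direct Chow calculation shows that restriction from the ambient projective bundle is surjective. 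Gluing back via Proposition~\ref{CKgPprops}\eqref{stratification} yields the global statements. The main obstacle is part (a) on the closed strata: for the unbalanced splitting types $((2,4,4),(4,6))$ and $((2,3,5),(4,6))$, additional Chow classes can arise from the special positions of markings relative to the Casnati-Ekedahl data, and showing that these lie in the image of restriction requires carefully exploiting the explicit splittings of $\E$ and $\F$, along the lines of \cite[Section 4.4]{CanningLarson789}. The restriction $n \leq 3$ is what keeps the combinatorics of the stratification, and the corresponding Chow computations, within reach of this method.
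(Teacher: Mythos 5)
The proposal takes a genuinely different route from the paper, and the route has a serious gap.

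Your plan is to induct on $n$: realize $\Sigma_{\ell,n}$ as a codimension-$2$ closed substack of the projective bundle $\Sigma_{\ell,n-1} \times_{\Sigma_\ell} \p\E^\vee|_{\Sigma_\ell}$, and then deduce (a) surjectivity of restriction on Chow and (b) the CKgP from the ambient. But Proposition~\ref{CKgPprops} gives no descent for closed substacks, and neither restriction nor CKgP passes to a closed regular embedding in general. You recognize this and propose stratifying $\Sigma_{\ell,n}$ by degeneracies of the $n$-th marked point (coincidences, ramification, special position in the $\p^2$-fiber), with the hope that each stratum is an open of an iterated affine/projective bundle. This is precisely the part that is not substantiated, and it is not at all clear that such a stratification has the structure you need; it also bears no resemblance to what the paper does. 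The acknowledged ``main obstacle'' on the closed strata for the unbalanced splitting types is exactly where the proposal currently has no argument.

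The paper's mechanism avoids all of this by never viewing $\Sigma_{\ell,n}$ as a closed substack. Setting $\W := \gamma^*\F\otimes \O_{\p\E^\vee}(2)$, the curve is cut out by a global section of $\W$, and the evaluation map
$\eta_j^*(\pi\circ\gamma)^*(\pi\circ\gamma)_*\W_{\vec e,\vec f} \to \bigoplus_{i=1}^n \eta_i^*\W_{\vec e,\vec f}$
on $(\p\E^\vee|_{\B_{\vec e,\vec f}})^n$ has kernel $\Y$ parametrizing $n$ points together with a section vanishing at them. On the open locus $U_n$ where this evaluation map is surjective, $\Y|_{U_n}$ is a \emph{vector bundle}, and the pivotal input is \cite[Lemma 10.6]{CL-CKgP}: for $n\le 3$ distinct points on a smooth irreducible curve in $\p\E^\vee$ the evaluation map is automatically surjective, so $\Sigma_{\ell,n}$ sits as an \emph{open} substack of $\Y|_{U_n}$. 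Then $\Sigma_{\ell,n} \hookrightarrow \Y|_{U_n}\to U_n\hookrightarrow (\p\E^\vee|_{\B_{\vec e,\vec f}})^n \to (\P|_{\B_{\vec e,\vec f}})^n\to \B_{\vec e,\vec f}$ is a chain of opens, vector bundles, and products of projective bundles, from which both CKgP and the surjectivity onto $A^*(\Sigma_{\ell,n})$ (generated over $A^*(\B_{\vec e,\vec f})$ by the $z_i,\zeta_i$) drop out by Proposition~\ref{CKgPprops}. This ``kernel of the evaluation map'' construction is the missing idea in your proposal.

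Two smaller points. Your description of the base case omits the $\SL_2$ acting on $\p^1$: the paper presents $\Sigma_\ell$ as $[(U\times\gg_m)/\SL_2\ltimes(\Aut(\O(\vec e))\times\Aut(\O(\vec f)))]$, not as $[V_\ell/(\Aut(E)\times\Aut(F))/\gg_m]$. Also, your claim that $G_\ell$ is solvable is not right when the splitting types have repeated degrees (e.g.\ $E=(2,4,4)$ gives a $\GL_2$ factor); the paper instead uses that $\mathrm{B}\!\Aut(\O(\vec e))$ is dominated by a product of $\BGL_r$'s via an affine bundle.
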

\begin{proof}
Let $\vec{e}$ and $\vec{f}$ be the splitting types associated to $\Sigma_{\ell}$ and let $\B_{\vec{e}, \vec{f}} \subset \B$ be the locally closed substack that parametrizes pairs of bundles $(E, F)$ with locally constant splitting types $\vec{e}$ and $\vec{f}$. We write $\O(\vec{e}) := \O(e_1) \oplus \cdots \oplus \O(e_k)$. By construction, $\Sigma_{\ell,n}$ is the preimage of $\B_{\vec{e}, \vec{f}}$ along $\H_{4,7,n} \to \B$.
We therefore study the base change of \eqref{HB} along $\B_{\vec{e}, \vec{f}} \to \B$:
\begin{equation}
\begin{tikzcd}
\Sigma_{\ell,n} \arrow{r} \arrow{d} & (\pp \E^\vee|_{\B_{\vec{e},\vec{f}}})^n \arrow{d} \\
\Sigma_{\ell} \arrow{r} & \B_{\vec{e},\vec{f}}.
\end{tikzcd}
\end{equation}

We now recall the description of $\Sigma_\ell$ as an open substack of a vector bundle on $\B_{\vec{e}, \vec{f}}$, as in \cite[Lemma 3.10]{CanningLarson789}. Let
\[U \subset H^0(\pp^1,\O(\vec{f})^\vee \otimes \Sym^2 \O(\vec{e})) = H^0(\pp \O(\vec{e})^\vee, \gamma^*\O(\vec{f})^\vee \otimes \O_{\pp \O(\vec{e})^\vee}(2))\]
be the open subset of equations that define a smooth curve, as in \cite[Lemma 3.10]{CanningLarson789}. Then 
\[\Sigma_\ell = [(U \times \gg_m)/\SL_2 \ltimes (\Aut(\O(\vec{e})) \times \Aut(\O(\vec{f})))].\]
The stack $\B_{\vec{e}, \vec{f}}$ is the part obtained by forgetting $U$:
\[\B_{\vec{e}, \vec{f}} = [\gg_m/\SL_2 \ltimes (\Aut(\O(\vec{e})) \times \Aut(\O(\vec{f})))].\]
As explained in \cite[Equation 3.1]{CanningLarson789}, there is a product of stacks $\BGL_n$ which is an affine bundle over
$\mathrm{B}\!\Aut(\O(\vec{e}))$. As such, $\B_{\vec{e}, \vec{f}}$ has the CKgP by Proposition \ref{CKgPprops}\eqref{affine} and \eqref{BGL}. It follows that $\Sigma_{\ell}$ also has the CKgP by Proposition \ref{CKgPprops}\eqref{open} and \eqref{affine}.

Let us define the rank $2$ vector bundle $\W := \gamma^*\F^\vee \otimes \O_{\pp \E^\vee}(2)$ on $\pp \E^\vee$. Write $\W_{\vec{e},\vec{f}}$ for the restriction of $\W$ to $\pp \E^\vee|_{\B_{\vec{e},\vec{f}}}$. The discussion above says that $\Sigma_{\ell}$ is an open substack of the vector bundle $(\pi \circ \gamma)_*\W_{\vec{e},\vec{f}}$ on $\B_{\vec{e},\vec{f}}$.

Next, we give a similar description with marked points. Consider the evaluation map  
\[
    (\pi\circ\gamma)^*(\pi\circ\gamma)_*\W_{\vec{e},\vec{f}} \rightarrow \W_{\vec{e},\vec{f}}.
    \]
    Pulling back to the fiber product $(\pp \E^\vee|_{\B_{\vec{e},\vec{f}}})^n$, we obtain 
    \begin{equation}\label{eval}
    \eta_j^*(\pi\circ\gamma)^*(\pi\circ\gamma)_*\W_{\vec{e},\vec{f}}\rightarrow \bigoplus_{i=1}^n\eta_i^*\W_{\vec{e},\vec{f}}.
    \end{equation}
    Note that $\eta_j^*(\pi\circ\gamma)^*(\pi\circ\gamma)_*\W_{\vec{e},\vec{f}}$ is independent of $j$. 
The kernel $\Y$ of \eqref{eval} parametrizes tuples of $n$ points on $\pp \O(\vec{e})^\vee$ together with a section of 
\[H^0(\pp \O(\vec{e})^\vee, \gamma^* \O(\vec{f})^\vee \otimes \O_{\pp \O(\vec{e})^\vee}(2))\]
that vanishes on the points. There is therefore a natural map $\Sigma_{\ell,n} \hookrightarrow \Y$ defined by sending a pointed curve to the images of the points on $\pp \O(\vec{e})^\vee$ and the defining section of the curve. The kernel $\Y$ is not locally free. Nevertheless, its restriction to the open substack $U_n \subset (\pp \E^\vee|_{\B_{\vec{e},\vec{f}}})^n$ where \eqref{eval} is surjective is locally free.

We claim that the map $\Sigma_{\ell,n} \to \Y$ factors through $\Y|_{U_n}$. To see this, 
suppose $C \subset \pp \O(\vec{e})^\vee$ is the vanishing of a section of $\gamma^* \O(\vec{f})^\vee \otimes \O_{\pp \O(\vec{e})^\vee}(2)$ and $C$ is smooth and irreducible.
By \cite[Lemma 10.6]{CL-CKgP}, if $n\leq 3$, the evaluation map \eqref{eval} is surjective at any tuple of $n$ distinct points on $C$. It follows that the image of the composition $\Sigma_{\ell,n} \rightarrow \Y \rightarrow (\pp \E^\vee|_{\B_{\vec{e},\vec{f}}})^n$ is contained in $U_n$. Hence, $\Sigma_{\ell,n} \hookrightarrow \Y|_{U_n}$.

In summary, we have a sequence of maps
\[\Sigma_{\ell,n} \hookrightarrow \Y|_{U_n} \rightarrow U_n \hookrightarrow (\pp \E^\vee|_{\B_{\vec{e},\vec{f}}})^n \rightarrow (\P|_{\B_{\vec{e},\vec{f}}})^n \rightarrow  \B_{\vec{e},\vec{f}}. \]
each of which is an open inclusion, vector bundle, or product of projective bundles. Since $\B_{\vec{e},\vec{f}}$ has the CKgP, it follows that $\Sigma_{\ell,n}$ also has the CKgP by Proposition \ref{CKgPprops}\eqref{open},\eqref{affine}, and \eqref{Grassmann}. Moreover, we see that
\[A^*(\B_{\vec{e},\vec{f}})[z_1, \ldots, z_n, \zeta_1, \ldots, \zeta_n] \to A^*(\Sigma_{\ell,n})\]
is surjective. Finally note that $\Sigma_{\ell,n} \to \B_{\vec{e},\vec{f}}$ factors through $\Sigma_\ell$, so $A^*(\B_{\vec{e},\vec{f}}) \to A^*(\Sigma_{\ell,n})$ factors through $A^*(\Sigma_{\ell})$. This proves the claim.
\end{proof}

\begin{cor}
For $n \leq 3$, $\H_{4,7,n}^\Diamond$ has the CKgP. Hence, $\M^4_{7,n}$ has the CKgP.
\end{cor}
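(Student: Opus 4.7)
The plan is to assemble the pieces already built up. The splitting-type decomposition $\H_{4,7,n} = X_n \sqcup Y_n \sqcup Z_n$ restricts to a finite locally closed stratification of the open substack $\H_{4,7,n}^\Diamond \subset \H_{4,7,n}$. My first observation is that this stratification misses $Z_n$ entirely: by definition $Z_n$ parametrizes $4{:}1$ covers whose source is hyperelliptic, and hyperelliptic curves are in particular trigonal, so $Z_n \subset \beta_n^{-1}(\M^3_{7,n})$ and therefore $Z_n \cap \H_{4,7,n}^\Diamond = \varnothing$. Consequently
\[ \H_{4,7,n}^\Diamond \;=\; (X_n \cap \H_{4,7,n}^\Diamond) \;\sqcup\; (\Sigma_{2,n} \cap \H_{4,7,n}^\Diamond) \;\sqcup\; (\Sigma_{3,n} \cap \H_{4,7,n}^\Diamond). \]

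The next step is to invoke the results from the previous lemmas. Lemma~\ref{goodopen} gives the CKgP for $X_n$ (for $n \leq 3$), and Lemma~\ref{s1} gives it for each of $\Sigma_{2,n}$ and $\Sigma_{3,n}$ (for $n \leq 3$). Proposition~\ref{CKgPprops}\eqref{open} then transfers the CKgP to each of their open substacks in $\H_{4,7,n}^\Diamond$, and Proposition~\ref{CKgPprops}\eqref{stratification} combines them to conclude that $\H_{4,7,n}^\Diamond$ has the CKgP.

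For the second statement, I would stratify $\M^4_{7,n} = \M^3_{7,n} \sqcup (\M^4_{7,n} \smallsetminus \M^3_{7,n})$. The closed stratum $\M^3_{7,n}$ has the CKgP for $n \leq 14$ by the cited \cite[Lemma~9.9]{CL-CKgP}. The open stratum is the image of the proper, surjective, representable (by DM stacks) morphism $\beta'_n \colon \H_{4,7,n}^\Diamond \to \M^4_{7,n} \smallsetminus \M^3_{7,n}$ recalled at the start of this subsection, so Proposition~\ref{CKgPprops}\eqref{proper} transports the CKgP of $\H_{4,7,n}^\Diamond$ across it. A final application of Proposition~\ref{CKgPprops}\eqref{stratification} assembles the two strata into the CKgP for $\M^4_{7,n}$.

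There is no real obstacle here: the corollary is a bookkeeping exercise once Lemmas~\ref{goodopen} and~\ref{s1} are in hand, and the only content beyond citing the formal properties of the CKgP is the identification $Z_n \cap \H_{4,7,n}^\Diamond = \varnothing$, which is immediate from the description of $Z_n$ as the hyperelliptic stratum.
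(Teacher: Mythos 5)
Your proof is correct and follows essentially the same route as the paper: express $\H_{4,7,n}^\Diamond$ as the union of the pieces coming from $X_n$, $\Sigma_{2,n}$, $\Sigma_{3,n}$, invoke Lemmas~\ref{goodopen} and~\ref{s1} together with Proposition~\ref{CKgPprops}\eqref{open}--\eqref{stratification} for the first claim, and then stratify $\M^4_{7,n}$ and use Proposition~\ref{CKgPprops}\eqref{proper} for the second. You are, if anything, slightly more careful than the published argument: the paper simply asserts $\H_{4,7,n}^\Diamond = X_n \sqcup Y_n$, whereas you intersect with $\H_{4,7,n}^\Diamond$ and appeal to the openness property, which sidesteps any question of whether every cover in $X_n \sqcup Y_n$ has gonality exactly~$4$. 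One terminological nitpick: hyperelliptic genus $7$ curves have gonality $2$, not $3$, so they are not ``trigonal'' in the usual sense; they lie in $\M^3_{7,n}$ because that stratum is defined by gonality $\leq 3$, and this is what the paper invokes. Your conclusion $Z_n \cap \H_{4,7,n}^\Diamond = \varnothing$ is correct for that reason.
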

\begin{proof}
We have $\H_{4,7,n}^\Diamond = X_n \sqcup Y_n$. By Lemmas \ref{goodopen} and \ref{s1}, each of these pieces has the CKgP. Note that $\H^{\Diamond}_{4,7,n}$ maps properly onto $\M^4_{7,n}\smallsetminus \M^3_{7,n}$ and $\M_{7,n}^3$ has the CKgP. Thus the result follows by Proposition \ref{CKgPprops}\eqref{proper}-\eqref{stratification}.
\end{proof}

The next step is to show that all classes in $A^*(\Sigma_{\ell,n})$ are restrictions from $(\p \E^{\vee})^n$.
\begin{lem} \label{res}
For $\ell = 2,3$ and $n \leq 3$, the restriction $A^*((\p \E^{\vee})^n) \to A^*(\Sigma_{\ell,n})$ is surjective.
\end{lem}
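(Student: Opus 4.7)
The plan is to note that the proof of Lemma \ref{s1} in fact establishes a slightly stronger surjection than the one formally recorded in its statement: namely
\[
A^*(\B_{\vec{e},\vec{f}})[z_1, \ldots, z_n, \zeta_1, \ldots, \zeta_n] \twoheadrightarrow A^*(\Sigma_{\ell,n}),
\]
obtained just before the factoring-through-$\Sigma_\ell$ step. By the iterated projective bundle formula, the left-hand side is canonically identified with $A^*((\pp\E^{\vee}|_{\B_{\vec{e},\vec{f}}})^n)$, and this identification is compatible with the factorization $\Sigma_{\ell,n} \to (\pp\E^{\vee}|_{\B_{\vec{e},\vec{f}}})^n \hookrightarrow (\pp\E^{\vee})^n$ of the evaluation map, since the classes $z_i$ and $\zeta_i$ are defined on $(\pp\E^{\vee})^n$ and pulled back. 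Hence the problem reduces to showing that the restriction map $A^*((\pp\E^{\vee})^n) \twoheadrightarrow A^*((\pp\E^{\vee}|_{\B_{\vec{e},\vec{f}}})^n)$ is surjective.

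Applying the projective bundle formula once more to each of the $n$ factors, this in turn reduces to the surjectivity of $A^*(\B) \twoheadrightarrow A^*(\B_{\vec{e},\vec{f}})$. My approach would be to use the explicit quotient-stack description from the proof of Lemma \ref{s1},
\[
\B_{\vec{e},\vec{f}} = [\gg_m/\SL_2 \ltimes (\Aut(\O(\vec{e})) \times \Aut(\O(\vec{f})))],
\]
together with the observation recorded there that this stack is an affine bundle over a product of classifying stacks of the form $\BGL_m$. Its Chow ring is therefore a polynomial algebra in the Chern classes of the universal bundles $\E|_{\B_{\vec{e},\vec{f}}}$ and $\F|_{\B_{\vec{e},\vec{f}}}$, and these Chern classes are restrictions of the corresponding Chern classes of the global universal $\E$ and $\F$ on $\B$. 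This yields the required surjection.

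The main potential obstacle is that $\B_{\vec{e},\vec{f}}$ is only a locally closed substack of $\B$, and in fact not the generic stratum, so excision alone does not hand over the surjectivity of $A^*(\B) \to A^*(\B_{\vec{e},\vec{f}})$; one really needs the particularly simple quotient-stack presentation of the stratum to see that every generator of its Chow ring already lives on $\B$. Once that is in hand, the rest is an organizational repackaging of the bundle chain already established in Lemma \ref{s1}.
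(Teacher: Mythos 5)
Your first two reductions are sound: the proof of Lemma~\ref{s1} does record the surjection $A^*(\B_{\vec{e},\vec{f}})[z_1, \ldots, \zeta_n] \twoheadrightarrow A^*(\Sigma_{\ell,n})$, and the projective-bundle formula (valid with rational coefficients even though $\P \to \B$ is a priori only a $\pp^1$-fibration) identifies the left side with $A^*\bigl((\pp\E^{\vee}|_{\B_{\vec{e},\vec{f}}})^n\bigr)$, reducing matters to the restriction map on the base. The gap is the final step. First, a notational point: $\E$ and $\F$ are bundles on $\P$, not on $\B$, so ``Chern classes of $\E|_{\B_{\vec{e},\vec{f}}}$'' does not quite parse. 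More seriously, the generators of $A^*(\B_{\vec{e},\vec{f}})$ supplied by the affine-bundle-over-$\prod\BGL_{m_i}$ description are the Chern classes of the tautological bundles of those $\GL$-factors, which correspond to the individual splitting summands of $E$ and of $F$ (e.g.\ a rank-$2$ bundle for the $\O(4)\oplus\O(4)$ piece when $\vec e = (2,4,4)$). These summands exist only over the stratum; on $\B$ the universal bundles do not split, so there is no evident extension of those generators, and nothing in your argument produces one. The assertion $A^*(\B) \twoheadrightarrow A^*(\B_{\vec{e},\vec{f}})$ is in fact strictly stronger than what is needed, since $\Sigma_\ell$ is only an open substack of a vector bundle over $\B_{\vec{e},\vec{f}}$ and the restriction to it may kill exactly the classes that fail to lift.

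The paper avoids this by using the commutative square comparing $A^*(\B)[z_i,\zeta_i] \to A^*(\Sigma_\ell)[z_i,\zeta_i]$ with $A^*((\pp\E^\vee)^n) \to A^*(\Sigma_{\ell,n})$, and citing \cite[Lemmas~4.2 and~4.3(1)]{CanningLarson789} for the surjectivity of $A^*(\B) \to A^*(\Sigma_\ell)$ (for $\ell = 2$ and $\ell = 3$ respectively). Those are nontrivial results specific to the splitting types of $\Sigma_2$ and $\Sigma_3$, not a formal consequence of the quotient-stack presentation of the stratum. Your proof needs that input or an actual verification that the stratum's generators lift to $\B$; as written it asserts the crucial step without establishing it.
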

\begin{proof}
Consider the following diagram:
\begin{equation}
\begin{tikzcd}
A^*((\pp \E^\vee)^n)  \arrow{r} & A^*(\Sigma_{\ell,n})\\
A^*(\B)[z_1, \ldots, z_n, \zeta_1, \ldots, \zeta_n] \arrow{u} \arrow{r} & A^*(\Sigma_{\ell})[z_1, \ldots, z_n, \zeta_1, \ldots, \zeta_n]. \arrow{u}
\end{tikzcd}
\end{equation}
The map $A^*(\B) \to A^*(\Sigma_{\ell})$ is surjective  
by \cite[Lemma 4.2]{CanningLarson789} for $\ell =2$, and by \cite[Lemma 4.3(1)]{CanningLarson789} for $\ell = 3$. 
Therefore, the bottom horizontal arrow is surjective. By Lemma \ref{s1}, the right vertical arrow is also surjective. Hence, the top horizontal arrow is surjective.
\end{proof}

\begin{lem} \label{all}
For $n \leq 3$, the pullback map $A^*((\pp \E^\vee)^n) \to A^*(\H_{4,7,n}^\Diamond)$
is surjective.
\end{lem}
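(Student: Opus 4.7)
The plan is to iterate excision along the stratification $\H^\Diamond_{4,7,n} = X_n \sqcup \Sigma_{2,n} \sqcup \Sigma_{3,n}$ by splitting type of the Casnati--Ekedahl bundle $E$. Semicontinuity of splitting types makes $X_n$ (with $E = (3,3,4)$) open in $\H^\Diamond_{4,7,n}$ and $\Sigma_{3,n}$ (with $E = (2,3,5)$) closed, with the closed complement of $X_n$ equal to $\bar\Sigma_{2,n} := \Sigma_{2,n} \sqcup \Sigma_{3,n}$. Write $p\colon A^*((\pp \E^\vee)^n) \to A^*(\H^\Diamond_{4,7,n})$ for the pullback map we want to show is surjective.

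First I would apply the excision sequence
\[
A^*(\bar\Sigma_{2,n}) \xrightarrow{j_*} A^*(\H^\Diamond_{4,7,n}) \xrightarrow{r} A^*(X_n) \to 0.
\]
Since $r$ factors through $p$ and is surjective by Lemma~\ref{goodopen}, this reduces the problem to showing $j_*(A^*(\bar\Sigma_{2,n})) \subset \im(p)$. A second excision on $\bar\Sigma_{2,n} = \Sigma_{2,n} \sqcup \Sigma_{3,n}$, together with Lemma~\ref{res} for $\ell = 2$, lets me write any $\gamma \in A^*(\bar\Sigma_{2,n})$ as $\gamma = \tilde\gamma|_{\bar\Sigma_{2,n}} + i_{3*}\delta$, where $\tilde\gamma \in A^*((\pp \E^\vee)^n)$, $i_3\colon \Sigma_{3,n} \hookrightarrow \bar\Sigma_{2,n}$, and $\delta \in A^*(\Sigma_{3,n})$. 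Applying Lemma~\ref{res} for $\ell = 3$, I would write $\delta = \tilde\delta|_{\Sigma_{3,n}}$ with $\tilde\delta \in A^*((\pp \E^\vee)^n)$. The projection formula then gives
\[
j_*\gamma \;=\; p(\tilde\gamma) \cdot [\bar\Sigma_{2,n}] \;+\; p(\tilde\delta) \cdot [\Sigma_{3,n}]
\]
in $A^*(\H^\Diamond_{4,7,n})$. It thus suffices to show that the fundamental classes $[\bar\Sigma_{2,n}]$ and $[\Sigma_{3,n}]$ lie in $\im(p)$.

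For this last step, the key observation is that both degeneracy loci are defined purely by splitting-type conditions on $(E,F)$, hence are pulled back from the corresponding closed substacks $\bar\Sigma_\ell^{\B} \subset \B$ along the structure map $\H^\Diamond_{4,7,n} \to \B$. By the commutativity of diagram~\eqref{HB}, this structure map factors as $\H^\Diamond_{4,7,n} \to (\pp \E^\vee)^n \to \B$, so the pullbacks of $[\bar\Sigma_2^{\B}]$ and $[\bar\Sigma_3^{\B}]$ to $(\pp \E^\vee)^n$ restrict under $p$ to $[\bar\Sigma_{2,n}]$ and $[\Sigma_{3,n}]$, respectively, placing both in $\im(p)$. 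The main obstacle I anticipate is precisely this last verification: ensuring that the pulled-back cycles agree with the fundamental classes of the degeneracy loci on the nose (and not up to some nontrivial multiplicity), which amounts to flatness/transversality of the map $\H^\Diamond_{4,7,n} \to \B$ along the splitting-type stratification. This should follow from the explicit iterated affine- and projective-bundle description of $\Sigma_{\ell,n}$ over $\B_{\vec{e},\vec{f}}$ developed in the proof of Lemma~\ref{s1}, together with the analogous description of $X_n$ in Lemma~\ref{goodopen}.
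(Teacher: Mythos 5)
Your overall strategy matches the paper's: a double excision to reduce to showing that the fundamental classes of the closed strata lie in the image of the pullback map, using the projection formula exactly as you do. The paper uses the notation $Y_n = \Sigma_{2,n} \sqcup \Sigma_{3,n}$ where you write $\bar\Sigma_{2,n}$, but the bookkeeping is identical.

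The gap is exactly where you flag it, and your proposed repair does not close it. You want $[\bar\Sigma_{2,n}]$ and $[\Sigma_{3,n}]$ in $A^*(\H^\Diamond_{4,7,n})$ to be pulled back from $\B$, and you correctly worry about flatness/multiplicity along the map $\H^\Diamond_{4,7,n} \to \B$; this map is indeed not flat (the fibers over different splitting-type strata in $\B$ have different dimensions). But the ``iterated affine- and projective-bundle description'' from the proof of Lemma~\ref{s1} only describes each \emph{open} stratum $\Sigma_{\ell,n}$ over the corresponding locally closed $\B_{\vec e,\vec f}$; it gives no control over what happens along the closure of one stratum in another, which is precisely what determines the fundamental class. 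So ``this should follow from the bundle description'' is where the argument would need a genuine new idea.

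The paper sidesteps the problem by not trying to prove the pullback-from-$\B$ statement directly in the marked case. Instead it invokes \cite[Lemma~4.8]{CanningLarson789}, which already establishes that $[\overline{\Sigma_\ell}]$ lies in the image of $A^*(\B) \to A^*(\H_{4,7}^\Diamond)$ for the \emph{unmarked} Hurwitz space (that lemma does the hard multiplicity work), and then observes that $[\Sigma_{3,n}]$ and $[Y_n]$ are obtained from $[\Sigma_3]$ and $[\overline{\Sigma_2}]$ by pulling back along the forgetful map $\H^\Diamond_{4,7,n} \to \H^\Diamond_{4,7}$. That forgetful map is an open substack of the $n$-fold fiber power of the universal curve, hence \emph{is} flat, so fundamental classes do pull back on the nose. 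If you want to repair your write-up, the cleanest fix is to insert this factorization: reduce the marked statement to the unmarked one by flat pullback along $\H^\Diamond_{4,7,n} \to \H^\Diamond_{4,7}$, and then cite (or re-prove) the unmarked degeneracy-class statement.
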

\begin{proof}
First, we fix some notation. Let
\[
\Sigma_{3,n}\xhookrightarrow{\jmath}Y_n\xhookrightarrow{\iota} \H_{4,7,n}^{\Diamond}
\]
denote the natural closed inclusion maps. Let $\phi \colon \H_{4,7,n}^\Diamond \to (\pp \E^\vee)^n$ be the map from the top left to top right in diagram \eqref{HB}.
We let $\phi':X_n\rightarrow (\p \E^{\vee})^n$ be the composite of the open inclusion $X_n \hookrightarrow \H_{4,7,n}^{\Diamond}$ and $\phi$. Let $\psi:=\phi\circ \iota$, and let $\psi'$ be the composite of the open inclusion $\Sigma_{2,n}\hookrightarrow Y_n$ and $\psi$.

Consider the following commutative diagram, where the bottom row is exact.
\begin{equation} \label{md}
\begin{tikzcd}
& A^*((\pp \E^\vee)^n)  \arrow{d}[swap]{\phi^*} \arrow{dr}{\phi'^*} & \\
A^{*-2}(Y_n) \arrow{r}{\iota_*} & A^*(\H_{4,7,n}^{\Diamond}) \arrow{r} & A^*(X_n) \arrow{r} & 0.
\end{tikzcd}
\end{equation}
By Lemma \ref{goodopen}, $\phi'^*$ is surjective.
It thus suffices to show that the image of $\iota_*$ is contained in the image of $\phi^*$. To do so, we consider another commutative diagram where the bottom row is exact.
\begin{equation} \label{md2}
\begin{tikzcd}
& A^{*}((\pp \E^\vee)^n)  \arrow{d}[swap]{\psi^*} \arrow{dr}{\psi'^*} & \\
A^{*-1}(\Sigma_{3,n}) \arrow{r}{\jmath_*} & A^{*}(Y_n) \arrow{r} & A^{*}(\Sigma_{2,n}) \arrow{r} & 0.
\end{tikzcd}
\end{equation}
By Lemma \ref{res}, $\psi'^*$ is surjective. Moreover, by the projection formula and Lemma \ref{res}, the image of $\jmath$ is generated as an $A^*((\p \E^{\vee})^n)$ module by the fundamental class $[\Sigma_{3,n}]\in A^*(Y_n)$. Therefore, any class $\alpha\in A^*(Y_n)$ can be written as
\[
\alpha = \psi^*\alpha_0+[\Sigma_{3,n}]\psi^*\alpha_1=\iota^*\phi^*\alpha_0+[\Sigma_{3,n}]\iota^*\phi^*\alpha_1,
\]
where $\alpha_i\in A^*((\pp \E^\vee)^n)$.
By the projection formula,
\[
\iota_*\alpha=[Y_n]\phi^*\alpha_0+[\Sigma_{3,n}]\phi^*\alpha_1,
\]
where now the fundamental class $[\Sigma_{3,n}]$ is a class on $\H^{\Diamond}_{4,7,n}$.
It thus suffices to show that the classes $[Y_n]$ and $[\Sigma_{3,n}]$ are in the image of $\phi^*$. 

By \cite[Lemma 4.8]{CanningLarson789}, $[\overline{\Sigma_{\ell}}]$ is in the image of $A^*(\B) \to A^*(\H_{4,7}^\Diamond)$. Because $[\Sigma_{3,n}]$ is the pullback of $[\Sigma_3]$ along $A^*(\H_{4,7}^{\Diamond}) \to A^*(\H_{4,7,n}^{\Diamond})$ and $[Y_n]$ is the pullback of $[\overline{\Sigma_{2}}]$, both $[\Sigma_{3,n}]$ and $[Y_n]$ are in the image of $A^*(\B)\to A^*(\H_{4,7,n}^{\Diamond})$. Hence, they are in the image of $\phi^*$. 
\end{proof}

Recall that proper, surjective maps induce surjective maps on rational Chow groups.
Since the map $\beta_n'\colon \H_{4,7,n}^\Diamond \to \M_{7,n} \smallsetminus \M_{7,n}^3$ is proper with image $\M_{7,n}^4 \smallsetminus \M_{7,n}^3$, every class supported on the tetragonal locus is the pushforward of a class from $\H_{4,7,n}^\Diamond$.
Combining Lemmas \ref{tot} and \ref{all} therefore proves the following.

\begin{lem}
If $n \leq 3$, then all classes supported on $\M_{7,n}^4 \smallsetminus \M_{7,n}^3$ are tautological.
\end{lem}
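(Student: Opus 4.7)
The plan is to combine the two key inputs that are already in place: Lemma~\ref{all}, which says that the pullback $\phi^* \colon A^*((\pp \E^\vee)^n) \to A^*(\H_{4,7,n}^\Diamond)$ is surjective for $n \leq 3$, and Lemma~\ref{tot}, which converts any class on $\H_{4,g,n}^\Diamond$ coming from $(\pp \E^\vee)^n$ into a tautological class on $\M_{g,n} \smallsetminus \M_{g,n}^3$ via the pushforward $\beta'_{n*}$.

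First I would use the fact that the forgetful morphism
\[ \beta'_n \colon \H_{4,7,n}^\Diamond \longrightarrow \M_{7,n} \smallsetminus \M_{7,n}^3 \]
is proper and surjects onto the tetragonal locus $\M_{7,n}^4 \smallsetminus \M_{7,n}^3$. Since proper surjective maps of Deligne--Mumford stacks induce surjections on rational Chow groups, any class supported on $\M_{7,n}^4 \smallsetminus \M_{7,n}^3$ is the pushforward $\beta'_{n*}(x)$ of some $x \in A^*(\H_{4,7,n}^\Diamond)$.

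Next, by Lemma~\ref{all}, when $n \leq 3$ we may write $x = \phi^*(y)$ for some $y \in A^*((\pp \E^\vee)^n)$. Lemma~\ref{tot} then applies directly to $x$ and yields that $\beta'_{n*}(x)$ is tautological on $\M_{7,n} \smallsetminus \M_{7,n}^3$, i.e., that the original class is tautological modulo classes supported on $\M_{7,n}^3$. Since the trigonal and hyperelliptic loci $\M_{7,n}^3$ already contribute only tautological classes by the results cited in the ``Hyperelliptic and trigonal loci'' subsection (i.e., \cite[Lemma~9.9]{CL-CKgP}), this completes the argument.

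The only potential subtlety is bookkeeping: ensuring that ``supported on $\M_{7,n}^4 \smallsetminus \M_{7,n}^3$'' is interpreted so that the pushforward from $\H_{4,7,n}^\Diamond$ genuinely captures every such class, and that the statement is really an assertion about the full Chow group $A^*(\M_{7,n})$ rather than just about classes on the open stratum. Both points are handled by the standard excision sequence
\[ A^*(\M_{7,n}^3) \longrightarrow A^*(\M_{7,n} \smallsetminus (\M_{7,n}^4 \smallsetminus \M_{7,n}^3)^c) \longrightarrow A^*(\M_{7,n}^4 \smallsetminus \M_{7,n}^3) \longrightarrow 0, \]
together with the tautologicality result on the trigonal locus, so no new technical obstacle is expected; the proof is essentially immediate from Lemmas~\ref{tot} and \ref{all}.
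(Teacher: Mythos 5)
Your proof is correct and follows essentially the same route as the paper: proper surjectivity of $\beta'_n$ gives that every class supported on the tetragonal locus is a pushforward from $\H_{4,7,n}^\Diamond$, Lemma~\ref{all} writes such a class as a pullback from $(\pp\E^\vee)^n$, and Lemma~\ref{tot} then gives tautologicality. The closing remark about excision is unnecessary (and the middle term of the sequence you wrote is garbled); the lemma as used in the paper only asserts tautologicality of the pushforward to $\M_{7,n}\smallsetminus\M_{7,n}^3$, which Lemma~\ref{tot} supplies directly.
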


\subsection{The pentagonal locus}
It remains to study the locus $\M_{7,n}^\circ = \M_{7,n} \smallsetminus \M_{7,n}^4$  of curves of gonality exactly $5$.
Mukai showed that every curve in $\M_{7}^{\circ}$ is realized as a linear section of the orthogonal Grassmannian in its spinor embedding $\OG(5,10) \hookrightarrow \pp^{15}$ \cite{Mukai7}. 
To take advantage of this construction, we first develop a few lemmas about the orthogonal Grassmannian.

\subsubsection{The orthogonal Grassmannian}
Let $\V$ be the universal rank $10$ bundle on $\BSO_{10}$. The universal orthogonal Grassmannian is the quotient stack $[\OG(5, 10)/\SO_{10}]$, which we think of as the orthogonal Grassmann bundle with structure map $\pi\colon \OG(5,\V) \to \BSO_{10}$. By construction, the pullback of $\V$ along $\pi$ satisfies  $\pi^* \V = \U \oplus \U^{\vee}$ where $\U$ is the universal rank $5$ subbundle on $\OG(5, \V)$.

\begin{lem} \label{n1}
The stack
\[\OG(5,\V) \cong [\OG(5, 10)/\SO_{10}]\]
has the CKgP. Moreover, its Chow ring is freely generated by the Chern classes of $\U$.
\end{lem}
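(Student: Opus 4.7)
The plan is to identify $[\OG(5,10)/\SO_{10}]$ with the classifying stack of a parabolic subgroup of $\SO_{10}$ and then contract to a maximal Levi. Since $\SO_{10}$ acts transitively on the spinor component $\OG(5,10)$ appearing in Mukai's construction, fixing a point $U_0 \in \OG(5,10)$ and letting $P \subset \SO_{10}$ be its stabilizer gives
\[\OG(5,\V) = [\OG(5,10)/\SO_{10}] \cong BP.\]
Here $P$ is a maximal parabolic, with Levi decomposition $P = L \ltimes N$, where $L \cong \GL(U_0) = \GL_5$ and the unipotent radical $N$ is, as a variety, an affine space (it is abelian and isomorphic as a $\GL_5$-representation to $\wedge^2 U_0^\vee$).

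With this identification in hand, the projection $BP \to BL = B\GL_5$ is an affine bundle: its fibers are torsors for the affine group $N$. By Proposition~\ref{CKgPprops}\eqref{affine} and \eqref{BGL}, since $B\GL_5$ has the CKgP, so does $BP \cong \OG(5,\V)$, and the pullback $A^*(B\GL_5) \to A^*(\OG(5,\V))$ is an isomorphism. Since $A^*(B\GL_5) = \qq[c_1,\ldots,c_5]$ is freely generated by the Chern classes of the universal rank $5$ bundle, it remains only to match generators. The fiber of $\U$ over $[U_0]$ is canonically $U_0$ itself, with its natural $L = \GL(U_0)$-action, so $\U$ is pulled back from the tautological bundle on $B\GL_5$. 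Consequently $A^*(\OG(5,\V))$ is freely generated by $c_1(\U), \ldots, c_5(\U)$, as claimed.

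The step requiring the most care is the Lie-theoretic input: justifying that $\SO_{10}$ acts transitively on each of the two spinor components of $\OG(5,10)$, and that the stabilizer of a maximal isotropic subspace in a nondegenerate quadratic space of dimension $2n$ is a parabolic whose Levi is $\GL_n$ acting naturally on the chosen subspace. Both are standard (they underlie the description of the $D_n$ Hermitian symmetric spaces of type $D\mathrm{III}$), and I would simply cite them rather than reprove them. Once this structural input is accepted, the rest of the argument is a clean application of the CKgP toolkit already assembled in Proposition~\ref{CKgPprops}.
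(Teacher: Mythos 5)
Your overall strategy matches the paper's exactly: identify $[\OG(5,10)/\SO_{10}]$ with the classifying stack of the maximal parabolic $P = \mathrm{Stab}_{U_0}$, exhibit an affine bundle relating $BP$ to $B\GL_5$, and apply Proposition~\ref{CKgPprops}\eqref{affine} and \eqref{BGL}. The paper just does the Lie-theoretic input by hand (an explicit block-matrix computation of the stabilizer and its cosets), whereas you invoke the standard Levi decomposition, which is a reasonable shortening.

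However, you have the affine bundle pointing in the wrong direction, and the justification as you state it is incorrect. You write that ``the projection $BP \to BL = B\GL_5$ is an affine bundle: its fibers are torsors for the affine group $N$.'' This is not right. The map $BP \to BL$ induced by the Levi quotient $P \twoheadrightarrow L$ has fiber $BN$, the classifying stack of the unipotent radical, not an $N$-torsor; it is a gerbe, not an affine bundle (the paper notes this explicitly in the remark following the lemma). The map that actually is an affine bundle is the one induced by the inclusion $L \hookrightarrow P$ of the Levi subgroup, namely $B\GL_5 \to BP$, whose fiber is the coset space $P/L \cong N \cong \mathbb{A}^{10}$; this is the map the paper uses. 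Because Proposition~\ref{CKgPprops}\eqref{affine} is an ``if and only if,'' the CKgP conclusion is unaffected once you correct the direction, and the identification $A^*(BP) \cong A^*(B\GL_5)$ with generators the Chern classes of $\U$ then goes through as you describe. But as written, the step invoking the affine-bundle property is applied to a map for which it fails.
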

\begin{proof}
Let $V = \mathrm{span}\{e_1, \ldots, e_{10}\}$ be a fixed $10$-dimensional vector space with quadratic form
\[Q = \left(\begin{matrix} 0 & I_5 \\ I_5 & 0 \end{matrix}\right).\]
Let $U = \mathrm{span}\{e_1, \ldots, e_5\}$, which is an isotropic subspace. The stabilizer of $\SO_{10}$ acting on $\OG(5,10)$ at $U$ is
\[\mathrm{Stab}_U = \left\{M = \left(\begin{matrix} A & B \\ 0 & D \end{matrix}\right) : M^TQM =Q\right\} \subset \SO_{10}.   \]
Expanding, we have
\[M^T Q M = \left(\begin{matrix} A^T & 0 \\ B^T & D^T \end{matrix}\right)\left(\begin{matrix} 0 & I_5 \\ I_5 & 0 \end{matrix}\right)\left(\begin{matrix} A & B \\ 0 & D \end{matrix}\right) = \left( \begin{matrix} 0 & A^TD \\ D^TA & B^TD + D^TB \end{matrix}\right).\]
Thus, $\mathrm{Stab}_U$ is defined by the conditions $D = (A^T)^{-1}$ and $B^TD + D^TB = 0$. 

Note that $\mathrm{Stab}_U$ is a maximal parabolic subgroup and $\OG(5, 10) = \SO_{10}/\mathrm{Stab}_U$.
As such, the quotient $[\OG(5,10)/\SO_{10}]$ is equivalent to the classifying stack $\mathrm{BStab}_U$. To gain a better understanding of the latter, consider
the group homomorphism 
\[\GL_5 \hookrightarrow \mathrm{Stab}_U, \qquad A\mapsto \left(\begin{matrix} A & 0 \\ 0 & (A^T)^{-1}\end{matrix}\right).\] 
For fixed $D$, the condition $B^TD + D^TB = 0$ is linear in $B$. Specifically, it says that, $B$ lies in the $(D^{T})^{-1}$ translation of the $\mathbb{A}^{10}$ of skew symmetric $5 \times 5$ matrices.
In particular, the cosets of the subgroup $\GL_5 \hookrightarrow \mathrm{Stab}_U$ are isomorphic to affine spaces $\mathbb{A}^{10}$. In other words, the induced map on classifying spaces $\BGL_5 \to \mathrm{BStab}_U \cong \OG(5, \V)$ is an affine bundle.
It follows that 
$\OG(5, \V)$ has the CKgP by 
Proposition \ref{CKgPprops}\eqref{affine} and \eqref{BGL}.

Furthermore,
 by construction, the tautological subbundle $\U$ on $\OG(5, \V)$ pulls back to the tautological rank $5$ bundle on $\BGL_5$.
It follows that
$A^*(\mathrm{BStab}_U) \cong A^*(\mathrm{BGL}_5)$, and is freely generated by the Chern classes of the tautological bundle.
\end{proof}

\begin{rem}
There is also a natural map $\mathrm{Stab}_U \to \GL_5$ that sends $\left(\begin{matrix} A & B \\ 0 & D \end{matrix}\right)$ to $A$. The kernel of $\mathrm{Stab}_U \to \GL_5$  is the subgroup $G \cong (\gg_a)^{10}$ where $A = I_5, D = I_5$ and $B + B^T = 0$. This shows that $\mathrm{Stab}_U$ is actually a semi-direct product $G \rtimes \GL_5$. The map $\mathrm{BStab}_U \to \BGL_5$ is a $\mathrm{B}G$-banded gerbe.
\end{rem}

\begin{lem} \label{OGbun}
Let $V$ be a rank $2\nu$ vector bundle with quadratic form on $X$. 
\begin{enumerate}
    \item The rational Chow ring of $\OG(\nu, V)$ is generated over the Chow ring of $X$ by the Chern classes of the tautological subbundle.
    \item If $X$ has the CKgP, then $\OG(\nu, V)$ has the CKgP.
\end{enumerate}
\end{lem}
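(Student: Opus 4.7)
The plan is to reduce to the universal setting (Lemma \ref{n1}) via base change along the classifying morphism of the quadratic bundle, and then to analyze the resulting fibration through the complete isotropic flag bundle.

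First, I would observe that the proof of Lemma \ref{n1} extends verbatim to arbitrary $\nu$: the same block-matrix computation shows that $\mathrm{Stab}_U \subset \SO_{2\nu}$ is the semi-direct product $(\gg_a)^{\binom{\nu}{2}} \rtimes \GL_\nu$, with the unipotent factor parameterizing $\nu \times \nu$ skew-symmetric matrices. Consequently $\OG(\nu, \V) \simeq \mathrm{B}\!\mathrm{Stab}_U$ admits an affine bundle map $\BGL_\nu \to \OG(\nu, \V)$, so by Proposition \ref{CKgPprops}\eqref{affine} and \eqref{BGL} the universal orthogonal Grassmannian has the CKgP and its Chow ring is freely generated over $\qq$ by the Chern classes of $\U$.

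After an \'etale double cover of $X$ to choose an orientation (which preserves both conclusions by Proposition \ref{CKgPprops}\eqref{proper}), the quadratic bundle $V$ classifies a morphism $f\colon X \to \BSO_{2\nu}$, giving a Cartesian square
\[
\begin{tikzcd}
\OG(\nu, V) \arrow{r}\arrow{d} & \OG(\nu, \V)\arrow{d}\\
X \arrow{r}{f} & \BSO_{2\nu}
\end{tikzcd}
\]
in which the tautological subbundle on $\OG(\nu, V)$ pulls back from $\U$.

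For both parts, I would pass through the complete isotropic flag bundle $\mathrm{IF}(V) \to X$. The forgetful map $\mathrm{IF}(V) \to \OG(\nu, V)$ sending a flag to its top piece is the complete flag bundle of $\U$, hence an iterated projective bundle. By Proposition \ref{CKgPprops}\eqref{Grassmann} together with \eqref{proper}, $\OG(\nu, V)$ has the CKgP iff $\mathrm{IF}(V)$ does, and $A^*(\OG(\nu, V))$ is the $\mathbb{S}_\nu$-invariant subring of $A^*(\mathrm{IF}(V))$ under permutation of the Chern roots of $\U$. In turn, $\mathrm{IF}(V) \to X$ is the tower
\[
\mathrm{IF}_\nu(V) \to \mathrm{IF}_{\nu-1}(V) \to \cdots \to \mathrm{IF}_1(V) \to X,
\]
in which each step $\mathrm{IF}_k(V) \to \mathrm{IF}_{k-1}(V)$ is a smooth quadric bundle of relative dimension $2\nu - 2k$ inside a projective bundle (namely $\OG(1, W_{k-1}^\perp/W_{k-1})$). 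Iterating along this tower reduces the lemma to CKgP and a Chow-ring generation statement for smooth quadric bundles, which can be handled via the projective bundle formula, excision, and Proposition \ref{CKgPprops}\eqref{open}, \eqref{affine}, \eqref{stratification} applied to the cellular decomposition of smooth quadrics.

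The main obstacle is the even-dimensional quadric bundle step: each fiber $Q_{2\nu-2k}$ has even dimension, and its Chow ring carries an extra class in the middle degree (the class of a maximal linear subspace) which is not generated by the hyperplane class alone. This extra class must be produced from the remaining layers of the isotropic flag via an inductive argument, and the key technical point is to show that after taking $\mathbb{S}_\nu$-invariants these classes combine into polynomials in the Chern classes of $\U$, so that the Chow ring of $\OG(\nu, V)$ is indeed generated over $A^*(X)$ by those Chern classes.
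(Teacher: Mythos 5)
Your proposal diverges significantly from the paper's argument, and it contains a gap that you yourself flag but do not close.

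The paper's proof is direct and avoids the flag bundle entirely. For $X$ a point, it notes that $\OG(\nu,2\nu)$ has an affine Schubert cell decomposition, so Proposition~\ref{CKgPprops}\eqref{stratification} gives the CKgP, and then invokes the Giambelli formula for maximal orthogonal Grassmannians (which over $\qq$, where one may divide by $2$, expresses Schubert classes as polynomials in the Chern classes of $\U$). For general $X$, it stratifies the base $X$ into locally closed pieces $X_i$ of codimension $\geq i$ over which $V$ is trivial, observes that $f^{-1}(X_i) = X_i \times \OG(\nu,2\nu)$ and uses the CKgP of $\OG(\nu,2\nu)$ on each piece, then glues by excision and the push--pull formula, noting that generation in degree $k$ is detected on $f^{-1}(X_0\cup\cdots\cup X_k)$. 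Part (2) then follows formally, exactly as in the type-A case.

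Your flag-bundle route through $\mathrm{IF}(V)\to X$ hits exactly the obstacle you name, and that obstacle is fatal to the argument as written: each intermediate step $\mathrm{IF}_k(V)\to\mathrm{IF}_{k-1}(V)$ is a smooth quadric bundle of \emph{even} relative dimension $2\nu-2k$, and there is no ``quadric bundle formula'' analogous to the projective bundle formula. The Chow ring of an even quadric bundle is not generated over the base by the relative hyperplane class --- there is an extra middle-degree class coming from a ruling, and a priori it cannot be expressed in terms of the other layers without further input. Saying that it ``must be produced from the remaining layers of the isotropic flag via an inductive argument'' and then ``combine into polynomials in the Chern classes of $\U$ after taking $\mathbb{S}_\nu$-invariants'' is precisely what would need to be proved, and you have not supplied the argument. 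Without it, neither the CKgP claim nor the generation claim for $\mathrm{IF}(V)$ over $X$ is established, and nothing descends to $\OG(\nu,V)$.

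Two smaller remarks. First, your initial step (extending Lemma~\ref{n1} to the universal $\OG(\nu,\V)$ over $\BSO_{2\nu}$) is correct, but you never actually use it: the flag-bundle tower proceeds from scratch. The natural way to exploit that observation is exactly what the paper does --- stratify $X$ over pieces where $V$ trivializes and reduce to the constant case --- rather than introducing the flag bundle. Second, the \'etale double cover to fix an orientation is harmless but unnecessary once one stratifies $X$ into loci where $V$ is trivial as a quadratic bundle; trivialization already carries a choice of orientation, and the paper handles this implicitly.
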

\begin{proof}
The argument is very similar to that for Grassmannians in type A. 

First consider the case when $X$ is a point. The orthogonal Grassmannian $\OG(\nu, 2\nu)$ is stratified by Schubert cells, each of which is isomorphic to an affine space. By Proposition \ref{CKgPprops}\eqref{stratification}, it follows  that $\OG(\nu, 2\nu)$ has the CKgP.
Moreover, the fundamental classes of these cells are expressed in terms of the Chern classes of the tautological quotient or subbundle via a Giambelli formula \cite[p. 1--2]{KT}. Note that this formula involves dividing by $2$, so it is important that we work with rational coefficients for this claim. In conclusion, the Chern classes of the tautological subbundle generate the Chow ring of $\OG(\nu, 2\nu)$.

More generally for a fiber bundle $f\colon \OG(\nu, V) \to X$, to prove (1), we stratify $X$ into locally closed subsets $X_i$ over which $V$ is trivial, such that $\overline X_i  \supset X_j$ for $i \leq j$ and $X_i$ has codimension at least $i$. To check that the desired classes generate $A^k(\OG(\nu, V))$ for a given $k$, it suffices to show that they generate $A^k(f^{-1}(X_0 \cup X_1 \cup \cdots \cup X_k))$.

Over each piece of the stratification, $f^{-1}(X_i) = X_i \times \OG(\nu, 2\nu)$. Since $\OG(\nu,2\nu)$ has the CKgP, the Chow ring of $f^{-1}(X_i) = X_i \times \OG(\nu, 2\nu)$ is generated by $A^*(X_i)$ and restrictions of the Chern classes from the tautological subbundle on $\OG(\nu, V)$. By excision and the push-pull formula, the Chow ring of any finite union $f^{-1}(X_0) \cup f^{-1}(X_1) \cup \cdots \cup f^{-1}(X_k)$ is generated by the desired classes.

Finally (2) follows from (1) exactly as in \cite[Lemma 3.7]{CL-CKgP}. 
\end{proof}

\begin{cor} \label{ogn}
For any $n \geq 1$, the $n$-fold fiber product
\[\OG(5, \V)^n := \OG(5, \V) \times_{\BSO_{10}} \cdots \times_{\BSO_{10}} \OG(5, \V) \]
has the CKgP and its Chow ring is generated by the Chern classes of the tautological subbundles $\U_1, \ldots, \U_n$ pulled back from each factor.
\end{cor}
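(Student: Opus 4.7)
The plan is to proceed by induction on $n$. Write $X_n := \OG(5,\V)^n$ for the $n$-fold fiber product over $\BSO_{10}$, with structure map $\pi_n \colon X_n \to \BSO_{10}$, and let $\U_i$ denote the pullback to $X_n$ of the universal rank $5$ subbundle from the $i$th factor. The key observation is that, for each $n \geq 1$, there is a natural identification
\[
X_{n+1} \;=\; X_n \times_{\BSO_{10}} \OG(5,\V) \;\cong\; \OG(5,\, \pi_n^* \V),
\]
exhibiting $X_{n+1}$ as the orthogonal Grassmann bundle of isotropic $5$-planes in the rank $10$ quadratic bundle $\pi_n^*\V$ on $X_n$. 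Under this identification, the tautological subbundle on $\OG(5, \pi_n^*\V)$ is precisely $\U_{n+1}$.

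The base case $n = 1$ is exactly Lemma \ref{n1}, which gives that $X_1 = \OG(5,\V)$ has the CKgP and that $A^*(X_1)$ is freely generated by the Chern classes of $\U = \U_1$. For the inductive step, assume $X_n$ has the CKgP and that $A^*(X_n)$ is generated by the Chern classes of $\U_1, \ldots, \U_n$. By Lemma \ref{OGbun}(2) applied to the quadratic bundle $\pi_n^*\V$ on $X_n$, the orthogonal Grassmann bundle $\OG(5, \pi_n^*\V) = X_{n+1}$ has the CKgP. By Lemma \ref{OGbun}(1), $A^*(X_{n+1})$ is generated over $A^*(X_n)$ by the Chern classes of its tautological subbundle, which is $\U_{n+1}$. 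Combined with the inductive hypothesis on $A^*(X_n)$, this shows $A^*(X_{n+1})$ is generated by the Chern classes of $\U_1, \ldots, \U_{n+1}$, completing the induction.

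There is no real obstacle here once the fiber product is recognized as an orthogonal Grassmann bundle; both assertions reduce immediately to iterating Lemma \ref{OGbun}. The only point to verify cleanly is that, under the identification $X_{n+1} \cong \OG(5, \pi_n^*\V)$, the tautological subbundle on the Grassmann bundle coincides with the pullback $\U_{n+1}$ from the last factor, which is built into the construction of the universal orthogonal Grassmannian over $\BSO_{10}$.
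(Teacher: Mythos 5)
Your proof is correct and follows exactly the same route as the paper's: Lemma \ref{n1} gives the base case $n=1$, and the inductive step recognizes $X_{n+1}$ as the orthogonal Grassmann bundle $\OG(5,\pi_n^*\V)$ over $X_n$ and invokes Lemma \ref{OGbun}. You have simply written out the induction in more detail than the paper does.
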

\begin{proof}
The case $n = 1$ follows from Lemma \ref{n1}. For $n > 1$, the $n$-fold fiber product is an orthogonal Grassmann bundle over the $(n-1)$-fold fiber product, so the claim follows from Lemma \ref{OGbun}.
\end{proof}

\begin{rem}
By Proposition \ref{CKgPprops}\eqref{proper}, the fact that $\OG(5, \V)$ has the CKgP implies that $\BSO_{10}$ also has the CKgP. 
It should be possible to show that $\BSO_{10}$ has the CKgP (with integral coefficients as well) using the calculation of its Chow ring by Field \cite{Field}. If the calculation there holds over any field, it would show that $\BSO_{10}$ has Totaro's ``weak Chow--K\"unneth Property," which is equivalent to the CKgP by the proof of \cite[Theorem 4.1]{TotaroCKgP}.
\end{rem}

\subsubsection{Review of the Mukai construction}
We first review Mukai's construction and then explain how to modify it for pointed curves.
The canonical model of a pentagonal genus $7$ curve $C \subset \pp^6$ lies on a $10$-dimensional space of  quadrics.
The vector space $W$ of these quadrics is defined by the exact sequence 
\begin{equation} \label{W} 0 \rightarrow W \rightarrow \Sym^2 H^0(C, \omega_C) \to H^0(C, \omega_C^{\otimes 2}) \rightarrow 0.
\end{equation}
For each $p \in C$, the subspace $W_p \subset W$ of quadrics that are singular at $p$ is $5$-dimensional. It appears in the exact sequence (see \cite[Section 3]{Mukai7})
\begin{equation} \label{Wp} 0 \rightarrow W_p \rightarrow \Sym^2 H^0(C, \omega_C(-p)) \to H^0(C,\omega_C(-p)^{\otimes 2}) \rightarrow 0.
\end{equation}
Mukai shows that $W^\vee$ has a canonical quadratic form and $W_p^\perp$ is an isotropic subspace, so one obtains a map $C \to \OG(5, W^\vee)$ via $p \mapsto [W_p^\perp] \in \OG(5, W^\vee)$ \cite[Theorem 0.4]{Mukai7}.

Let $\OG(5, 10) \hookrightarrow \pp S^+$ be the spinor embedding, as in \cite[Section 1]{Mukai7}.
The composition 
\[C \to \OG(5, W^\vee) \cong \OG(5, 10) \hookrightarrow \pp S^+\]
realizes $C$ as a linear section $C = \OG(5,10) \cap \pp^6 \subset \pp S^+$, and $C \subset \pp^6$ is canonically embedded \cite[Theorem 0.4]{Mukai7}. Conversely, if a linear section $\pp^6 \cap \OG(5, 10)$ is a smooth curve, then it is a canonically embedded pentagonal genus $7$ curve \cite[Proposition 2.2]{Mukai7}.
This construction works in families, and Mukai proves that there is an equivalence of stacks
\begin{equation} \label{m1} 
\M_{7}^\circ \cong [(\Gr(7, S^+) \smallsetminus \Delta)/\SO_{10}],
\end{equation}
where $\Delta \subset \Gr(7, S^+)$ is the closed locus of linear subspaces $\pp^6 \subset \pp S^+$ whose intersection with $\OG(5, 10)$ is not a smooth curve \cite[Section 5]{Mukai7}.

The
spinor representation of $\SO_{10}$ corresponds to a rank $16$ vector bundle $\S^+$ on the classifying stack $\BSO_{10}$.
Then, the equivalence \eqref{m1} shows that $\M_7^\circ$ is an open substack of the Grassmann bundle $\Gr(7, \S^+)$ over $\BSO_{10}$:
\begin{equation} \label{gbun}
\begin{tikzcd} \M_7^\circ \ar[hook,r,"\alpha_0"] & \Gr(7, \S^+) \arrow{d} \\ & \BSO_{10}.
\end{tikzcd}
\end{equation}
We note that the tautological subbundle $\E$ on $\Gr(7, \S^+)$ restricts to the dual of the Hodge bundle on $\M_7^\circ$. In other words, if $f\colon \C \to \M_7^{\circ}$ is the universal curve, then $\alpha_0^*\E = (f_*\omega_f)^\vee$.
The universal version of the Mukai construction furnishes a map $\C \to \OG(5, \V)$ over $\BSO_{10}$.

\subsubsection{The Mukai construction with markings}
For $n \leq 4$, we describe $\M_{7,n}^{\circ}$ in a similar fashion to \eqref{gbun}, but this time as an open substack of a Grassmann bundle over $\OG(5,\V)^n$. 
To do so, let $\iota\colon \OG(5, \V) \hookrightarrow \pp \S^+$ be the universal spinor embedding and let $\L := \iota^*\O_{\pp \S^+}(-1)$. The embedding $\iota$ is determined by an inclusion of vector bundles $\L \hookrightarrow \pi^*\S^+$ on $\OG(5, \V)$.
On the $n$-fold fiber product $\OG(5, \V)^n$, let $\L_i$ and $\U_i$ denote the pullbacks of $\L$ and $\U$, respectively, from the $i$th factor. Let $\pi_n \colon \OG(5, \V)^n \to \BSO_{10}$ be the structure map.
 Consider the sum of the inclusions
 \begin{equation} \label{phin}
\bigoplus_{i=1}^n \L_i \xrightarrow{\phi_n} \pi_n^* \S^+.
 \end{equation}
Let $Z_n \subset \OG(5,\V)^n$ be the open substack where $\phi_n$ has rank $n$. In other words, $Z_n$ is the locus where the $n$ points on $\OG(5, \V)$ have independent image under the spinor embedding.

Let $\Q_n$ be the cokernel of $\phi_n|_{Z_n}$, which is a rank $16 - n$ vector bundle on $Z_n$. The fiber of $\Gr(7 - n, \Q_n)$ over $(p_1, \ldots, p_n) \in \OG(5, \V)^n$ parametrizes linear spaces $\pp^6 \subset \pp S^+$ that contain the $n$ points $p_i$.
Thus, we can identify $\Gr(7-n,\Q_n)$ with the locally closed substack
\begin{equation} \label{lc} \left\{(p_1, \ldots, p_n, \Lambda)  : p_i \in \pp \Lambda \text{ and $p_i$ independent} \right\} \subset  
\OG(5, \V)^n \times_{\BSO_{10}} \Gr(7, \S^+).
\end{equation}

\begin{lem} \label{7nckgp}
For $n \leq 4$, there is an open embedding $\alpha_n$ of $\M_{7,n}^\circ$ in the Grassmann bundle
\begin{center}
\begin{tikzcd}
\M_{7,n}^{\circ} \ar[r,hook, "\alpha_n"] & \Gr(7-n, \Q_n) \arrow{d} \\
& Z_n \ar[r,hook] &\OG(5, \V)^n.
\end{tikzcd}
\end{center}
Hence, $\M_{7,n}^{\circ}$ has the CKgP.
\end{lem}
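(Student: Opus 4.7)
The plan is to construct the map $\alpha_n$ explicitly via a family version of Mukai's construction and then verify it is an open immersion. Given a family of pointed pentagonal genus $7$ curves $(C, p_1, \ldots, p_n)$ over a base $T$, I would first apply the universal Mukai map $\mu \colon \C \to \OG(5, \V)$ over $\BSO_{10}$ to the pullback family, yielding via the marked sections $p_i$ an $n$-tuple of morphisms $T \to \OG(5, \V)$ over $\BSO_{10}$. Combined with the $7$-dimensional subspace of $\S^+$ that cuts out the canonical $\pp^6 \supset C$, these data define a morphism $T \to \OG(5,\V)^n \times_{\BSO_{10}} \Gr(7, \S^+)$.

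The next step is to check that this morphism factors through the subfunctor $\Gr(7-n, \Q_n)$ identified in \eqref{lc}. Since Mukai's theorem identifies the composition $C \xrightarrow{\mu} \OG(5, \V) \hookrightarrow \pp \S^+$ with the canonical embedding, the images $\mu(p_i)$ lie in the canonical $\pp^6$, so the relevant $7$-dimensional linear span contains all of them. For the claim that the $\mu(p_i)$ are themselves linearly independent, so that the morphism factors through $Z_n$, I would use a short Brill--Noether argument: for distinct points on a canonically embedded curve, linear dependence is equivalent to $h^0(p_1 + \cdots + p_n) \geq 2$, which would produce a $g^1_m$ with $m \leq n \leq 4$ and contradict the hypothesis of gonality exactly $5$.

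To see that $\alpha_n$ is an open immersion, I would construct an inverse on the open substack $U \subset \Gr(7-n, \Q_n)$ parametrizing tuples $(p_1, \ldots, p_n, \Lambda)$ for which $\pp \Lambda \cap \OG(5, 10)$ is smooth: by \cite[Proposition 2.2]{Mukai7} such an intersection is a canonically embedded pentagonal genus $7$ curve, and the $n$ distinct specified points on it provide the markings. Since Mukai already established the unmarked equivalence \eqref{m1}, this verification reduces formally to the $n = 0$ case plus bookkeeping of the marked sections. Finally, the CKgP statement chains together the earlier technology: Corollary~\ref{ogn} gives the CKgP for $\OG(5, \V)^n$, Proposition~\ref{CKgPprops}\eqref{open} transfers it to the open substack $Z_n$, Proposition~\ref{CKgPprops}\eqref{Grassmann} yields it for the Grassmann bundle $\Gr(7-n, \Q_n)$, and one more application of Proposition~\ref{CKgPprops}\eqref{open} gives it for $\M_{7,n}^\circ$. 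I expect the main point requiring care is the independence check for the $\mu(p_i)$ in the spinor embedding, since this is precisely where the hypothesis $n \leq 4$ enters and where one must combine Mukai's identification with a Brill--Noether input.
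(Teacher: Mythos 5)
Your proposal follows essentially the same route as the paper: apply the universal Mukai map to the marked sections to get a morphism to $\OG(5,\V)^n \times_{\BSO_{10}} \Gr(7,\S^+)$, check it lands in the locus \eqref{lc} via the Brill--Noether observation that dependence of the $\mu(p_i)$ under the canonical embedding would give a $g^1_m$ with $m \leq n \leq 4$, contradicting gonality $5$, then exhibit the inverse on the complement of the discriminant and chain together Corollary~\ref{ogn} with Proposition~\ref{CKgPprops}. Your phrasing of the independence step via geometric Riemann--Roch (``$h^0(p_1+\cdots+p_n)\geq 2$'') is a slightly more explicit rendering of the same argument, and everything else matches the paper's proof.
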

\begin{proof}
Let $f\colon \C \to \M_{7,n}^{\circ}$ be the universal curve and let $\sigma_i\colon \M_{7,n}^{\circ} \to \C$ be the $i$th section.
The universal version of the Mukai construction gives a morphism $\C \to \OG(5, \V)$. Precomposing with each of the sections $\sigma_i$ defines a map $\M_{7, n}^{\circ} \to \OG(5,\V)^n$ over $\BSO_{10}$. We claim that, for $n \leq 4$, the images of $\sigma_1, \ldots, \sigma_n$ must be independent. Indeed, suppose the images of the sections in a fiber, $p_1, \ldots, p_n \in C$, are dependent under the canonical embedding. Then $p_1 + \cdots + p_n$ would give a $g^1_n$ on $C$, but curves in $\M_{7,n}^{\circ}$ have no $g^1_n$ for $n \leq 4$ by definition.

We also have the map $\M_{7,n}^{\circ} \to \M_7^\circ \to \Gr(7, \S^+)$ that sends a curve to its span under the spinor embedding.
Taking the product of these maps over $\BSO_{10}$ yields a map \[\M_{7,n}^\circ \to \OG(5, \V)^n \times_{\BSO_{10}} \Gr(7, \S^+).\]
This map sends a family of pointed curves $(C, p_1, \ldots, p_n)$ over a scheme $T$ to the data of sections $p_i\colon T \to C \to \OG(5, W^\vee)$ and the subbundle of the spinor representation of $W^\vee$ determined by the span of the fibers of $C \subset \OG(5, W^\vee) \subset \pp S^+$ over $T$. The map evidently factors through the locally closed locus in \eqref{lc}, which we identified with $\Gr(7 - n, \Q_n)$. In fact, the image is precisely $\Gr(7 - n, \Q_n) \smallsetminus \Delta$, where $\Delta$ is the closed locus such that $\pp \Lambda \cap \OG(5, W^\vee)$ is not a family of smooth curves. Indeed, on the complement of $\Delta$, an inverse map $\Gr(7 - n, \Q_n) \smallsetminus \Delta \to \M_{7,n}^{\circ}$ is defined
by sending $(p_1, \ldots, p_n, \Lambda)$ to the curve $C = \pp \Lambda \cap \OG(5, W^\vee)$ together with the sections $p_i \in \pp \Lambda \cap \OG(5, W^\vee) = C$. 
 
By Corollary \ref{ogn}, we know $\OG(5, \V)^n$ has the CKgP. To complete the proof, apply Proposition \ref{CKgPprops}\eqref{open} and \eqref{Grassmann}.
\end{proof}

We now identify the restrictions of the universal bundles on $\OG(5, \V)^n$ and $\Gr(7 - n, \Q_n)$ to $\M_{7,n}^{\circ}$ along $\alpha_n$. 
\begin{lem} \label{Ui}
Let $f\colon \C \to \M_{7,n}^{\circ}$ be the universal curve and let $\sigma_i$ denote the image of the $i$th section.
The vector bundle $\alpha_n^*\, \U_i$ sits in an exact sequence
\begin{equation} \label{wprel}0 \rightarrow \alpha_n^* \, \U_i \rightarrow \Sym^2 f_*(\omega_f(-\sigma_i)) \rightarrow f_*((\omega_f(-\sigma_i))^{\otimes 2}) \rightarrow 0.
\end{equation}
In particular, all classes pulled back from $\OG(5, \V)^n$ to $\M_{7,n}^\circ$ are tautological.
\end{lem}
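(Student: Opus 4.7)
My plan is to relativize Mukai's sequence \eqref{Wp} to the universal family $f\colon \C \to \M_{7,n}^{\circ}$ with sections $\sigma_1, \ldots, \sigma_n$ and identify the pullback $\alpha_n^*\U_i$ with the resulting kernel bundle. By \eqref{Wp}, for each $(C, p_1, \dots, p_n) \in \M_{7,n}^{\circ}$ the multiplication map $\Sym^2 H^0(\omega_C(-p_i)) \to H^0(\omega_C(-p_i)^{\otimes 2})$ is surjective with $5$-dimensional kernel $W_{p_i}$. Constancy of these ranks over $\M_{7,n}^{\circ}$, together with cohomology-and-base-change, gives that $f_*\omega_f(-\sigma_i)$ and $f_*(\omega_f(-\sigma_i))^{\otimes 2}$ are vector bundles and that the relative multiplication
\[
\mu_i \colon \Sym^2 f_*\omega_f(-\sigma_i) \to f_*(\omega_f(-\sigma_i))^{\otimes 2}
\]
is surjective with rank $5$ locally free kernel $\mathcal{K}_i$ whose fiber at $(C, p_1, \dots, p_n)$ is $W_{p_i}$.

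The crux of the proof is showing that $\alpha_n^*\U_i \cong \mathcal{K}_i$. Relativizing \eqref{W} yields a rank $10$ bundle $\mathcal{W} \subset \Sym^2 f_*\omega_f$ on $\M_{7,n}^{\circ}$, with $\mathcal{K}_i \subset \mathcal{W}$; and $\mathcal{W}^\vee$ carries the canonical nondegenerate quadratic form constructed by Mukai in \cite[Sections 2--3]{Mukai7}. The universal Mukai map $p_i \mapsto [W_{p_i}^\perp]$ globalizes to a morphism agreeing with $\alpha_n$, realizing $\alpha_n^*\U_i$ as a rank $5$ isotropic subbundle of $\mathcal{W}^\vee$ with fibers $W_{p_i}^\perp$. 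The canonical isomorphism $\mathcal{W}^\vee \cong \mathcal{W}$ induced by the quadratic form then identifies this subbundle with $\mathcal{K}_i$, giving the asserted short exact sequence. The most delicate step will be verifying that Mukai's pointwise identifications assemble into an isomorphism of subbundles, not merely of abstract vector bundles of the same rank.

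For the final assertion, Corollary \ref{ogn} shows that $A^*(\OG(5,\V)^n)$ is generated as a ring by the Chern classes of $\U_1, \dots, \U_n$. Applying Grothendieck--Riemann--Roch to $f$, the Chern characters of $f_*\omega_f(-\sigma_i)$ and $f_*(\omega_f(-\sigma_i))^{\otimes 2}$ lie in $R^*(\M_{7,n}^{\circ})$, and hence so does $\ch(\Sym^2 f_*\omega_f(-\sigma_i))$. The short exact sequence just established then expresses $\ch(\alpha_n^*\U_i)$ as a tautological class, so every class pulled back from $\OG(5,\V)^n$ to $\M_{7,n}^{\circ}$ along $\alpha_n$ is tautological.
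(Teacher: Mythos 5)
Your proposal is correct and follows essentially the same route as the paper, though it is more careful about one point the paper glosses over. The paper's proof simply records that the fiber of $\alpha_n^*\U_i$ at $(C,p_1,\ldots,p_n)$ is $W_{p_i}^\perp\subset W^\vee$ and then asserts that \eqref{wprel} is ``the relative version of \eqref{Wp}.'' This tacitly uses that the Lagrangian subspace $W_{p_i}^\perp\subset W^\vee$ is carried to $W_{p_i}\subset W$ under the self-duality of $W^\vee$ coming from Mukai's quadratic form. You make this identification explicit: since $W_{p_i}^\perp$ is Lagrangian, the isomorphism $\mathcal{W}^\vee\cong\mathcal{W}$ sends it to its annihilator, which is $W_{p_i}$, and this assembles fiberwise into an isomorphism of subbundles $\alpha_n^*\U_i\cong\mathcal{K}_i$. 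That is exactly the ``delicate step'' you flag, and your treatment of it is the content that the paper leaves implicit. The remainder — surjectivity and local freeness of the relative multiplication map via constancy of ranks and cohomology-and-base-change, GRR to conclude the middle and right terms have tautological Chern classes, and Corollary~\ref{ogn} to finish — matches the paper.
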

\begin{proof}
The composition of $\alpha_n$ with projection onto the $i$th factor of $Z_n$ is the map that sends a pointed curve $(C, p_1, \ldots, p_n)$ to the image of $p_i$ under the canonical map from $C$ to $\OG(5, W^\vee) \cong \OG(5, 10)$. By construction, the fiber at $p_i$ of the universal rank $5$ bundle on $\OG(5,10)$ is $W_p^\perp$. Equation \eqref{wprel} is the relative version of \eqref{Wp}. By Grothendieck--Riemann--Roch, the middle and right terms in \eqref{wprel} have tautological Chern classes.
It follows that the Chern classes of $\alpha_n^*\U_i$ are also tautological. The last claim now follows from Corollary \ref{ogn}.
\end{proof}

\begin{lem} \label{Lpsi}
We have $c_1(\alpha_n^*\L_i) = -\psi_i$ in $A^1(\M_{7,n}^{\circ})$.
\end{lem}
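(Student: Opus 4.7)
The plan is to identify $\alpha_n^*\L_i$ with the pullback along $\sigma_i$ of the anti-canonical bundle on the universal curve, and then invoke the definition of the $\psi$-class. The main input is the well-known fact that under a canonical embedding $\O(1)$ restricts to the canonical bundle, which I will apply in the relative setting to the universal Mukai construction.

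First, I would factor the composition defining $\alpha_n^*\L_i$. By construction of $\alpha_n$, the projection $\mathrm{pr}_i \circ \alpha_n \colon \M_{7,n}^{\circ} \to \OG(5,\V)$ sends a pointed curve $(C, p_1, \ldots, p_n)$ to the image of $p_i$ under the relative Mukai morphism $\phi \colon \C \to \OG(5,\V)$. In other words, $\mathrm{pr}_i \circ \alpha_n = \phi \circ \sigma_i$, where $\sigma_i\colon \M_{7,n}^{\circ} \to \C$ is the $i$th section of $f\colon \C \to \M_{7,n}^{\circ}$. Composing with the spinor embedding $\iota$, we obtain
\[
\alpha_n^*\L_i = \alpha_n^*\mathrm{pr}_i^*\iota^*\O_{\pp\S^+}(-1) = \sigma_i^*(\iota \circ \phi)^*\O_{\pp \S^+}(-1).
\]

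Next, I would identify $(\iota \circ \phi)^*\O_{\pp\S^+}(-1)$ on $\C$. Fiberwise, $\iota \circ \phi$ factors through the linear embedding $\pp\Lambda \hookrightarrow \pp S^+$ and realizes $C \hookrightarrow \pp\Lambda = \pp H^0(C, \omega_C)^{\vee}$ as the canonical embedding (this is the content of Mukai's Theorem 0.4). Since $\O_{\pp H^0(\omega_C)^\vee}(1)$ restricts to $\omega_C$ on a canonically embedded curve, and $\O_{\pp\S^+}(-1)$ restricts to $\O_{\pp \Lambda}(-1)$ on the linear subspace, we obtain fiberwise $(\iota\circ \phi)^*\O_{\pp\S^+}(-1)|_C = \omega_C^\vee$. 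This identification is compatible in families since the Mukai construction works in families (compare the identification $\alpha_0^*\E = (f_*\omega_f)^\vee$ already noted in the paper), giving $(\iota \circ \phi)^*\O_{\pp\S^+}(-1) = \omega_f^\vee$.

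Combining these two steps yields $\alpha_n^*\L_i = \sigma_i^*\omega_f^\vee$, and taking first Chern classes gives $c_1(\alpha_n^*\L_i) = -c_1(\sigma_i^*\omega_f) = -\psi_i$ by definition of the $\psi$-class. The only subtle point is verifying that the fiberwise identification $\O_{\pp\S^+}(-1)|_C = \omega_C^\vee$ globalizes correctly to $\omega_f^\vee$ on $\C$, but this follows from the universal (relative) version of Mukai's construction and the corresponding relative statement for canonical embeddings.
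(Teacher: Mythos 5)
Your proof is correct and takes essentially the same approach as the paper: both factor the map to $\pp\S^+$ through the relative canonical embedding $\C \to \pp(f_*\omega_f)^\vee$ and use that $\O_{\pp\S^+}(-1)$ restricts to $\O_{\pp(f_*\omega_f)^\vee}(-1)$, which pulls back under the canonical map to $\omega_f^\vee$. The paper's version is slightly more compact, working directly with the relative canonical map $|\omega_f|$ rather than globalizing a fiberwise identification, but the argument is the same.
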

\begin{proof}
Let $f\colon\C \to \M_{7,n}^{\circ}$ be the universal curve. The line bundle $\alpha^*\L_i$ is the pullback of $\O_{\pp \S^+}(-1)$ along the composition
\[\M_{7,n}^{\circ} \xrightarrow{\sigma_i} \C \rightarrow \OG(5, \V) \xrightarrow{\iota} \pp \S^+.\]
But the above composition also factors as
\[\M_{7,n}^{\circ} \xrightarrow{\sigma_i} \C \xrightarrow{|\omega_f|} \pp (f_* \omega_f)^\vee \to \pp \S^+,\]
and $\O_{\pp \S^+}(-1)$ restricts to $\O_{\pp (f_* \omega_f)^\vee}(-1)$. Hence, $\alpha_n^*\L_i$ is $\sigma_i^* \O_{\pp (f_* \omega_f)^\vee}(-1) = (\sigma_i^*\omega_f)^\vee$.
\end{proof}

\begin{lem} \label{En}
Let $\E_n$ be the tautological rank $7 - n$ subbundle on $\Gr(7 - n, \Q_n)$.
The pullback $\alpha_n^*\E_n$ sits in an exact sequence
\begin{equation} \label{lfe} 0 \rightarrow \bigoplus_{i=1}^n \alpha_n^*\L_i \rightarrow (f_*\omega_f)^\vee \rightarrow \alpha_n^*\E_n \rightarrow 0.
\end{equation}
In particular, the Chern classes of $\alpha_n^* \E_n$ are tautological.
\end{lem}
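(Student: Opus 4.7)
The plan is to construct the desired short exact sequence intrinsically on the Grassmann bundle $\Gr(7-n, \Q_n)$, then pull it back along $\alpha_n$ and identify each term using compatibility with the Mukai construction and earlier lemmas.

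First, on $Z_n$ we have the defining sequence
\[0 \to \bigoplus_{i=1}^n \L_i \to \pi_n^*\S^+ \to \Q_n \to 0.\]
Let $p\colon \Gr(7-n,\Q_n) \to Z_n$ be the Grassmann bundle and let $\widetilde{\E}_n \subset p^*\pi_n^*\S^+$ be the preimage of the tautological subbundle $\E_n \subset p^*\Q_n$. A short diagram chase shows that $\widetilde{\E}_n$ is a rank $7$ subbundle fitting into
\[0 \to \bigoplus_{i=1}^n \L_i \to \widetilde{\E}_n \to \E_n \to 0.\]
By the universal property of the Grassmannian, the inclusion $\widetilde{\E}_n \subset p^*\pi_n^*\S^+$ induces a morphism $\Gr(7-n, \Q_n) \to \Gr(7,\S^+)$ over $\BSO_{10}$, sending $(p_1, \ldots, p_n, \Lambda) \mapsto \Lambda$, along which $\widetilde{\E}_n$ is the pullback of the tautological rank $7$ subbundle $\E$.

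Next I would identify $\alpha_n^*\widetilde{\E}_n$ with $(f_*\omega_f)^\vee$. By the definition of $\alpha_n$ in Lemma \ref{7nckgp}, the composition
\[\M_{7,n}^\circ \xrightarrow{\alpha_n} \Gr(7-n,\Q_n) \to \Gr(7,\S^+)\]
agrees with the composition of the forgetful map $\M_{7,n}^\circ \to \M_7^\circ$ with $\alpha_0$ from \eqref{gbun}, since both send $(C, p_1, \ldots, p_n)$ to the span of $C \subset \pp\S^+$ under the canonical/spinor embedding. Because $\alpha_0^*\E = (f_*\omega_f)^\vee$ on $\M_7^\circ$ and the Hodge bundle is pulled back under forgetting markings, it follows that $\alpha_n^*\widetilde{\E}_n = (f_*\omega_f)^\vee$. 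Pulling the exact sequence above back along $\alpha_n$ yields the claimed exact sequence \eqref{lfe}.

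For the final assertion, the Chern classes of $(f_*\omega_f)^\vee$ are tautological by Mumford's formula (i.e.\ Grothendieck--Riemann--Roch applied to $f$), and $c_1(\alpha_n^*\L_i) = -\psi_i$ is tautological by Lemma \ref{Lpsi}. The multiplicativity of the total Chern class in the short exact sequence then expresses $c(\alpha_n^*\E_n)$ as a polynomial in tautological classes. There is no real obstacle: every step is either a diagram chase or a direct invocation of an earlier identification, and the only substantive input is the compatibility of Mukai's construction with forgetting markings, which is built into the definition of $\alpha_n$.
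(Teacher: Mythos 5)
Your proof is correct and follows essentially the same route as the paper: both build the rank-$7$ bundle (your $\widetilde{\E}_n$, the paper's $p^*\E$) from the short exact sequence on $\Gr(7-n,\Q_n)$, identify its pullback along $\alpha_n$ with the dual Hodge bundle via the factorization through $\M_7^\circ \to \Gr(7,\S^+)$, and conclude using Lemma~\ref{Lpsi} and Mumford's formula. The only cosmetic difference is that you introduce $\widetilde{\E}_n$ as a preimage and then recognize it as $p^*\E$, whereas the paper writes $p^*\E$ directly.
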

\begin{proof}
Recall that we defined $\Q_n$ as the cokernel of \eqref{phin}. 
The map onto the second factor in \eqref{lc}, $p\colon \Gr(7 - n, \Q_n) \to \Gr(7, \S^+)$, sends a $7 - n$ dimensional subspace to its preimage under the quotient map $\S^+ \to \Q_n$. Hence, there is an exact sequence on $\Gr(7 - n, \Q_n)$
\[0 \rightarrow \bigoplus_{i=1}^n \L_i \to p^* \E \to \E_n \rightarrow 0.\]
Then note that $\alpha_n^*p^*\E$ is the same as the pullback of $\E$ along $\M_{7,n}^\circ \to \M_{7}^\circ \to \Gr(7, \S^+)$, which is the dual of the Hodge bundle. The last claim follows by combining the exact sequence \eqref{lfe} with Lemma \ref{Lpsi}.
\end{proof}

\begin{lem}
The Chow ring of $\M_{7,n}^{\circ}$ is generated by tautological classes.
\end{lem}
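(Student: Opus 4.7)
The plan is to push generation backward through the open immersion $\alpha_n\colon \M_{7,n}^\circ \hookrightarrow \Gr(7-n, \Q_n)$ from Lemma~\ref{7nckgp}. Since $\alpha_n$ is an open immersion, the restriction map $\alpha_n^*\colon A^*(\Gr(7-n, \Q_n)) \to A^*(\M_{7,n}^\circ)$ is surjective, so it suffices to exhibit a set of generators for the Chow ring of the Grassmann bundle whose pullbacks along $\alpha_n$ all lie in the tautological subring $R^*(\M_{7,n}^\circ)$.

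First I would identify such generators in two steps. The projection $\Gr(7-n, \Q_n) \to Z_n$ is a Grassmann bundle, so by the relative Grassmann bundle presentation (as used in the proof of Lemma~\ref{OGbun}), $A^*(\Gr(7-n, \Q_n))$ is generated as an algebra over $A^*(Z_n)$ by the Chern classes of the tautological subbundle $\E_n$. In turn, $Z_n$ is open in $\OG(5, \V)^n$, so by excision combined with Corollary~\ref{ogn}, $A^*(Z_n)$ is generated by the Chern classes of the tautological rank~$5$ subbundles $\U_1, \ldots, \U_n$ pulled back from each factor. Concatenating these two facts, $A^*(\Gr(7-n, \Q_n))$ is generated as a ring by the classes $c_j(\U_i)$ and $c_j(\E_n)$.

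Finally I would invoke Lemmas~\ref{Ui} and~\ref{En}: the first shows that $\alpha_n^* c_j(\U_i)$ is tautological for all $i, j$, and the second (which itself uses Lemma~\ref{Lpsi} to identify $c_1(\alpha_n^* \L_i) = -\psi_i$) shows that $\alpha_n^* c_j(\E_n)$ is tautological. Since $\alpha_n^*$ is surjective and all of its images on a generating set lie in $R^*(\M_{7,n}^\circ)$, it follows that $A^*(\M_{7,n}^\circ) = R^*(\M_{7,n}^\circ)$. There is no real obstacle at this stage; the essential geometric work has already been carried out in constructing the pointed Mukai embedding and matching $\alpha_n^*\U_i$ and $\alpha_n^*\E_n$ with tautological data, and the remaining step is a purely formal assembly via the Grassmann bundle presentation.
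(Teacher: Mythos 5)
Your proof is correct and follows essentially the same route as the paper: restrict generators of $A^*(\Gr(7-n,\Q_n))$ — namely the Chern classes of $\U_i$ and of $\E_n$ — along the open immersion $\alpha_n$ and invoke Lemmas~\ref{Ui} and~\ref{En} to see they land in the tautological ring. The only difference is that you make the intermediate surjection $A^*(\OG(5,\V)^n)\twoheadrightarrow A^*(Z_n)$ via excision explicit, which the paper's shorter write-up leaves implicit.
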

\begin{proof}
By Lemma \ref{7nckgp}, we know that $\M_{7,n}^\circ$ is an open substack of $\Gr(7 - n, \Q_n)$. The Chow ring of the latter is generated by pullbacks of classes from $\OG(5, \V)^n$ and by the Chern classes of the tautological subbundle $\E_n$. By Lemmas \ref{Ui} and \ref{En} respectively, both of these collections of classes restrict to tautological classes on $\M_{7,n}^\circ$.
\end{proof}

\section{Applications to even cohomology}

Here we use the results from Sections \ref{toprestrictions}, \ref{agp}, and \ref{sec:7CKgP} to prove Theorem \ref{thm:taut}.

\subsection{The degree 4 cohomology of \texorpdfstring{$\Mb_{g,n}$}{Mbargn} is tautological}

\begin{proof}[Proof of Theorem \ref{thm:taut}(1)] 
By \cite{ArbarelloCornalba}, we know that $H^k(\Mb_{g,n})$ is tautological for $k \leq 3$.
Therefore, using Theorem \ref{thm:fgSTE}, we deduce that $H^4(\Mb_{g,n})$ is contained the STE generated by $H^4(\Mb_{g',n'})$ for $g' < 7$ and $n' \leq 4$.
By Proposition  \ref{openCKgPresults}, $W_4H^4(\M_{g',n'})$ is tautological for $g'$ and $n'$
 in this range, and it follows that $H^4(\Mb_{g,n})$ is tautological. 
\end{proof}

\begin{proof}[Proof of Theorem \ref{thm:taut}(2)]
    By Theorem \ref{thm:taut}(1) and \cite{ArbarelloCornalba}, all classes in $H^6(\Mb_{g,n})$ that are pushed forward from the boundary are tautological. Also, $H^6(\M_{g,n})$ is stable and hence tautological for $g \geq 10$ \cite{Wahl}. It follows that $H^6(\Mb_{g,n})$ is tautological for $g \geq 10$.
\end{proof}

\begin{rem}
    By Lemma \ref{blue}, to show that $H^6(\Mb_{g,n})$ is tautological for all $g$ and $n$, it would suffice to show this for $g\leq 9$ and $n\leq 6$. In principle, this can be checked computationally as follows. By Theorem \ref{thm:taut}\eqref{it:homology}, we know that $H_6(\Mb_{g,n})$ is tautological for all $g$ and $n$. Thus, if the intersection pairing
    \[
RH^6(\Mb_{g,n})\times RH_6(\Mb_{g,n})\rightarrow \qq
    \]
    is perfect for $g\leq 9$ and $n\leq 6$, then $H^6(\Mb_{g,n})$ is tautological. In principle, one could compute this pairing using the Sage package admcycles \cite{admcycles}. In practice, however, this computation is too memory intensive to carry out.
\end{rem}

\subsection{The low degree even homology of \texorpdfstring{$\Mb_{g,n}$}{Mbargn} is tautological}

Let $H_k^{BM}$ denote Borel--Moore homology with coefficients in $\qq$ or $\qq_{\ell}$, together with its mixed Hodge structure or Galois action.
From the long exact sequence in Borel--Moore homology, we have a right exact sequence
\begin{equation}\label{BMsequence}
    H_k(\tilde{\partial \M_{g,n}})\rightarrow H_k(\Mb_{g,n})\rightarrow W_{-k} H_k^{BM}(\M_{g,n})\rightarrow 0
\end{equation}
for all $k$.

\begin{proof}[Proof of Theorem \ref{thm:taut}(3)] We now show that $H_k(\Mb_{g,n})$ is tautological for even $k \geq 14$. 
For $g=0$, we have $H^*(\Mb_{0,n}) = RH^*(\Mb_{0,n})$\cite{Keel}. For $g \geq 1$, by \cite[Proposition~2.1]{BergstromFaberPayne} and the duality between $H_k^{BM}$ and $H^k_c$, we have:
    \begin{equation*}
    H_k^{BM}(\M_{g,n})=0 \text{ for } \begin{cases}
     k<2g \text{ and } n=0,1; \\
     k<2g-2+n \text{ and } n\geq 2.
    \end{cases}
\end{equation*}
Combining this with the exact sequence \eqref{BMsequence}, we reduce inductively to the finitely many cases $k\geq 2g$ and $n=0,1$ or $k\geq 2g-2+n$ and $n\geq 2$. When $g\geq 3$, in all of these cases with $k\leq 14$ we know that $\M_{g,n}$ has the CKgP and $A^*(\M_{g,n})=R^*(\M_{g,n})$ (see Table~\ref{ckgptable}). By Proposition \ref{openCKgPresults}, it follows that $W_{-k}H^{BM}_k(\M_{g,n})$ is tautological, so again by induction and the exact sequence \eqref{BMsequence} we reduce to the cases of $g=1,2$. When $g=1$, all even cohomology is tautological \cite{Petersengenus1}. When $g=2$, we apply Corollary \ref{g2lowdegreetaut}.
\end{proof}
\begin{cor}\label{homologymotives14}
    As Hodge structures or Galois representations, we have
    \[
    H^{2d_{g,n}-14}(\Mb_{g,n})^{\semis}\cong \bigoplus \mathsf{L}^{d_{g,n}-7}
    \]
\end{cor}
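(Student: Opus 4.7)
The plan is to deduce this corollary in essentially one step from Theorem~\ref{thm:taut}\eqref{it:homology}. All of the substantive work is already hidden in that theorem; what remains is a routine translation between the vocabulary of tautological rings and the vocabulary of motivic structures.

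First, I apply Theorem~\ref{thm:taut}\eqref{it:homology} with $k = 14$ to conclude that $RH^*(\Mb_{g,n})$ contains $H_{14}(\Mb_{g,n})$ for every $g$ and $n$. Unpacking the identification $H_{14}(\Mb_{g,n}) = H^{2d_{g,n}-14}(\Mb_{g,n})$ of $\qq$-vector spaces set up in the introduction, this is exactly the assertion that the $\qq$-vector space $H^{2d_{g,n}-14}(\Mb_{g,n})$ is spanned by tautological cycle classes.

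Next, I invoke the standard fact that every tautological class is the cycle class of an algebraic cycle on $\Mb_{g,n}$, and that on a smooth proper Deligne--Mumford stack the cycle class map $A^j \to H^{2j}$ factors through the Hodge--Tate component $H^{j,j}$ in the Hodge-theoretic setting and, after semisimplification, through a sum of Tate twists $\qq_\ell(-j)$ in the $\ell$-adic setting. Taking $j = d_{g,n} - 7$ so that $2j = 2d_{g,n}-14$, this identifies $H^{2d_{g,n}-14}(\Mb_{g,n})^{\semis}$ with a direct sum of copies of $\mathsf{L}^{d_{g,n}-7}$, with multiplicity $\dim_{\qq} H^{2d_{g,n}-14}(\Mb_{g,n})$. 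When $d_{g,n} < 7$ the cohomology group vanishes and the statement is vacuous.

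There is no genuine obstacle here, which is the point of packaging the work into Theorem~\ref{thm:taut}\eqref{it:homology} first. The only subtlety is that in the $\ell$-adic setting one must pass to the semisimplification before asserting the direct sum decomposition, which is precisely why the statement is phrased in terms of $(\cdot)^{\semis}$; in the Hodge-theoretic setting the semisimplification is redundant because the Hodge structure on the cohomology of a smooth proper Deligne--Mumford stack is already semisimple, as noted after Theorem~\ref{thm:131415}.
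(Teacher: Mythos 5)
Your proposal is correct and is essentially the paper's own argument: deduce from Theorem~\ref{thm:taut}\eqref{it:homology} that $H_{14}(\Mb_{g,n})$ is tautological, hence algebraic, and then translate via Poincar\'e duality (and the cycle class map) into the statement about Tate twists. The paper's proof is just terser.
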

\begin{proof}
  We proved $H_{14}(\Mb_{g,n})$ is tautological and hence algebraic. Applying Poincar\'e duality yields the corollary.
\end{proof}

\section{The thirteenth and fifteenth homology of \texorpdfstring{$\Mb_{g,n}$}{Mbargn}} \label{h13sec}
In this section, we prove Theorems \ref{thm:13} and \ref{thm:15}. As a corollary, we obtain restrictions on the Hodge structures and Galois representations appearing in $H_{13}(\Mb_{g,n})$ and $H_{15}(\Mb_{g,n})$. 
\subsection{Thirteenth homology}
As usual, we argue by induction on $g$ and $n$.  First, we must treat one extra base case.

\begin{lem}\label{H13Mb211}
   The group $H^{13}(\Mb_{2,11})$ is in the STE generated by $H^{11}(\Mb_{1,11})$.
\end{lem}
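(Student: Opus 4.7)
The plan is to work with the weight exact sequence
\[
H^{11}(\widetilde{\partial \M_{2,11}}) \to H^{13}(\Mb_{2,11}) \to W_{13}H^{13}(\M_{2,11}) \to 0
\]
and analyze each piece separately, using Lemma \ref{blue}(a) at $(g,n,k)=(2,11,13)$ in the spirit of Lemma \ref{odd}.

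First I will handle the boundary term. Every component of $\widetilde{\partial \M_{2,11}}$ is (up to finite automorphism) a product $\Mb_{g_1,n_1}\times \Mb_{g_2,n_2}$ with $(g_i,n_i)\prec (2,11)$, and by K\"unneth its $H^{11}$ is assembled from tensor products of cohomology of degrees $\leq 11$. All such cohomology lies in $S_\omega^*$ by \cite{CanningLarsonPayne}, and STE-closure under boundary pushforward places the image in $S_\omega^{13}(\Mb_{2,11})$.

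For the interior, Lemma \ref{blue}(a) identifies
\[
W_{13}H^{13}(\M_{2,11})\big/\bigl(\Phi^{13}_{2,11}+\Psi^{13}_{2,11}\bigr) \cong \bigoplus_{|\lambda|=11} W_{13}H^{2}(\M_2,\mathbb{V}_\lambda)\otimes V_{\lambda^T},
\]
which vanishes because $|\lambda|=11$ is odd and the hyperelliptic involution acts on $\mathbb{V}_\lambda$ by $(-1)^{|\lambda|}=-1$; hence $W_{13}H^{13}(\M_{2,11})=\Phi^{13}_{2,11}+\Psi^{13}_{2,11}$. The $\Psi$ summand is realized from $S_\omega^*$ by lifting: a typical representative has the form $\sum \psi_i\,\pi_{j(i)}^*\alpha_i$ with $\alpha_i\in W_{11}H^{11}(\M_{2,10})$, and lifting each $\alpha_i$ to $\tilde\alpha_i\in H^{11}(\Mb_{2,10})\subset S_\omega^{11}$ via \cite{CanningLarsonPayne} (together with the facts that $\psi_i$ is tautological and $S_\omega^*$ is a ring closed under forgetful pullback) produces the desired preimage in $S_\omega^{13}(\Mb_{2,11})$.

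The remaining $\Phi^{13}_{2,11}$ consists of forgetful pullbacks from $W_{13}H^{13}(\M_{2,10})$, so closing the argument requires $H^{13}(\Mb_{2,10})\subset S_\omega^*$. By Poincar\'e duality this equals $H_{13}(\Mb_{2,10})$ (since $\dim\Mb_{2,10}=13$), which is the case $(2,10)\prec (2,11)$ of Theorem \ref{thm:13} and is handled by the main induction of this section before we reach $(2,11)$. This step is what I expect to take the most care: for $(2,10)$ the parity trick from Lemma \ref{odd} is unavailable ($|\lambda|=10$ is even), so one has to invoke Petersen's classification of the cohomology of symplectic local systems on $\M_2$ (via the Torelli map $\M_2\to\A_2$, as in Proposition \ref{danA2}) to check that each relevant $W_{13}H^{3}(\M_2,\mathbb{V}_\lambda)$ vanishes; the rest is then easy because $\Phi^{13}_{2,10}=0$ automatically, as $\dim\Mb_{2,9}=12<13$ forces $W_{13}H^{13}(\M_{2,9})=0$.
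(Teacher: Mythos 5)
Your proposal is essentially the paper's argument with Lemma~\ref{odd} unrolled by hand: the parity vanishing of $W_{13}H^{13}(\M_{2,11})/(\Phi+\Psi)$ via Lemma~\ref{blue}(a), lifting $\Psi^{13}_{2,11}$ through $\psi$-classes and $H^{11}(\Mb_{2,10})$, and reducing $\Phi^{13}_{2,11}$ to $H^{13}(\Mb_{2,10})$. That much is correct, and the boundary term is handled the same way.

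Where you deviate---and go wrong---is in the treatment of the base case $H^{13}(\Mb_{2,10})\subset S^*_\omega$. First, appealing to ``the main induction of Theorem~\ref{thm:13}'' is structurally circular: the proof of Theorem~\ref{thm:13} invokes the present Lemma~\ref{H13Mb211} to deal with $(2,11)$, so the lemma needs a self-contained argument for $(2,10)$. Second, the argument you then sketch for $(2,10)$ has a gap: the claim that ``$\dim\Mb_{2,9}=12<13$ forces $W_{13}H^{13}(\M_{2,9})=0$'' is not a valid dimension count. The complex dimension bounds $H^k$ only for $k>2\cdot 12$, and the vcd of $\M_{2,9}$ is $4\cdot 2-4+9=13$, so $H^{13}(\M_{2,9})$ is exactly at the top of the allowed range and there is no a priori reason for its pure part to vanish. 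You also do not need Petersen's classification of local-system cohomology for this case.

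The paper's much shorter route is to observe that $\M_{2,10}$ has the CKgP (Table~\ref{ckgptable}), so by Proposition~\ref{openCKgPresults} the odd pure-weight group $W_{13}H^{13}(\M_{2,10})$ vanishes outright; hence $H^{13}(\Mb_{2,10})$ is entirely pushed forward from its boundary, whose $H^{11}$ is controlled by \cite{CanningLarsonPayne}. This at once puts $H^{13}(\Mb_{2,10})$ in $S^*_\omega$ and, by the same token, would fix your $W_{13}H^{13}(\M_{2,9})$ sub-claim (since $\M_{2,9}$ also has the CKgP). With that substitution your proof goes through; without it, the $(2,10)$ step is unproven.
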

\begin{proof}
We showed in \cite{CanningLarsonPayne} that $H^{11}(\Mb_{g,n})$ is in the STE generated by $H^{11}(\Mb_{1,11})$ for all $g$ and $n$.
Next, we note that $\M_{2,10}$ has the CKgP (see Table \ref{ckgptable}), so $H^{13}(\Mb_{2,10})$ consists of classes pushed forward from the boundary by Proposition \ref{openCKgPresults}. In particular, $H^{13}(\Mb_{2,10})$ also lies in this STE.
The claim then follows from Lemma \ref{odd}.
\end{proof}

\begin{proof}[Proof of Theorem \ref{thm:13}] 
The proof is similar to that of Theorem \ref{thm:taut}(3). 
When $g=0$, all odd cohomology vanishes \cite{Keel}, so we can assume $g\geq 1$. By \cite[Proposition 2.1]{BergstromFaberPayne} and the duality between $H_{13}^{BM}$ and $H^{13}_c$, we have that for $g\geq 1$
    \begin{equation*}
    H^{BM}_{13}(\M_{g,n})=0 \text{ for } \begin{cases}
     13<2g \text{ and } n=0,1 \\
     13<2g-2+n \text{ and } n\geq 2.
    \end{cases}
\end{equation*}
Combining this with the exact sequence \eqref{BMsequence}, we reduce inductively to the finitely many cases where $13\geq 2g$ and $n=0,1$ or $13\geq2g-2+n$ and $n\geq 2$. All such cases except for $(g,n)$ in $Z = \{ (2,11), (1,11),(1,12),(1,13)\}$ have the CKgP (see Table \ref{ckgptable}). By Proposition \ref{openCKgPresults}, it follows that $W_{-13}H_{13}^{BM}(\M_{g,n})=0$ for $(g,n) \not \in Z$. By induction and the exact sequence \eqref{BMsequence}, we reduce to the exceptional cases of $(g,n) \in Z$. The case $(g,n)=(2,11)$ follows from Lemma \ref{H13Mb211} and Hard Lefschetz. The cases where $g=1$ follow from Proposition~\ref{genus1weightk} and induction on $n$. 
\end{proof}
Following the proof strategy above, we obtain the following corollary.
\begin{cor}\label{homologymotives13}
    As Hodge structures or Galois representations, we have
    \[
    H^{2d_{g,n}-13}(\Mb_{g,n})^{\semis}\cong \bigoplus\mathsf{L}^{d_{g,n}-12}\mathsf{S}_{12}
    \]
    for all $g$ and $n$.
\end{cor}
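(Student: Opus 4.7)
The plan is to mirror the inductive argument from the proof of Theorem~\ref{thm:13}, now tracking motivic structure at each step, and then convert the resulting homological statement into the desired cohomological one via Poincar\'e duality. Explicitly, I would prove by induction on $(g,n)$ with respect to $\prec$ that $H_{13}(\Mb_{g,n})^{\semis} \cong \bigoplus \mathsf{L}^{-12}\mathsf{S}_{12}$. Because $\Mb_{g,n}$ is smooth and proper of complex dimension $d := d_{g,n}$, Poincar\'e duality gives $H^{2d-13}(\Mb_{g,n}) \cong H_{13}(\Mb_{g,n})\otimes \mathsf{L}^{d}$, and this converts the asserted homology statement into the stated one, since $\bigoplus \mathsf{L}^{-12}\mathsf{S}_{12} \otimes \mathsf{L}^{d} = \bigoplus \mathsf{L}^{d-12}\mathsf{S}_{12}$.

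The inductive step exploits the right-exact sequence~\eqref{BMsequence}. For $(g,n)$ such that $\M_{g,n}$ has the CKgP (Table~\ref{ckgptable}), Lemma~\ref{cycleclass} gives $W_{-13}H_{13}(\M_{g,n})=0$, so $H_{13}(\Mb_{g,n})$ is a quotient of the boundary contribution. Each codimension-one boundary stratum is, up to finite quotient, a product of $\Mb_{g_k,n_k}$'s with $(g_k,n_k)\prec (g,n)$, so K\"unneth decomposes its $H_{13}$ into summands $H_i(\Mb_{g_1,n_1})\otimes H_j(\Mb_{g_2,n_2})$ with $i+j=13$. Vanishing of $H^i(\Mb_{g',n'})$ in odd degrees $i\leq 9$ \cite{CanningLarsonPayne} transfers via Hard Lefschetz to vanishing of the corresponding Borel--Moore piece $H_i$, eliminating all but the pairs $(i,j)\in\{(11,2),(13,0)\}$ and their mirrors. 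These contribute $\mathsf{L}^{-11}\mathsf{S}_{12}\otimes \mathsf{L}^{-1}\cong \mathsf{L}^{-12}\mathsf{S}_{12}$ (using the known motivic form of $H_{11}$ from \cite{CanningLarsonPayne}) and $\mathsf{L}^{-12}\mathsf{S}_{12}\otimes \mathbb{Q}\cong \mathsf{L}^{-12}\mathsf{S}_{12}$ (by the inductive hypothesis), so the entire boundary contribution has the predicted shape.

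The four exceptional cases $(1,11),(1,12),(1,13),(2,11)$ (where $\M_{g,n}$ lacks the CKgP) are treated directly, as in the proof of Theorem~\ref{thm:13}. For $g=1$, Proposition~\ref{genus1weightk} identifies $W_{-13}H_{13}(\M_{1,n})$ with a Tate twist of $\mathsf{S}_{2n-12}\otimes V_{\lambda}$; this vanishes for $n\in\{11,13\}$ because $\mathsf{S}_{10}=\mathsf{S}_{14}=0$, and for $n=12$ it reduces to a sum of copies of $\mathsf{L}^{-12}\mathsf{S}_{12}$ after the Tate twist by $(12)$. For $(2,11)$, Lemma~\ref{H13Mb211} places $H^{13}(\Mb_{2,11})$ in $S_\omega^*$, Corollary~\ref{hodge13g2} (together with its Hodge-theoretic counterpart) pins its semisimplification to $\bigoplus \mathsf{L}\mathsf{S}_{12}$, and Hard Lefschetz (multiplication by $\mathsf{L}^{7}$) propagates this to $H^{27}(\Mb_{2,11})^{\semis} \cong \bigoplus \mathsf{L}^{8}\mathsf{S}_{12}$, equivalently $H_{13}(\Mb_{2,11})^{\semis} \cong \bigoplus \mathsf{L}^{-12}\mathsf{S}_{12}$.

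The main subtlety is the bookkeeping of Tate twists through both the K\"unneth decomposition and the Poincar\'e duality isomorphism, and making sure no higher tensor power $\mathsf{S}_{12}^{\otimes m}$ with $m \geq 3$ can sneak into the boundary sum. Conceptually, the inductive hypothesis keeps every odd-degree class on smaller $\Mb_{g',n'}$ inside the span of $\mathsf{L}^{a}\mathsf{S}_{12}$'s, so any product of two such classes lands in even total cohomological degree and therefore cannot contribute to $H_{13}$.
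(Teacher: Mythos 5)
Your argument mirrors the paper's proof exactly: it unpacks the induction of Theorem~\ref{thm:13} along the Borel--Moore right exact sequence, uses the CKgP-driven vanishing of $W_{-13}H_{13}(\M_{g,n})$ outside the exceptional set $Z=\{(1,11),(1,12),(1,13),(2,11)\}$, decomposes the boundary contribution via K\"unneth while noting that gluing pushforward is a Tate twist, and handles the base cases via Proposition~\ref{genus1weightk}, Lemma~\ref{H13Mb211}, and Hard Lefschetz, just as the paper does. The only thing to flag is a small arithmetic slip in the $(2,11)$ case: since $d_{2,11}=14$, $H_{13}(\Mb_{2,11})$ is Poincar\'e dual to $H^{15}$, not $H^{27}$, and Hard Lefschetz is by $\mathsf{L}$, not $\mathsf{L}^{7}$ (so $H^{15}(\Mb_{2,11})^{\semis}\cong\bigoplus\mathsf{L}^2\mathsf{S}_{12}$); the final identification $H_{13}(\Mb_{2,11})^{\semis}\cong\bigoplus\mathsf{L}^{-12}\mathsf{S}_{12}$ you reach is nonetheless correct.
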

\begin{proof}

From the proof above, we see that $W_{-13}H_{13}^{BM}(\M_{g,n})=0$ for $(g,n)\notin Z$, and so
    \[
    H_{13}(\tilde{\partial \M_{g,n}})\rightarrow H_{13}(\Mb_{g,n})
    \]
is surjective. Noting that pushforward along a gluing map just induces a Tate twist, by induction on $g$ and $n$, we reduce to the cases $(g, n) \in Z$. The case $(g,n)=(2,11)$ follows from Lemma \ref{H13Mb211} and Hard Lefschetz. The cases where $g=1$ follow from Proposition~\ref{genus1weightk} and induction on $n$. 
\end{proof}

\subsection{Fifteenth homology}
Again, we will argue by induction on $g$ and $n$. We first need to treat two additional base cases in genus $2$. The difficult one is with $12$ markings.
\begin{lem} \label{mainl}
$H^{15}(\Mb_{2,12})$ is generated by classes pushed forward from the boundary. Hence, $H^{15}(\Mb_{2,12})$ is contained in the STE generated by $H^{11}(\Mb_{1,11})$.
\end{lem}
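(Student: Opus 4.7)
By the right exact sequence
\[
H^{13}(\tilde{\partial \M_{2,12}}) \to H^{15}(\Mb_{2,12}) \to W_{15}H^{15}(\M_{2,12}) \to 0,
\]
the first assertion of the lemma is equivalent to the vanishing $W_{15}H^{15}(\M_{2,12}) = 0$. Granting this vanishing, the ``hence'' statement follows because each boundary stratum of $\Mb_{2,12}$ is a finite quotient of a product of $\Mb_{g',n'}$ with $(g',n') \prec (2,12)$, and the degree-$13$ cohomology of each such factor lies in the STE generated by $H^{11}(\Mb_{1,11})$ by Corollaries~\ref{1ns} and~\ref{hodge13g2}, Lemma~\ref{H13Mb211}, and Lemma~\ref{odd} combined with induction on $n$.

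The plan for the vanishing is to apply Lemma~\ref{blue}(a), which gives
\[
W_{15}H^{15}(\M_{2,12})/(\Phi^{15}_{2,12} + \Psi^{15}_{2,12}) \cong \bigoplus_{|\lambda|=12} W_{15}H^{3}(\M_2, \mathbb{V}_{\lambda}) \otimes V_{\lambda^T},
\]
and then to establish both that $\Phi^{15}_{2,12} + \Psi^{15}_{2,12} = 0$ inside $W_{15}H^{15}(\M_{2,12})$ and that the right-hand side vanishes. For the vanishing of $\Phi$ and $\Psi$, the plan is an inductive argument starting from $\M_{2,10}$: Table~\ref{ckgptable} and Proposition~\ref{openCKgPresults} give $W_{13}H^{13}(\M_{2,10}) = W_{15}H^{15}(\M_{2,10}) = 0$. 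Applying Lemma~\ref{blue}(a) at $n=11$, where the quotient vanishes because the hyperelliptic involution kills the cohomology of odd-weight local systems on $\M_2$, then yields $W_{13}H^{13}(\M_{2,11}) = W_{15}H^{15}(\M_{2,11}) = 0$. Pulling these vanishings back along forgetful maps gives $\Phi^{15}_{2,12} = 0$ and $\Phi^{13}_{2,12} = 0$, so also $\Psi^{15}_{2,12} = \sum_i \psi_i \Phi^{13}_{2,12} = 0$.

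The main obstacle is to show that $W_{15}H^{3}(\M_2, \mathbb{V}_{\lambda}) = 0$ for every partition $\lambda = (\lambda_1, \lambda_2)$ with $\lambda_1 + \lambda_2 = 12$. The plan is to invoke Petersen's classification of the cohomology of symplectic local systems on $\A_2$ \cite{Petersenlocalsystems} and transfer the conclusion to $\M_2$ using the Gysin sequence for the open inclusion $\M_2 \hookrightarrow \A_2$, whose complement is, up to finite quotient, a product of elliptic curve moduli spaces and therefore contributes only in bidegrees and weights controlled by the Eichler--Shimura decomposition. Every $\mathsf{S}_{12}$-isotypic contribution in Petersen's analysis---arising from Saito--Kurokawa, Yoshida/endoscopic, or Eisenstein pieces---forces $(a,b,p)$ into a short list that excludes $a+b=12$ in the bidegree needed for pure weight $15$ in degree $p=3$. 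The remaining case-by-case verification is finite but delicate, and constitutes the technical heart of the argument.
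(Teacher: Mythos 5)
Your reduction via Lemma~\ref{blue}(a) is a genuinely different route from the paper's. The paper's proof is a direct boundary computation: it cites Bergstr\"om--Faber's Getzler--Kapranov computation giving $\dim \Hom(\mathsf{L}^2\mathsf{S}_{12}, H^{15}(\Mb_{2,12})) = 836$, then exhibits $891$ explicit $\omega$-decorated boundary graphs, computes selected K\"unneth components of their pullbacks to boundary divisors (Figure~\ref{pullbackchart}), finds $66$ linear relations among them, and uses an $\ss_{12}$-representation-theoretic argument (the quotient $V_{2,1^{10}} \oplus V_{3,1^9}$ has subrepresentations of dimension only $11$ or $55$) together with the dimension bound to conclude exactly $55$ relations and hence $836$ independent boundary classes. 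Your plan instead reduces to showing $W_{15}H^{15}(\M_{2,12}) = 0$, and the reduction steps you carry out---killing $\Phi^{15}_{2,12}$ and $\Psi^{15}_{2,12}$ by climbing from $n=10$ (CKgP, Table~\ref{ckgptable}) through $n=11$ (odd $|\lambda|$, hyperelliptic involution)---are correct and clean.

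The gap is in the final step, which you acknowledge but do not resolve: you must prove $W_{15}H^3(\M_2, \mathbb{V}_\lambda) = 0$ for every $\lambda$ with $|\lambda| = 12$. This is not a formality. Proposition~\ref{danA2} already tells you the semisimplification of $W_{15}H^{15}(\M_{2,n})$ is a sum of copies of $\mathsf{L}^2\mathsf{S}_{12}$, so you are not ruling out exotic motives---you must rule out any nonzero multiplicity of $\mathsf{L}^2\mathsf{S}_{12}$. The Eisenstein part of the Faber--van der Geer/Petersen description of $e_c(\A_2, \mathbb{V}_{l,m})$ does contain terms of the form $s_{l+m+4}\,\mathsf{L}^{m+1}\mathsf{S}_{l-m+2}$, and for $(l,m)=(11,1)$ this is precisely $s_{16}\,\mathsf{L}^2\mathsf{S}_{12}$ with $s_{16}=1$. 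Whether this term contributes to $W_{15}H^3(\M_2, \mathbb{V}_{11,1})$---as opposed to cancelling against boundary or living in a different cohomological degree after passing from $\A_2$ to $\M_2$ via the Gysin sequence for $\Sym^2\A_1 \hookrightarrow \A_2$---is exactly the delicate bookkeeping you defer. Without resolving it, the proof is not complete; and it is not evidently simpler than the paper's decorated-graph computation, which has the advantage of an independently verifiable dimension count to anchor it. If you want to pursue your route, the work to do is precisely this case-by-case analysis for the seven partitions $\lambda$ of $12$ into at most two parts, tracking the interplay between the Gysin sequence, the BGG/Faltings weight structure, and Petersen's classification.
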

The proof of Lemma \ref{mainl} involves calculating intersection pairings via decorated graphs. In order to simplify pictures for and language surrounding decorated graphs, we use the following conventions:
\begin{enumerate}
    \item A white circle $\circ$ is a vertex of genus $1$ 
    \item A black circle $\bullet$ is a vertex of genus $0$
    \item All markings that are not drawn elsewhere are understood to be on the leftmost genus $1$ vertex
    \item The decoration $\omega_i$ on a genus $1$ vertex is shorthand for $f_i^*\omega$ where $f_i\colon  \Mb_{1,12} \to \Mb_{1,11}$ forgets the $i$th marking and $\omega \in H^{11}(\Mb_{1,11})$. Similarly, $\omega_{ij}$ is shorthand for $f_{ij}^*\omega$ where $f_{ij} \colon \Mb_{1,13} \to \Mb_{1,11}$ forgets the $i$th and $j$th markings.
    \item As we vary the decoration $\omega$ over $H^{11}(\Mb_{1,11}) \cong \mathsf{S}_{12}$, pushforward along a map that glues $e$ pairs of marked points determines a morphism $\mathsf{L}^e \mathsf{S}_{12} \to H^{11+2e}(\Mb_{g,n})$. We say that a collection of $\omega$-decorated graphs ``is a basis for $H^k(\Mb_{g,n})$" if the corresponding morphisms $\mathsf{L}^e \mathsf{S}_{12} \to H^{k}(\Mb_{g,n})$ form a basis for $\Hom(\mathsf{L}^e\mathsf{S}_{12}, H^{k}(\Mb_{g,n}))$ and we have $H^k(\Mb_{g,n}) \cong \Hom(\mathsf{L}^e\mathsf{S}_{12}, H^{k}(\Mb_{g,n})) \otimes \mathsf{L}^e \mathsf{S}_{12}$.
\end{enumerate}

Bergstr\"om and Faber's implementation \cite{BergstromData} of the Getzler--Kapranov formula in genus $2$  shows that, viewed as Galois representations,
\begin{equation} \label{thedim} 
H^{15}(\Mb_{2,12})^{\semis} = 836 \, \mathsf{L}^2\mathsf{S}_{12}
\end{equation}
In fact, we will produce $836$ independent maps $\mathsf{L}^2\mathsf{S}_{12} \to H^{15}(\Mb_{2,12})$ that arise through pushforward from the boundary. From this, we will conclude that $H^{15}(\Mb_{2,12})$ is semi-simple and decomposes as a sum of these $836$ copies of $\mathsf{L}^2\mathsf{S}_{12}$.

As discussed above, an $\omega$-decorated graph with $e$ edges represents a morphism $\mathsf{L}^e \mathsf{S}_{12} \to H^{11 + 2e}(\Mb_{g,n})$.
In Figure \ref{pullbackchart} we list $891$ such decorated graphs for $H^{15}(\Mb_{2,12})$ and record particular K\"unneth components of their pullbacks to boundary divisors. We then argue that there are $55$ relations among these graphs, which allows us to conclude that there are $836$ independent graphs among them.

Before proving Lemma \ref{mainl}, we require some explicit bases for $H^{13}(\Mb_{g,n})$ for small $g, n$. 
Recall that Getzler \cite{Getzler} showed that $H^{13}(\Mb_{1,n})^{\semis} = \bigoplus \lstw$
for all $n$. 
\begin{lem} \label{112}
The following $11$ $\omega$-decorated graphs form a basis for $H^{13}(\Mb_{1,12})$.
\[\includegraphics[width=.8in]{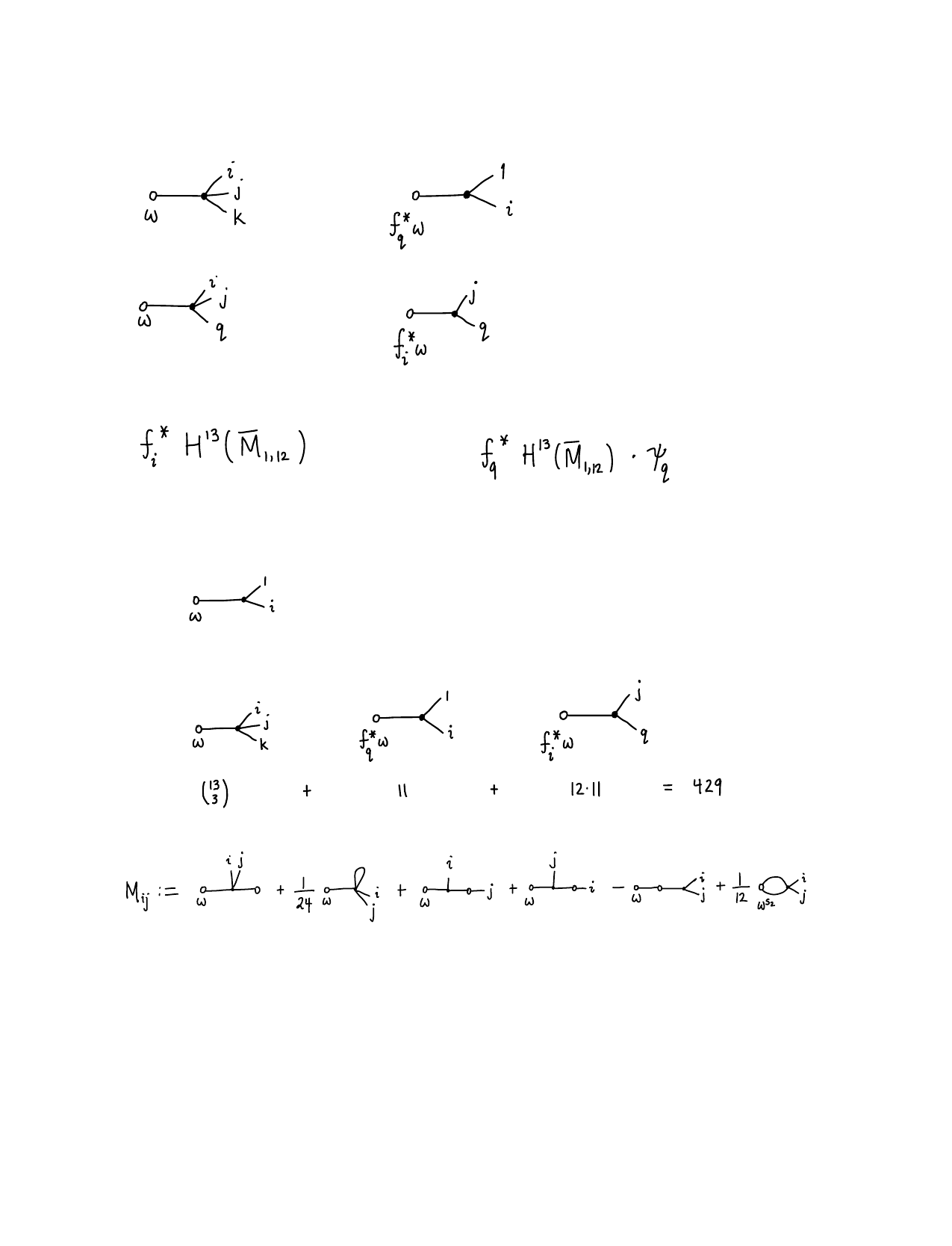}\]
\end{lem}
\begin{proof}
Consider the block diagonal pairing against the basis for $H^{11}(\Mb_{1,12})$ given by the set of genus $1$ vertices decorated by $\omega_2, \ldots, \omega_{12}$.
\end{proof}

\begin{lem} \label{h13m113}
The following $429$ $\omega$-decorated graphs form a basis for $H^{13}(\Mb_{1,13})$.
\[\includegraphics[width=3.5in]{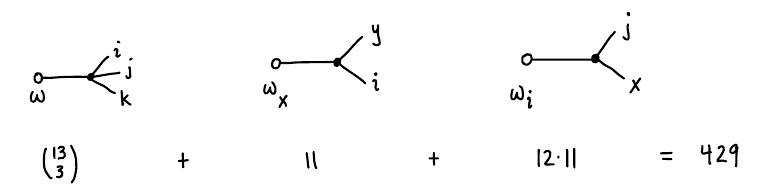}\]
Above, $x, y$ are any two markings. In the last group of classes, $i$ or $j$ can be equal to $y$. 
\end{lem}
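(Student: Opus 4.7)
The plan is to establish both a dimension identity $\dim \Hom(\mathsf{S}_{12}, H^{13}(\Mb_{1,13})) = 429$ and linear independence of the 429 listed decorated graphs. By Corollary \ref{1ns} and Proposition \ref{ts}, every class in $H^{13}(\Mb_{1,13})$ lies in the STE $S_\omega^*$ and is therefore a linear combination of $\omega$-decorated graphs, so the listed collection is \emph{a priori} a spanning set of a possibly smaller subspace; the task is to verify that no relations hold among them.

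For the dimension, I would iterate the right exact sequence
\[H^{11}(\widetilde{\partial \M}_{1,13}) \to H^{13}(\Mb_{1,13}) \to W_{13}H^{13}(\M_{1,13}) \to 0.\]
Proposition \ref{genus1weightk} identifies $W_{13}H^{13}(\M_{1,13}) \cong \mathsf{S}_{12}$, contributing one copy. For each boundary divisor $\Delta_{1,A}$, the normalization is $\Mb_{1,14-|A|} \times \Mb_{0,|A|+1}$; since $H^*(\Mb_{0,n})$ is pure Hodge–Tate, only the K\"unneth term $H^{11}(\Mb_{1,14-|A|}) \otimes H^0(\Mb_{0,|A|+1})$ contributes copies of $\mathsf{S}_{12}$. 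The dimensions $\dim \Hom(\mathsf{S}_{12}, H^{11}(\Mb_{1,m}))$ are computed recursively from the analogous exact sequence, using Proposition \ref{genus1weightk} at each step; summing over $A$ and subtracting the evident relations among the pullbacks yields $428$ additional copies, for a total of $429$.

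For independence, I would use a restriction strategy indexed by the stratification by number of edges. The single $\omega_{xy}$-decorated class on the one-vertex graph projects nontrivially to $W_{13}H^{13}(\M_{1,13}) \cong \mathsf{S}_{12}$ and so is independent of everything pushed forward from $\widetilde{\partial \M}_{1,13}$. For each boundary stratum, I would pull the listed classes back to the divisor and invoke Lemma \ref{112} together with the analogous bases for $H^{11}(\Mb_{1,m})$ for $m<12$ (obtained from Proposition \ref{genus1weightk}) to detect the contributions on the genus 1 factor. Ordering the graphs by the number of edges and proceeding by induction, the leading term of each restriction involves only basis elements on the factors that are not produced by any graph of smaller edge count, ruling out nontrivial relations.

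The main obstacle is the combinatorial bookkeeping for graphs supported on more than one boundary divisor. Here I would rely on the excess intersection formula and on the fact that $\psi$-classes on genus 1 vertices are represented by boundary classes (by \cite{Petersengenus1}, as already exploited in the proof of Proposition \ref{ts}), so that self-intersection contributions remain inside the $\omega$-decorated graph calculus. This is what makes Lemma \ref{112} the correct base case: restrictions to $\Mb_{1,12}$ factors of boundary divisors are always recognized in terms of its eleven-class basis, and once this is set up the inductive argument closes, matching the count with the dimension $429$.
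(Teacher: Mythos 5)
Your proposal has a critical error in the base step of the dimension and independence arguments. You write that Proposition~\ref{genus1weightk} identifies $W_{13}H^{13}(\M_{1,13}) \cong \mathsf{S}_{12}$, but that proposition gives $W_{13}H^{13}(\M_{1,13}) \cong \mathsf{S}_{14}\otimes V_{1^{13}}$, and $\mathsf{S}_{14}=0$ because there are no cusp forms of weight $14$ for $\SL_2(\zz)$ (this is exactly what Corollary~\ref{1ns} exploits). So $W_{13}H^{13}(\M_{1,13})=0$, and \emph{all} of $H^{13}(\Mb_{1,13})$ is pushed forward from the boundary. Your claim that ``the single $\omega_{xy}$-decorated class on the one-vertex graph projects nontrivially to $W_{13}H^{13}(\M_{1,13})$'' cannot be right: it contradicts the vanishing, and moreover under the paper's Convention~(5) a zero-edge $\omega$-decorated graph corresponds to a class in $H^{11}$, not $H^{13}$ (an $e$-edge graph lands in degree $11+2e$), so such a class is not even of the right degree. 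Your recursive count ``yields $428$ additional copies, for a total of $429$'' would therefore be off by the spurious copy you introduced from the interior.

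Beyond that, your overall strategy is heavier than the paper's. The paper simply cites Getzler for $\dim \Hom(\lstw, H^{13}(\Mb_{1,13})) = 429$ and then proves \emph{spanning} rather than independence: since all of $H^{13}(\Mb_{1,13})$ is pushed forward from the boundary, any such pushforward either already has the listed form or, modulo classes of the first kind, is pulled back from $H^{13}(\Mb_{1,12})$, where the relations coming from the basis in Lemma~\ref{112} can be pulled back to normalize the position of a fixed marking. This is a short reduction, whereas your proposed independence argument (stratifying by number of edges and pairing against restrictions to all boundary divisors) is left at the level of a sketch, with the combinatorial bookkeeping and the ``analogous bases for $H^{11}(\Mb_{1,m})$, $m<12$'' not actually produced. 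If you correct the base case to $W_{13}H^{13}(\M_{1,13})=0$ and cite Getzler for $429$, the spanning route is both correct and much shorter.
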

\begin{proof}
Getzler \cite{Getzler} proved that $H^{13}(\Mb_{1,13})^{\semis} = 429 \, \lstw$, so it suffices to show that the images of these maps span.
We know that $H^{13}(\Mb_{1,13})$ is pushed forward from the boundary by Proposition \ref{genus1weightk}.
Modulo classes of the first kind, the second two kinds are pulled back from $H^{13}(\Mb_{1,12})$. Pulling back the relations from $H^{13}(\Mb_{1,12})$, we can ensure that a fixed marking such as $y$ in the first case or $x$ in the next case is on the genus $0$ vertex.
\end{proof}

\begin{lem} \label{14pts}
Let $\{x, y, 1, \ldots, 12\}$ be a set of $14$ markings.
The following $6006$ $\omega$-decorated graphs form a basis for $H^{13}(\Mb_{1,14})$.
\[\includegraphics[width=5.5in]{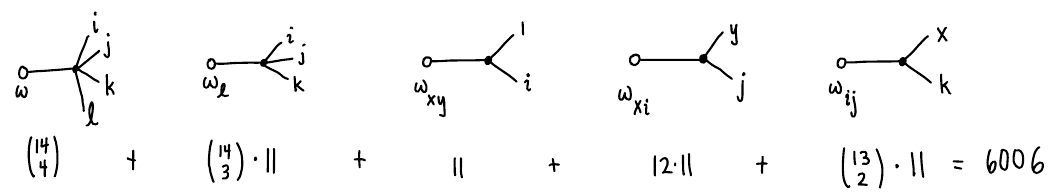}\]
In the last grouping above, $i, j,$ or $k$ can equal $y$.
\end{lem}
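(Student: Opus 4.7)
The plan is to adapt the strategy used in the proof of Lemma~\ref{h13m113}. By Getzler's calculation of the $\mathbb{S}_{14}$-equivariant Frobenius characteristic of $H^{13}(\Mb_{1,14})$, one has $\dim \Hom(\lstw, H^{13}(\Mb_{1,14})) = 6006$, so it suffices to show that the $6006$ listed $\omega$-decorated graphs span $H^{13}(\Mb_{1,14})$; linear independence then follows from the matching dimension.

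The first step is to reduce to boundary pushforwards. By Proposition~\ref{genus1weightk}, we have $W_{13} H^{13}(\M_{1,14}) \cong \mathsf{S}_{14} \otimes V_{2, 1^{12}} = 0$ since $\mathsf{S}_{14} = 0$, so every class in $H^{13}(\Mb_{1,14})$ is pushed forward from the normalization of the boundary. The irreducible self-gluing component, normalized by $\Mb_{0,16}$, has no degree-$11$ cohomology, so the $\lstw$-isotypic piece is supported on the reducible components $\Mb_{0, A \cup \{p\}} \times \Mb_{1, B \cup \{q\}}$, with the $\omega$-class on the genus-$1$ factor. This forces $|B| + 1 \geq 11$, which combined with stability $|A| \geq 2$ leaves $|A| \in \{2, 3, 4\}$. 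Enumerating the $\omega$-decorations via Proposition~\ref{genus1weightk} produces a spanning set of
\[
\binom{14}{4}\cdot 1 \;+\; \binom{14}{3}\cdot 11 \;+\; \binom{14}{2}\cdot 66 \;=\; 1001 + 4004 + 6006 \;=\; 11011
\]
$\omega$-decorated graphs.

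The core of the argument is to cut this spanning set down to the $6006$ graphs of the statement. Any graph with $|A|\in\{3,4\}$ has at least three markings on its genus-$0$ vertex, hence is a pullback along some forgetful map $\pi_j\colon \Mb_{1,14} \to \Mb_{1,13}$ of a graph of the analogous form on $\Mb_{1,13}$. Pulling back the relations implicit in Lemma~\ref{h13m113} (and, by one further iteration, the relations from Lemma~\ref{112}) expresses every graph with $|A| \geq 3$ as a linear combination of graphs with $|A|=2$ in which two distinguished markings $x$ and $y$ are forced onto the genus-$0$ vertex, with the noted caveat in the final grouping that one of the $\omega$-decoration indices may be allowed to equal $y$. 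The resulting reduced spanning set is precisely the $6006$ graphs in the statement.

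The main obstacle lies in the bookkeeping: one must verify that the image, under all fourteen forgetful maps, of the $715$ relations on $\Mb_{1,13}$ together with their iterated counterparts pulled back from $\Mb_{1,12}$ spans a subspace of codimension exactly $6006$ among the $11011$ generators. No further relations can hide, since this codimension would already contradict Getzler's dimension count; thus the $6006$ graphs form a basis.
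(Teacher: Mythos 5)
Your overall strategy matches the paper's: use Getzler's dimension count $\dim\Hom(\lstw,H^{13}(\Mb_{1,14}))=6006$, reduce to boundary pushforwards via $\mathsf{S}_{14}=0$, enumerate the naive spanning set of $\omega$-decorated graphs, and cut it down using pullbacks of the relations from lower-marked cases. Your explicit enumeration of the $\binom{14}{4}\cdot 1 + \binom{14}{3}\cdot 11 + \binom{14}{2}\cdot 66 = 11011$ generators (according to $|A|\in\{2,3,4\}$) is a useful elaboration, and the paper's actual proof takes essentially the same route, reducing the graphs with larger genus-$0$ vertices using the relations inherited from $\Mb_{1,12}$ (``Modulo classes of the first and second kinds above, the last three kinds are pulled back from an $\Mb_{1,12}$\dots'').

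However, the core reduction step is stated incorrectly in a way that breaks the arithmetic. You claim the relations reduce everything to ``graphs with $|A|=2$ in which two distinguished markings $x$ and $y$ are forced onto the genus-$0$ vertex.'' If $A=\{x,y\}$ is forced, there are only $1\cdot 66 = 66$ such graphs (one choice of $A$ times the $66$ $\omega$-decorations), which cannot span a $6006$-dimensional space and does not match your own claim that ``the resulting reduced spanning set is precisely the $6006$ graphs in the statement.'' The factor $\binom{14}{2}=91$ in $6006=91\cdot 66$ shows the genus-$0$ vertex must range over all pairs, not be pinned to $\{x,y\}$; the roles of $x$ and $y$ in the lemma's figure are to break symmetry among the $\omega$-decorations $\omega_{ij}$, not to fix $A$. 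Your concluding paragraph also conflates two different things: showing that the pullback relations reach codimension $6006$ among the $11011$ generators is not the same as showing the particular $6006$ graphs in the statement form a complement to those relations. One must actually carry out the rewriting — as the paper does, by observing that modulo the first two kinds the remaining graphs come from $\Mb_{1,12}$, and then applying Lemma~\ref{112} — rather than appeal to a ``no further relations can hide'' argument, which is circular as phrased.
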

\begin{proof}
Getzler \cite{Getzler} proved that $H^{13}(\Mb_{1,14})^{\semis}=6006\,\lstw$, so it suffices to show that the images of these maps span. Modulo classes of the first and second kinds above, the last three kinds are pulled back from $\Mb_{1,12}$. Thus, using the relations on $H^{13}(\Mb_{1,12})$ we can ensure that a fixed marking among those $12$ is on the genus $0$ vertex.
\end{proof}

\begin{lem} \label{12pts}
The following  $264$ $\omega$-decorated graphs form a basis for $H^{13}(\Mb_{2,12})$.
\[\includegraphics[width=3.5in]{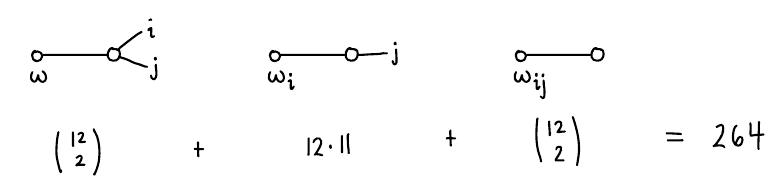}\]
\end{lem}
\begin{proof}
Bergstr\"om and Faber's implementation \cite{BergstromData} of the Getzler--Kapranov formula in genus $2$  shows that, as Galois representations,
    $H^{13}(\Mb_{2,12})^\semis = 264 \, \lstw$.
Thus, it suffices to show that the above $264$ maps are independent. We verify this by computing their pullbacks to $H^{13}(\Mb_{1,14})$ and using Lemma \ref{14pts} to see that those maps are independent. Indeed, the pullback is given by the graph that replaces the rightmost genus $1$ vertex with a genus $0$ vertex and adds two markings labeled $x, y$ to that vertex.
\end{proof}

\begin{rem}
Theorem \ref{thm:13} says that $H_{13}(\Mb_{g,n})$ is contained in the STE generated by $H^{11}(\Mb_{1,11})$. Thus, it has a graphical presentation, in which the generators are graphs of the sort appearing in Lemmas \ref{112}--\ref{12pts}. In forthcoming work, we will show that $H^{13}(\Mb_{g,n})$ is also contained in this STE. There, we take a more systematic approach and provide a complete list of relations among the corresponding decorated graph generators.
\end{rem}

\begin{proof}[Proof of Lemma \ref{mainl}]
Figure \ref{pullbackchart} represents a matrix with $891$ rows.
We first show that the matrix has rank $825$ and describe the $66$ relations among the rows.

First notice that the rows in A1 are the only rows with nonzero entries in the first column. Next, rows in A2, A3, and A4 have entries in column 2 that are independent from the other entries that appear in that column for later rows (by Lemma \ref{h13m113}).
This shows that the rows in A1 through A4 are independent and independent from the remaining five rows. Meanwhile, the bottom four rows are block upper triangular. This shows that the rank of the matrix is at least $825$.

\begin{figure} 
\centering
\includegraphics[width=6.5in]{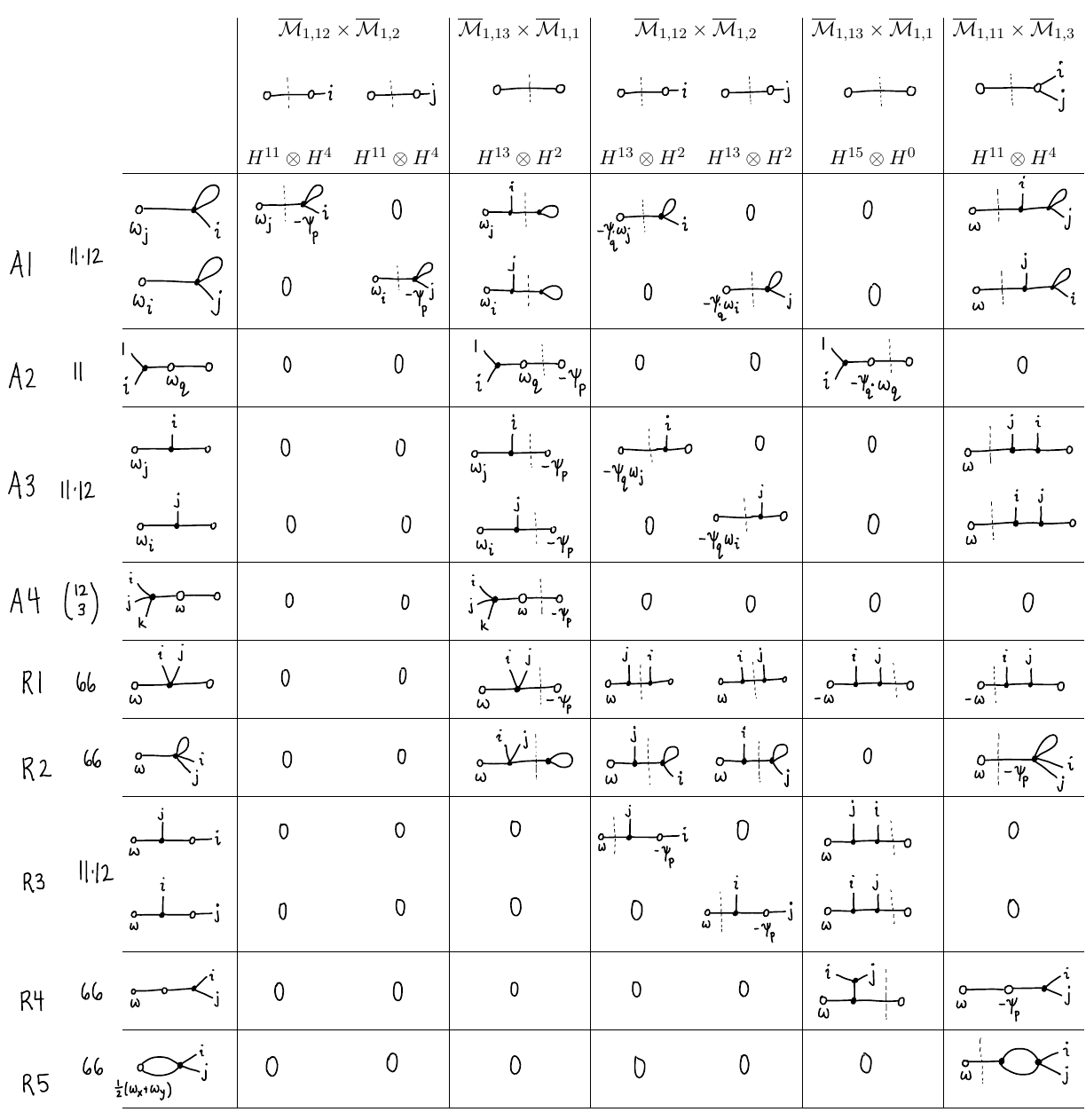}
\caption{Selected K\"unneth components of pullbacks of the $891$ classes to boundary divisors. In the chart, each row represents several rows of a block matrix. The number of rows in each group is listed to its left. The two half-edges glued to make a boundary divisor (pictured across the top) are labeled $q$ on the left vertex and $p$ on the right vertex.}
\label{pullbackchart}
\end{figure}

Meanwhile, it is not hard to check that the following $66$ relations exist among the rows:
\[R_1(ij) + \frac{1}{24}R_2(ij)  + R_3[i,j] + R_3[j,i] - R_4(ij) + \frac{1}{12} R_5(ij) = 0.\]
To verify these relations one needs to make use of known relations among tautological classes in $H^4(\Mb_{1,3})$ and $H^2(\Mb_{1,2})$ and $H^2(\Mb_{1,1})$.

Now, let
\[\includegraphics[width=5in]{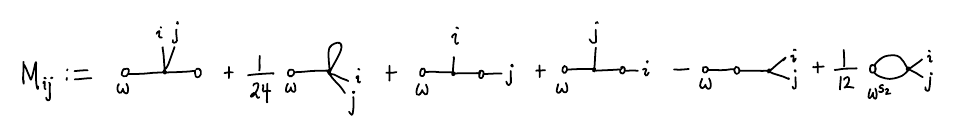}\]
We claim $M_{ij} \neq 0$. To see this, consider the
intersection of $M_{ij}$ with $R_5(ik)$ for $k \neq j$. The intersection of $R_5(ik)$ with the first $5$ terms in the above equation vanish since there is no $H^{11}$ on a genus $1$ vertex with less than $11$ markings. Meanwhile, the intersection $R_5(ij) \cdot R_5(ik)$ is nonzero using the usual rules for intersecting decorated graphs as in \cite{GraberPandharipande}. To see this, one should take the $\omega$ decoration on the $\Mb_{1,11}$ vertex in $R_5(ij)$ to be dual to the decoration used on the $\Mb_{1,11}$ vertex in $R_5(ik)$. 

Finally, the $\mathbb{S}_{12}$ action on the symbols $M_{ij}$ is $\mathrm{Ind}_{\ss_{10}}^{\ss_{12}}(V_{1^{10}}) = V_{2,1^{10}} \oplus V_{3,1^9}$, which is a sum of irreducible representations of dimension $11$ and $55$. It follows that there are either $11$ or $55$ relations among the $M_{ij}$. There cannot be only $11$ relations among the $M_{ij}$ because then there would be more than $836$ independent copies of $\mathsf{L}^2\mathsf{S}_{12}$ in $H^{15}(\Mb_{2,12})$, violating the dimension count \eqref{thedim}. Hence, there must be exactly $55$ relations among the $M_{ij}$. This shows that the $891$ copies of $\mathsf{L}^2\mathsf{S}_{12}$ listed on the left of the table generate an $836$-dimensional space, so they span all of $H^{15}(\Mb_{2,12})$.
\end{proof}

\begin{rem}
One could instead prove that the $891$ classes span a space of rank $836$ by computing the pairing of this space against itself. By the argument above, the pairing must have rank $836$. 
\end{rem}

Finally, applying Lemma \ref{odd}(a) together with Lemmas \ref{mainl} and \ref{12pts}, and Corollary \ref{1ns}, we obtain one more base case.

\begin{lem}\label{13markedpoints}
$H^{15}(\Mb_{2,13})$ lies in the STE generated by $H^{11}(\Mb_{1,11})$.
\end{lem}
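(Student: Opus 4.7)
The plan is to apply Lemma~\ref{odd} with $g=2$, $k=15$, and $n=13$ (which is odd). That lemma tells us that the STE generated by
\[H^{15}(\Mb_{2,12}), \quad H^{13}(\Mb_{2,12}), \quad \text{and} \quad H^{13}(\Mb_{1,m}) \text{ for } m \leq 15\]
already contains $H^{15}(\Mb_{2,13})$. So it suffices to show that each of these three families of cohomology groups is itself contained in the STE $S_{\omega}^*$ generated by $H^{11}(\Mb_{1,11})$, and then invoke the fact that an STE generated by a subset of another STE is contained in the latter.

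For the three ingredients: First, $H^{15}(\Mb_{2,12}) \subset S_{\omega}^*$ is exactly the content of Lemma~\ref{mainl}. Second, Lemma~\ref{12pts} exhibits an explicit basis of $H^{13}(\Mb_{2,12})$ by $\omega$-decorated graphs whose decorations are classes pulled back from $H^{11}(\Mb_{1,11})$, so $H^{13}(\Mb_{2,12}) \subset S_{\omega}^*$. Third, Corollary~\ref{1ns} states that $H^{13}(\Mb_{1,m}) \subset S_{\omega}^*$ for all $m$, in particular for $m \leq 15$.

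Since $S_{\omega}^*$ is an STE, and each of the generating pieces needed in Lemma~\ref{odd} lies in $S_{\omega}^*$, we conclude $H^{15}(\Mb_{2,13}) \subset S_{\omega}^*(\Mb_{2,13})$. There is no genuine obstacle here: all the real work was carried out in the base-case lemmas cited above (especially the combinatorial rank computation in Lemma~\ref{mainl}). The proof is essentially a bookkeeping step that assembles those previous results via the odd-marking reduction from Lemma~\ref{odd}.
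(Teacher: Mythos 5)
Your proposal is correct and matches the paper's own proof exactly: both apply Lemma~\ref{odd} with $k=15$, $n=13$ and then verify that the three needed ingredients ($H^{15}(\Mb_{2,12})$ via Lemma~\ref{mainl}, $H^{13}(\Mb_{2,12})$ via Lemma~\ref{12pts}, and $H^{13}(\Mb_{1,m})$ via Corollary~\ref{1ns}) all lie in $S_\omega^*$. This is the same bookkeeping argument the authors use.
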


\begin{proof}[Proof of Theorem \ref{thm:15}]
The proof is similar to that of Theorem \ref{thm:13}. When $g=0$, all odd cohomology vanishes \cite{Keel}, so we can assume $g\geq 1$. By \cite[Proposition 2.1]{BergstromFaberPayne} and the duality between $H_{15}^{BM}$ and $H^{15}_c$, we have that for $g\geq 1$
    \begin{equation*}
    H^{BM}_{15}(\M_{g,n})=0 \text{ for } \begin{cases}
     15<2g \text{ and } n=0,1 \\
     15<2g-2+n \text{ and } n\geq 2.
    \end{cases}
\end{equation*}
Combining this with the exact sequence \eqref{BMsequence}, we reduce inductively to the finitely many cases $15\geq 2g$ and $n=0,1$ or $15\geq 2g-2+n$ and $n\geq 2$. In such cases when $g\geq 3$, $\M_{g,n}$ has the CKgP, and so by induction and the exact sequence \eqref{BMsequence}, we reduce to the cases where $g=1$ and $11\leq n \leq 14$, or $g=2$ and $11\leq n\leq 13$. When $g=2$ and $n=11$, the result follows from Hard Lefschetz and Lemma \ref{H13Mb211}. When $g=2$ and $12\leq n\leq 13$, the result follows from Lemmas \ref{mainl} and \ref{13markedpoints}. For $g = 1$, the claim follows from Proposition \ref{genus1weightk}.
\end{proof}

\begin{cor}\label{homologymotives15}
    As Hodge structures or Galois representations, we have
    \[
    H^{2d_{g,n}-15}(\Mb_{g,n})^{\semis}\cong \bigoplus\mathsf{L}^{d_{g,n}-13}\mathsf{S}_{12}\oplus \bigoplus \mathsf{L}^{d_{g,n}-15}\mathsf{S}_{16}
    \]
    for all $g$ and $n$.
\end{cor}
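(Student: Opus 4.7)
The plan is to mirror the proof of Corollary~\ref{homologymotives13}, modified to accommodate both $\mathsf{S}_{12}$ and $\mathsf{S}_{16}$ contributions. First I would invoke Poincar\'e duality to identify $H_{15}(\Mb_{g,n})$ with $H^{2d_{g,n}-15}(\Mb_{g,n})$; since $\Mb_{g,n}$ is smooth and proper, the latter is pure of weight $2d_{g,n}-15$. Because $2d_{g,n}-15$ is odd, purity forces any copy of $\mathsf{S}_{12}$ in the semisimplification to appear as $\mathsf{L}^{d_{g,n}-13}\mathsf{S}_{12}$, and any copy of $\mathsf{S}_{16}$ to appear as $\mathsf{L}^{d_{g,n}-15}\mathsf{S}_{16}$. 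It therefore suffices to show that no other simple motivic structures contribute.

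Next I would induct on $(g,n)$ via the right-exact Borel--Moore sequence \eqref{BMsequence}. Exactly as in the proof of Theorem~\ref{thm:15}, the dimension bounds of \cite[Proposition~2.1]{BergstromFaberPayne}, the CKgP results collected in Table~\ref{ckgptable}, and Proposition~\ref{openCKgPresults} reduce the problem to the base cases $(g,n)$ with $g=1, 11\leq n\leq 14$ or $g=2, 11\leq n\leq 13$. For all other relevant $(g,n)$, $W_{-15}H_{15}(\M_{g,n})$ vanishes, so $H_{15}(\Mb_{g,n})$ is boundary-supported. A gluing pushforward is a Tate twist by $\mathsf{L}$, so I would analyze boundary contributions via the K\"unneth decomposition: a summand $H_{q_1}(\Mb_{g_1,n_1})\otimes\cdots\otimes H_{q_r}(\Mb_{g_r,n_r})$ with $\sum q_i=15$ can have at most one index with $q_i\geq 11$, since $\mathsf{S}_{12}$ first appears in degree $11$ and $\mathsf{S}_{16}$ first appears in degree $15$. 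The other factors contribute only Tate classes by Theorem~\ref{thm:taut}(3) and Corollary~\ref{homologymotives14}, while the one potentially non-Tate factor contributes twists of $\mathsf{S}_{12}$ by Corollary~\ref{homologymotives13}, or of $\mathsf{S}_{12}$ and $\mathsf{S}_{16}$ by the inductive hypothesis.

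Then I would dispatch the base cases. For $g=1$ and $n\leq 14$, Proposition~\ref{genus1weightk} yields $W_{15}H^{15}(\M_{1,n})=\mathsf{S}_{16}\otimes V_{n-14,1^{14}}=0$, so all of $H_{15}(\Mb_{1,n})$ comes from the boundary and the induction closes. For $(g,n)=(2,11)$, $d_{g,n}=14$, so $H^{2d_{g,n}-15}=H^{13}$, and Corollary~\ref{hodge13g2} directly identifies $H^{13}(\Mb_{2,11})^{\semis}$ as the required sum of twists of $\mathsf{S}_{12}$. For $(g,n)\in\{(2,12),(2,13)\}$, Lemmas~\ref{mainl} and \ref{13markedpoints} place $H^{15}(\Mb_{2,n})$ inside the STE generated by $H^{11}(\Mb_{1,11})$, whose semisimplification consists solely of Tate twists of $\mathsf{S}_{12}$; when $n=13$ the target is $H^{17}$, which is recovered from $H^{15}$ by Hard Lefschetz with the correct induced Tate twist.

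The main obstacle I expect is the K\"unneth bookkeeping on the normalized boundary: one must rule out products of two non-Tate simple motives and verify that $\mathsf{S}_{16}$, once introduced at the genus--one base case $H^{15}(\Mb_{1,15})$, propagates through boundary pushforwards only with the predicted twist $\mathsf{L}^{d_{g,n}-15}$. Both points reduce to the numerical constraint $\sum q_i=15$ together with purity of $H^{2d_{g,n}-15}(\Mb_{g,n})$, which between them pin down all Tate exponents.
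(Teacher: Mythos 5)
Your proposal is correct and follows the same route as the paper: the paper's proof of Corollary~\ref{homologymotives15} is literally ``similar to that of Corollary~\ref{homologymotives13}, using that $H^{15}(\Mb_{1,15})\cong\mathsf{S}_{16}$,'' and your expansion (induction via the Borel--Moore sequence, CKgP vanishing reducing to the small-$(g,n)$ base cases, Tate twists from gluing pushforwards, genus~1 handled by Proposition~\ref{genus1weightk}, genus~2 by Corollary~\ref{hodge13g2} and Lemmas~\ref{mainl}, \ref{13markedpoints}) is exactly what the authors leave implicit. One small slip: you should list $(g,n)=(1,15)$ explicitly among the base cases --- it is the case where $W_{15}H^{15}(\M_{1,n})=\mathsf{S}_{16}$ is nonzero and the $\mathsf{S}_{16}$ summand is actually produced (for $n\leq 14$ the correct reason for vanishing is simply $n<k$ in Proposition~\ref{genus1weightk}, not the partition formula you wrote, which is only meaningful for $n\geq 15$).
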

\begin{proof}
The proof is similar to that of Corollary \ref{homologymotives13}, using the additional input that $H^{15}(\Mb_{1,15})^
\semis \cong 
\mathsf{S}_{16} \oplus 186263\, \mathsf{L}^2\mathsf{S}_{12}$ by \cite[p. 491]{Getzler}.
\end{proof}

We expect that Theorem \ref{thm:15} can be improved as follows.
\begin{conj} \label{conj:15}
The STE generated by $H^{11}(\Mb_{1,11})$ contains $H^{15}(\Mb_{g,n})$ for $g \geq 2$. 
\end{conj}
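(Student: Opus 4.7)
The plan is to apply Lemma~\ref{fc} with $S^* = S_\omega^*$ and $k = 15$, inducting on $(g,n)$ with $g \geq 2$ under the partial order $\prec$. The pullback input $\pi_i^* H^{15}(\Mb_{g,n-1})$ stays within $g \geq 2$ and is handled by induction. The boundary input to $H^{15}(\Mb_{g,n})$ is the pushforward of $H^{13}$ of smaller moduli spaces; in the K\"unneth decomposition of $H^{13}$ of a product, each factor has cohomological degree at most $13$, so by Corollary~\ref{1ns}, Proposition~\ref{genus1weightk}, and Theorem~\ref{thm:131415} no $\mathsf{S}_{16}$-type motive can enter via the boundary. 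In particular, the forbidden $\mathsf{S}_{16}$ class in $H^{15}(\Mb_{1,15})$ never propagates along tautological morphisms when the target has $g \geq 2$, which is what makes the conjecture plausible. The boundary analysis also needs $H^{13}(\Mb_{g,n}) \subset S_\omega^*$ for the finitely many smaller factors that appear; this is the subject of forthcoming work, but the cases needed here could also be verified by hand.

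The remaining task for each $(g,n)$ with $g \geq 2$ is the surjectivity of
\[S_\omega^{15}(\Mb_{g,n}) \longrightarrow W_{15}H^{15}(\M_{g,n})/(\Phi^{15}_{g,n} + RH^{15}(\M_{g,n})).\]
Three easy families cover most cases: the target vanishes when $n \geq 15$ by Lemma~\ref{blue}(b); it vanishes when $(g,n)$ lies in the CKgP range of Table~\ref{ckgptable} by Lemma~\ref{cycleclass} (since $15$ is odd); and Theorem~\ref{thm:fgSTE} forces any remaining problematic $(g,n)$ to satisfy $2 \leq g \leq 23$ and $n \leq 14$, leaving only a short finite list of exceptional pairs.

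For genus $2$, the pairs $(2,11)$, $(2,12)$, $(2,13)$ follow from Lemmas~\ref{H13Mb211}, \ref{mainl}, and~\ref{13markedpoints}, with Hard Lefschetz used where needed to pass between $H^{13}$ and $H^{15}$ (multiplication by an ample tautological class preserves $S_\omega^*$). For $(2,14)$, Lemma~\ref{blue}(a) reduces the question to $W_{15}H^{1}(\M_2, \mathbb{V}_\lambda)$ with $|\lambda| = 14$, which can be described from Petersen's local-system computations \cite{Petersenlocalsystems}. For each higher-genus exceptional pair, Lemma~\ref{blue}(a) analogously reduces to $W_{15}H^{15-n}(\M_g, \mathbb{V}_\lambda)$; by Theorem~\ref{thm:131415} only $\mathsf{L}^i \mathsf{S}_{12}$-type motives appear, and the needed generators should be realizable as $\omega$-decorated graphs pushed forward from the boundary.

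The hard part will be the explicit decorated-graph analysis for cases such as $(2,14)$ and the higher-genus exceptional pairs, in the spirit of the computation in Lemma~\ref{mainl}. One must construct enough $\omega$-decorated graphs whose pushforwards to $\Mb_{g,n}$ account for the dimension of $\Hom(\mathsf{L}^i\mathsf{S}_{12}, H^{15}(\Mb_{g,n}))$ predicted by Theorem~\ref{thm:131415}, and verify their independence either by intersection pairings with dual graphs or by pullback to boundary strata where smaller-$n$ bases are already known. The $\mathbb{S}_n$-equivariant structure and the representation-theoretic constraints on the relations, as exploited in Lemma~\ref{mainl}, should provide enough rigidity to pin down the spanning set exactly.
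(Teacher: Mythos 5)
This statement is labeled a \emph{Conjecture} in the paper, and the paper does not prove it: immediately after the statement, the authors note only that Theorem~\ref{thm:131415} shows no $\mathsf{S}_{16}$ appears in $H^{15}(\Mb_{g,n})$ for $g \geq 2$, and that the conjecture is therefore \emph{equivalent} to the assertion that the STE generated by $H^{11}(\Mb_{1,11})$ and $H^{15}(\Mb_{1,15})$ contains $H^{15}(\Mb_{g,n})$ for all $g,n$. They do not establish either form. So you are attempting to prove something the authors deliberately left open, and the proposal should be judged against that higher bar.

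Your outline has the right shape --- Lemma~\ref{fc} with $S^* = S_\omega^*$ and $k=15$, reducing to surjectivity onto $W_{15}H^{15}(\M_{g,n})/(\Phi^{15}_{g,n} + RH^{15}(\M_{g,n}))$ and using CKgP, vcd, and Lemma~\ref{blue}(b) to shrink the list of exceptional $(g,n)$ --- but the argument stops being a proof exactly where the difficulty lies. You acknowledge this yourself: ``the hard part will be the explicit decorated-graph analysis for cases such as $(2,14)$ and the higher-genus exceptional pairs,'' and you write that the generators ``should be realizable'' and the representation theory ``should provide enough rigidity.'' These are hopes, not verifications. By comparison, the one analogous boundary case the paper does verify in full, $H^{15}(\Mb_{2,12})$ in Lemma~\ref{mainl}, required constructing $891$ decorated graph classes, computing a $891$-row matrix of K\"unneth components of pullbacks, finding $66$ explicit relations, and resolving an ambiguity ($11$ versus $55$ extra relations) using the $\ss_{12}$-representation structure and an external dimension count from the Getzler--Kapranov formula. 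Carrying this out for $(2,14)$ and for all higher-genus exceptional pairs is precisely the content that is missing, and there is no guarantee it terminates favorably: the needed classes might genuinely fail to lie in $S_\omega^*$.

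Two smaller points also need care. First, Lemma~\ref{blue}(a) computes the quotient by $\Phi^k_{g,n} + \Psi^k_{g,n}$, whereas Lemma~\ref{fc} asks about the quotient by $\Phi^k_{g,n} + RH^k(\M_{g,n})$; the argument bridging these (as in the proof of Lemma~\ref{odd}) must be supplied, since $\Psi$ is not contained in $RH^*$. Second, your appeal to ``forthcoming work'' for $H^{13}(\Mb_{g,n}) \subset S_\omega^*$ imports an unproven fact from outside the paper; Theorem~\ref{thm:13} as stated is about $H_{13}$, not $H^{13}$. Even if the inductive framework of Lemma~\ref{fc} ultimately makes this unnecessary as a standalone input, that bookkeeping must be spelled out rather than waved at.
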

The proof of Theorem \ref{thm:131415} in the next section shows that for $g\geq 2$, $H^{15}(\Mb_{g,n})^{\semis}$ contains no copies of $\mathsf{S}_{16}$. Conjecture \ref{conj:15} is therefore equivalent to the assertion that the STE generated by $H^{11}(\Mb_{1,11})$ and $H^{15}(\Mb_{1,15})$ contains $H^{15}(\Mb_{g,n})$ for all $g$ and $n$.

\section{Proof of Theorem \ref{thm:131415}}
The first two statements of Theorem \ref{thm:131415} follow immediately from Corollaries \ref{homologymotives14} and \ref{homologymotives13} by Poincar\'e duality. From Corollary \ref{homologymotives15}, we can similarly conclude that $H^{15}(\Mb_{g,n})^{\semis}$ is a direct sum of copies of $\mathsf{L}^2 \mathsf{S}_{12}$ and $\mathsf{S}_{16}$. To finish, it suffices to show that when $g\geq 2$, no copies of $\mathsf{S}_{16}$ appear. 

\begin{lem} \label{8.1}
The STE generated by $H^{11}(\Mb_{1,11})$ contains $H_{15}(\Mb_{2,n})$ for all $n$. Hence, $H^{15}(\Mb_{2,n})^{\mathrm{ss}} \cong \bigoplus \mathsf{L}^2\mathsf{S}_{12}$, as Galois representations or Hodge structures.
\end{lem}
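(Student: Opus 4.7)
The plan is to combine Theorem~\ref{thm:15} with a motivic obstruction argument. Theorem~\ref{thm:15} already places $H_{15}(\Mb_{2,n})$ inside the STE generated by $\omega \in H^{11}(\Mb_{1,11})$ and $\omega' \in H^{15}(\Mb_{1,15})$, so it will suffice to show that the $\omega'$-part of this STE makes no contribution in the $g=2$ case. The second claim of the lemma then follows formally from the first via weight considerations, since $S^*_\omega$ carries only $\mathsf{S}_{12}$-type simple factors and $\mathsf{L}^2 \mathsf{S}_{12}$ is the unique Tate twist of $\mathsf{S}_{12}$ of weight $15$.

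The key intermediate step is to verify that $H^{15}(\Mb_{2,n})^{\semis}$ contains no copy of $\mathsf{S}_{16}$. For this, I would consider the right-exact sequence
\[
H^{13}(\widetilde{\partial \M_{2,n}}) \longrightarrow H^{15}(\Mb_{2,n}) \longrightarrow W_{15} H^{15}(\M_{2,n}) \longrightarrow 0.
\]
By Proposition~\ref{danA2}, the right-hand term semi-simplifies to $\bigoplus \mathsf{L}^2 \mathsf{S}_{12}$, which contains no $\mathsf{S}_{16}$. The left-hand term is pure of weight $13$ because $\widetilde{\partial \M_{2,n}}$ is smooth and proper, whereas $\mathsf{S}_{16}$ has weight $15$; hence $\mathsf{S}_{16}$ cannot appear there either, nor can its gluing pushforward to $H^{15}(\Mb_{2,n})$, which only introduces a Tate twist. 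By Hard Lefschetz the same vanishing transfers to $H_{15}(\Mb_{2,n})^{\semis} \cong H^{2d_{2,n}-15}(\Mb_{2,n})^{\semis}$.

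To conclude, I would analyze which simple Hodge or Galois subfactors can appear in $S^{2n-9}_{\omega,\omega'}(\Mb_{2,n}) / S^{2n-9}_\omega(\Mb_{2,n})$, where $S^*_{\omega,\omega'}$ denotes the STE of Theorem~\ref{thm:15}. Any class in this quotient arises from tautological operations involving $\omega'$, possibly combined with $\omega$-images, and therefore contributes simple factors only of the form $\mathsf{L}^a \mathsf{S}_{16}$ or $\mathsf{L}^a(\mathsf{S}_{12} \otimes \mathsf{S}_{16})$. The second type cannot appear in cohomology of weight $2n-9$ since $2n-9-26$ is odd, and the first is ruled out by the previous paragraph. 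Hence the quotient vanishes in this degree, so $H_{15}(\Mb_{2,n}) \subseteq S^*_\omega(\Mb_{2,n})$.

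The main delicate point is the motivic bookkeeping for the quotient $S^*_{\omega,\omega'}/S^*_\omega$: verifying rigorously that every STE-generator truly involving $\omega'$ contributes only the motivic types claimed above requires tracking how pullback and gluing pushforward interact with the semisimple decomposition, and then appealing to the fact that neither $\mathsf{S}_{16}$ nor $\mathsf{S}_{12} \otimes \mathsf{S}_{16}$ admits $\mathsf{L}^a\mathsf{S}_{12}$ as a summand. Once this is in place, everything else reduces to the weight parity check above.
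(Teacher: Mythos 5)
Your proof follows the same overall route as the paper: use Theorem~\ref{thm:15} to place $H_{15}(\Mb_{2,n})$ inside $S^*_{\omega,\omega'}$, show that no $\mathsf{S}_{16}$-type constituent appears in $H^{15}(\Mb_{2,n})^{\semis}$, and then deduce membership in $S^*_\omega$ by matching simple factors. The final bookkeeping paragraph is essentially the paper's compressed phrase ``by considering the Galois representations involved,'' and the parity argument ruling out $\mathsf{L}^a(\mathsf{S}_{12}\otimes\mathsf{S}_{16})$ is correct. The paper gets the key intermediate input by simply citing Corollary~\ref{hodge13g2}, which already records $H^{15}(\Mb_{2,n})^{\semis}\cong\bigoplus\mathsf{L}^2\mathsf{S}_{12}$ as Galois representations.

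There is, however, a genuine gap in the step where you re-derive that input. You argue that $H^{13}(\widetilde{\partial\M}_{2,n})$ is pure of weight $13$, that $\mathsf{S}_{16}$ has weight $15$ and ``hence $\mathsf{S}_{16}$ cannot appear there, nor can its gluing pushforward.'' This does not close the loop: the gluing pushforward $H^{13}(\widetilde{\partial\M}_{2,n}) \to H^{15}(\Mb_{2,n})$ is a Gysin map that Tate-twists by $\mathsf{L}$, so $\mathsf{S}_{16}$ would appear in the image exactly when $\mathsf{L}^{-1}\mathsf{S}_{16}$ appears in $H^{13}(\widetilde{\partial\M}_{2,n})$. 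Since $\mathsf{L}^{-1}\mathsf{S}_{16}$ also has weight $13$, the weight-purity observation alone does not rule it out. To repair this you must either (i) invoke effectivity (respectively Hodge--Tate weight) constraints: $\mathsf{L}^{-1}\mathsf{S}_{16}$ has Hodge numbers in bidegrees $(-1,14)$ and $(14,-1)$, which cannot occur as a constituent of $H^{13}$ of a smooth proper space; or (ii) cite the degree-$13$ structure of boundary cohomology directly, as the paper does via Corollary~\ref{hodge13g2} (whose proof for degree $15$ explicitly uses the degree $11$ and $13$ results on the boundary and Proposition~\ref{danA2}). Either fix is easy, but as written the weight argument is insufficient.

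Once the intermediate step is repaired, the remaining ``delicate point'' you flag — tracking that $S^*_{\omega,\omega'}/S^*_\omega$ contributes only $\mathsf{S}_{16}$-involving simple factors — is indeed the part the paper also passes over lightly; making it airtight requires the graphical description of $S^*_{\omega,\omega'}$ analogous to Proposition~\ref{ts}. Your acknowledgment of this is appropriate, and the parity check there is correct.
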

\begin{proof}
When $g=2$, as Galois representations, there are no copies of $\mathsf{S}_{16}$ in $H^{15}(\Mb_{2,n})^{\semis}$ by Corollary \ref{hodge13g2}.
This shows that $H_{15}(\Mb_{2,n})^{\mathrm{ss}}$ is a direct sum of copies of $\mathsf{L}^{d_{2,n} - 13} \mathsf{S}_{12}$ in the category of Galois representations. But we know that $H_{15}(\Mb_{g,n})$ lies in the STE generated by $H^{11}(\Mb_{1,11})$ and $H^{15}(\Mb_{1,15})$ by Theorem \ref{thm:15}. By considering the Galois representations involved, it follows that $H_{15}(\Mb_{2,n})$ lies in the STE generated by $H^{11}(\Mb_{1,11})$. In particular, $H_{15}(\Mb_{2,n})^{\mathrm{ss}}$ is a direct sum of copies of $\mathsf{L}^{d_{2,n} - 13} \mathsf{S}_{12}$, either as Galois representations or Hodge structures. The second statement follows by Poincar\'e duality.
\end{proof}

To show there are no copies of $\mathsf{S}_{16}$ when $g\geq 3$, we will use a similar strategy to that of \cite{CanningLarsonPayne}. We
study the first two maps in the weight $15$ complex:
\begin{equation}\label{beta}
H^{15}(\Mb_{g,n})\xrightarrow{\alpha} \bigoplus_{|E(\Gamma)| = 1} H^{15}(\Mb_{\Gamma})^{\Aut(\Gamma)} \rightarrow
\bigoplus_{|E(\Gamma)| = 2} (H^{15}\big(\Mb_{\Gamma}) \otimes \det E(\Gamma)\big)^{\Aut(\Gamma)}.
\end{equation}
Here, $\Gamma$ is a stable graph of genus $g$ with $n$ legs. The first map is the pullback to the normalization of the boundary $\tilde{\partial \M_{g,n}}$. The second map is explicitly described in \cite[Section~4.1]{CanningLarsonPayne}. 

\begin{lem}\label{firstmap15}
If $g\geq 3$, then the pullback map $\alpha$ in \eqref{beta} is injective.
\end{lem}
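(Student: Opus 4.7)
The plan is to dualize $\alpha$ by Poincar\'e duality on the smooth proper Deligne--Mumford stack $\Mb_{g,n}$ and then verify a vanishing statement on the open moduli space. Set $d := d_{g,n}$. Under Poincar\'e duality applied to $\Mb_{g,n}$ and to each smooth proper boundary divisor $\Mb_\Gamma$ (of dimension $d-1$), the transpose of
\[
\alpha \colon H^{15}(\Mb_{g,n}) \longrightarrow \bigoplus_{|E(\Gamma)| = 1} H^{15}(\Mb_\Gamma)^{\Aut(\Gamma)}
\]
is identified, up to a Tate twist, with the Gysin pushforward
\[
\iota_* \colon \bigoplus_{|E(\Gamma)| = 1} H^{2d-17}(\Mb_\Gamma)^{\Aut(\Gamma)} \longrightarrow H^{2d-15}(\Mb_{g,n}).
\]
Hence $\alpha$ is injective if and only if $\iota_*$ is surjective. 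Setting $k := 2d - 15 = 6g + 2n - 21$, which is odd, the right exact sequence
\[
H^{k-2}(\tilde{\partial \M_{g,n}}) \xrightarrow{\iota_*} H^{k}(\Mb_{g,n}) \longrightarrow W_k H^k(\M_{g,n}) \longrightarrow 0
\]
shows that this surjectivity is equivalent to $W_k H^k(\M_{g,n}) = 0$, which is what remains to prove for all $g \geq 3$ and $n \geq 0$.

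Two inputs suffice for this vanishing. First, whenever $\M_{g,n}$ has the CKgP, Lemma~\ref{cycleclass} immediately gives $W_k H^k(\M_{g,n}) = 0$ because $k$ is odd. Second, whenever $k$ strictly exceeds $\mathrm{vcd}(\M_{g,n})$, Harer's theorem forces $H^k(\M_{g,n}) = 0$; for $n \geq 1$ this condition simplifies to $2g + n > 17$. For $g \in \{3,4,5,6,7\}$, the CKgP ranges in Table~\ref{ckgptable} --- extended at $g = 7$ by Theorem~\ref{thm:genus7} --- are complementary to the vcd bound and jointly cover every $n \geq 0$. For $g \geq 9$, the vcd bound alone settles every $n \geq 0$.

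The main obstacle will be the sporadic cases $g = 8$, $n \in \{0,1\}$, where $k$ equals rather than exceeds the vcd. Here the plan is to extend the gonality-filtration argument of Section~\ref{sec:7CKgP} (now passing through covers of degree $6$ rather than $5$) to establish the CKgP of $\M_8$ and $\M_{8,1}$ directly, after which Lemma~\ref{cycleclass} applies once more to kill $W_k H^k(\M_{g,n})$. Combining all cases completes the reduction and yields the lemma.
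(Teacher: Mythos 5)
Your reformulation is the Poincar\'e dual of the paper's argument: the paper observes that $\alpha$ is injective precisely when $\mathrm{gr}^W_{15}H^{15}_c(\M_{g,n})=0$, and you translate that, via duality on $\Mb_{g,n}$, into the equivalent statement $W_kH^k(\M_{g,n})=0$ with $k=2d_{g,n}-15$. Both reductions then split into a ``high degree'' regime and a ``CKgP'' regime, and your verification that the CKgP range from Table~\ref{ckgptable} (extended to $g=7$ by Theorem~\ref{thm:genus7}) covers $g\in\{3,\dots,7\}$ is correct. So up to duality, you have re-derived the reduction.

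The gap is the one you flagged yourself: $g=8$ with $n\in\{0,1\}$, where $k=\mathrm{vcd}(\M_{g,n})$ exactly and Harer's theorem alone does not give $H^k(\M_{g,n})=0$. Your proposed fix --- establishing the CKgP for $\M_8$ and $\M_{8,1}$ by extending the gonality filtration through hexagonal covers --- is not what the paper does, is not established anywhere in the paper, and would be a substantial new result, far in excess of what the lemma needs. The correct and much cheaper fix is to use the sharper vanishing already available: \cite[Proposition~2.1]{BergstromFaberPayne}, which is quoted in Section~6.2 as $H_k(\M_{g,n})=0$ (Borel--Moore) for $k<2g$ when $n\leq 1$. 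For $n\leq 1$ this is strictly stronger than the naive vcd bound (it reflects the theorem that $H^{\mathrm{vcd}}(\M_g;\qq)=0$ and $H^{\mathrm{vcd}}(\M_{g,1};\qq)=0$ for $g\geq 2$), and it exactly kills the two remaining cases: for $g=8$, $n=0$ one gets $H^{27}(\M_8)=0$, and for $g=8$, $n=1$ one gets $H^{29}(\M_{8,1})=0$, which are precisely $k=2d_{g,n}-15$. Replace your appeal to Harer with the citation to that proposition and the argument closes with no further input.
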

\begin{proof}
The map $\alpha$ is injective when $\mathrm{gr}^W_{15}H^{15}_c(\M_{g,n})=0$. By \cite[Proposition 2.1]{BergstromFaberPayne}, this holds whenever $2g-2+n>15$. When $2g-2+n\leq 15$, $\M_{g,n}$ has the CKgP (see Table \ref{ckgptable}). The result follows from Proposition \ref{openCKgPresults}.
\end{proof}

We now assume $g\geq 3$, and we study the second map in \eqref{beta} as a morphism of Hodge structures. In particular, we consider the $H^{15,0}$ part:

\begin{equation}\label{hodgebeta}
\bigoplus_{|E(\Gamma)| = 1} H^{15,0}(\Mb_{\Gamma})^{\Aut(\Gamma)} \xrightarrow{ \ \beta \ } \bigoplus_{|E(\Gamma)| = 2} \big(H^{15,0}(\Mb_{\Gamma}) \otimes \det E(\Gamma)\big)^{\Aut(\Gamma)}.
\end{equation}

By Lemma \ref{firstmap15}, to prove the theorem in the category of Hodge structures, it suffices to show that $\beta$ is injective. The domain is a direct sum over graphs with one edge. If $\Gamma$ has one vertex and a loop, then $H^{15,0}(\Mb_{\Gamma})=H^{15,0}(\Mb_{g-1,n+2})=0$ by induction on $g$. If $\Gamma$ has two vertices with an edge connecting them, then by the K\"unneth formula, we have
\[
H^{15,0}(\Mb_{\Gamma})=H^{15,0}(\Mb_{a,A\cup p})\oplus H^{15,0}(\Mb_{b,A^c\cup q}).
\]
Assume $a\geq b$. By induction, the above is nonvanishing only if $b=1$ and $|A^c|\geq 14$. Because $g\geq 3$, it follows that $H^{15,0}(\Mb_{\Gamma})=H^{15,0}(\Mb_{1,A^c\cup q})$ in this case. Let $\Gamma'$ be the graph obtained from $\Gamma$ by adding a loop on the genus $g-1$ vertex and decreasing the genus accordingly, as in Figure \ref{Gamma2p}. Then $H^{15,0}(\Mb_{\Gamma'})$ has a summand $H^{15,0}(\Mb_{1,A^c\cup q})$, and the map $H^{15,0}(\Mb_{\Gamma})\rightarrow H^{15,0}(\Mb_{\Gamma'})$ injects into that summand. This completes the proof of Theorem \ref{thm:131415} in the category of Hodge structures. 
\begin{figure}[h!]
    \centering
    \includegraphics[width=3.2in]{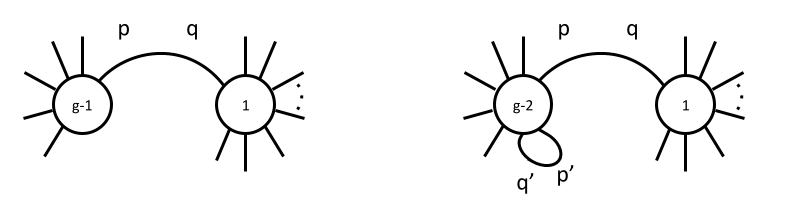}
    \caption{The graph $\Gamma$ on the left and $\Gamma'$ on the right. 
    }
    \label{Gamma2p}
\end{figure}

Finally, we explain how to deduce the result in the category of Galois representations from the result in the category of Hodge structures. To do so, we need one more lemma.

\begin{lem} \label{ds}
Let $V \subset H^{15}(\Mb_{1,15})$ be the subspace of classes pushed forward from the boundary and let $W \subset H^{15}(\Mb_{1,15})$ be the image of $H^{15}_c(\M_{1,15}) \to H^{15}(\Mb_{1,15})$. Then there is a direct sum decomposition $H^{15}(\Mb_{1,15}) = V \oplus W$,
as Hodge structures or as Galois representations.
Moreover, we have $W \cong \mathsf{S}_{16}$ and $V^{\semis} \cong 186263\, \mathsf{L}^2\mathsf{S}_{12}$.
\end{lem}
\begin{proof}
There is an exact sequence
\[0 \rightarrow V \rightarrow H^{15}(\Mb_{1,15}) \rightarrow W_{15}H^{15}(\M_{1,15}) \rightarrow 0.\]
It thus suffices to show that the composition
\begin{equation} \label{comp} W  \to H^{15}(\Mb_{1,15}) \to W_{15} H^{15}(\M_{1,15}) \cong \mathsf{S}_{16}
\end{equation}
is an isomorphism.
The pairing $H^{15}(\Mb_{1,15}) \times H^{15}(\Mb_{1,15}) \to \qq$ equips
$H^{15}(\Mb_{1,15})$ with a nondegenerate bilinear form under which the inclusion of $W$ is dual to the quotient $H^{15}(\Mb_{1,15}) \to W_{15} H^{15}(\M_{1,15})$.
To show \eqref{comp} is an isomorphism is therefore equivalent to showing that the restriction of the pairing to $W$ with itself is full rank. This becomes clear in the category of Hodge structures: Since $V$ has type $H^{13,2} \oplus H^{2,13}$ and $W$ has type $H^{15,0} \oplus H^{0,15}$, the pairing between $V$ and $W$ is trivial, so $W$ must be full rank on itself. This claim holds at the level of $\qq$-vector spaces. Since all the maps above are also maps of Galois representations, we have a direct sum decomposition as Galois representations too. 

The identification of the remainder
$V^{\semis} = 186263\, \mathsf{L}^2\mathsf{S}_{12}$
follows from \cite[p. 491]{Getzler}.
\end{proof}

By Poincar\'e duality, to prove Theorem \ref{thm:131415}, it suffices to show that when $g \geq 2$, there are no copies of $\mathsf{L}^{d_{g,n}-15} \mathsf{S}_{16}$ in the right-hand side of Corollary \ref{homologymotives15}. The proof of Theorem \ref{thm:15} exhibits $H^{2d_{g,n} - 15}(\Mb_{g,n})$ as a quotient of a direct sum of the form
\[\bigoplus \mathsf{L}^{d_{g,n}-13} \otimes H^{11}(\Mb_{1,11})\oplus \bigoplus \mathsf{L}^{d_{g,n}-15} \otimes H^{15}(\Mb_{1,15}) \twoheadrightarrow H^{2d_{g,n} - 15}(\Mb_{g,n}),  \]
where the map from each summand on the left is a composition of a forgetful pullback with a gluing pushforward. By Lemma \ref{ds}, it suffices to show that, for each of these maps, the composition with the inclusion of $W \subset H^{15}(\Mb_{1,15})$,
\begin{equation} \label{gm} \mathsf{L}^{d_{g,n}-15} \otimes W \rightarrow 
\mathsf{L}^{d_{g,n}-15} \otimes H^{15}(\Mb_{1,15}) \to
H^{2d_{g,n} - 15}(\Mb_{g,n}),
\end{equation}
vanishes.
When we view \eqref{gm} as a map of Hodge structures, we see that it vanishes since we have already proved Theorem \ref{thm:131415} in the category of Hodge structures. Thus, \eqref{gm} vanishes as a map of $\qq$-vector spaces, and so also as a map of Galois representations.
\qed

\bibliographystyle{amsplain}
\bibliography{refs}
\end{document}